\numberwithin{equation}{section}
\newcommand{\x} {\tilde{x}}
\theoremstyle{plain}
\newtheorem{maintheorem}{Theorem}
\newtheorem{theorem}{Theorem}[section]
\newtheorem{cor}[theorem]{Corollary}
\newtheorem{proposition}[theorem]{Proposition}
\newtheorem{lemma}[theorem]{Lemma}
\newtheorem{definition}[theorem]{Definition}
\theoremstyle{remark}
\newtheorem{remark}[theorem]{Remark}
\begin{document}

\thanks{The author thanks Sustech for the work environment and financial support.}

\author[F. Micena]{Fernando Micena}
\address{Instituto de Matem\'{a}tica e Computa\c{c}\~{a}o,
  IMC-UNIFEI, Itajub\'{a}-MG, Brazil.}
\email{fpmicena82@unifei.edu.br}


\renewcommand{\subjclassname}{\textup{2000} Mathematics Subject Classification}

\date{\today}

\setcounter{tocdepth}{2}

\title{On Measurable Properties of Anosov Endomorphisms of Torus}
\maketitle
\begin{abstract} We found a dichotomy involving the unstable Lyapunov exponent of a special Anosov endomorphism of the torus induced by the conjugacy with the linearization.
In fact, either every unstable leaf meets on a set of zero measure the set for which is defined such unstable Lyapunov exponent or the endomorphism is smoothly conjugated with its linearization. Also, we are able to characterize the absolute continuity of the intermediate foliation for a class of volume preserving special Anosov endomorphisms of $\mathbb{T}^3.$
\end{abstract}

\section{Introduction and Statement of the Results}\label{section.preliminaries}

In $1970s,$ the works \cite{PRZ} and \cite{MP}  generalized the notion of Anosov diffeomorphism for non-invertible maps, introducing the notion of Anosov endomorphism. We consider $M$ a $C^{\infty}-$closed Riemannian manifold.

\begin{definition}\cite{PRZ} \label{defprz} Let $f: M \rightarrow M$ be a  $C^1$ local diffeomorphism. We say that $f$ is an Anosov endomorphism if there are constants $C> 0$ and $\lambda > 1,$ such that, for every $(x_n)_{n \in \mathbb{Z}}$ an $f-$orbit there is a splitting

$$T_{x_i} M = E^s_{x_i} \oplus E^u_{x_i}, \forall i \in \mathbb{Z},$$

which is preserved by $Df$ and for all $n > 0 $ we obtain

$$||Df^n(x_i) \cdot v|| \geq C^{-1} \lambda^n ||v||, \;\mbox{for every}\; v \in E^u_{x_i} \;\mbox{and for any} \; i \in \mathbb{Z},$$
$$||Df^n(x_i) \cdot v|| \leq C\lambda^{-n} ||v||, \;\mbox{for every}\; v \in E^s_{x_i} \;\mbox{and for any} \; i \in \mathbb{Z}.$$

\end{definition}

We denote by $M^f$ the space of all $f-$orbits $\x= (x_n)_{n \in \mathbb{Z}},$ endowed with me metric $$\bar{d}(\tilde{x}, \tilde{y}) =  \sum_{i \in \mathbb{Z}} \frac{d(x_i, y_i)}{2^{|i|}},$$ where $d$ denotes the Riemannian metric on $M$ and $\x= (x_n)_{n \in \mathbb{Z}}, \tilde{y}= (y_n)_{n \in \mathbb{Z}},$ two $f-$orbits. We denote by $p: M^f \rightarrow M,$ the natural projection $$p((x_n)_{n \in \mathbb{Z}}) = x_0.$$

The space $(M^f, \bar{d})$ is compact, moreover $f$ induces a continuous map $\tilde{f}: M^f \rightarrow M^f,$ given by the shift
$$\tilde{f}((x_n)_{n \in \mathbb{Z}}) = (x_{n+1})_{n \in \mathbb{Z}}. $$

Anosov endomorphisms can be defined in an equivalent way (\cite{MP}).

\begin{definition}\cite{MP} \label{defmp} A $C^1$ local diffeomorphism $f: M \rightarrow M$ is said an Anosov endomorphism if $Df$ contracts uniformly a $Df-$invariant and continuous sub-bundle $E^s \subset TM$ into itself and the action of $Df$ on the quotient $TM/E^s$ is uniformly expanding.
\end{definition}

\begin{proposition}[\cite{MP}] A local diffeomorphism $f: M \rightarrow M$ is an Anosov endomorphism of $M$ if and only if the lift $\overline{f}: \overline{M} \rightarrow \overline{M}$ is an Anosov diffeomorphism of $\overline{M},$ the universal cover of $M.$
\end{proposition}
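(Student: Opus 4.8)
\emph{Proof strategy.} The plan is to transfer the hyperbolic structure across the universal covering $\pi\colon\overline{M}\to\overline{M}$ (I write $\overline M$ for the universal cover), which I equip with the Riemannian metric pulled back from $M$ so that $\pi$ becomes a local isometry. With this choice every contraction/expansion estimate has the same constants $C,\lambda$ on $\overline M$ as on $M$, and compactness of $M$ guarantees these constants are uniform even though $\overline M$ is noncompact; ``Anosov diffeomorphism of $\overline M$'' is understood in this uniform sense. Before any dynamics, the first and purely topological step is to check that the lift $\overline f$ is genuinely a diffeomorphism.

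For that step, note that $f$ is a local diffeomorphism of the compact manifold $M$, hence proper, with open and closed (thus full) image, so $f$ is a surjective covering map of finite degree. Then $f\circ\pi\colon\overline M\to M$ is again a covering map, and since $\overline M$ is simply connected it is a universal covering of $M$. Because $\overline f$ satisfies $\pi\circ\overline f=f\circ\pi$, it is a morphism between the two universal coverings $\pi$ and $f\circ\pi$ of $M$, hence an isomorphism, i.e.\ a diffeomorphism. Along the way I would record, for every deck transformation $\gamma$, the relation $\overline f\circ\gamma=\rho(\gamma)\circ\overline f$, where $\rho$ is the endomorphism of the deck group induced by $f$; $\rho$ is injective with image of finite index (equal to the degree of $f$), and iterating gives $\overline f^{\,n}\circ\gamma=\rho^{n}(\gamma)\circ\overline f^{\,n}$. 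This relation is the technical heart of the equivariance arguments below.

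For the direction ``$f$ Anosov endomorphism $\Rightarrow$ $\overline f$ Anosov diffeomorphism'', I would use Definition \ref{defprz}. Since $\overline f$ is a diffeomorphism, every $\overline x\in\overline M$ has a unique bi-infinite orbit, whose $\pi$-projection is an $f$-orbit in $M$; pulling back the orbit splitting $E^s\oplus E^u$ along it by the isometry $D\pi$ produces a splitting $T\overline M=\overline E^{s}\oplus\overline E^{u}$. Its $D\overline f$-invariance and the uniform estimates transfer verbatim because $\pi$ is a local isometry, while continuity follows from continuity of the splitting over the compact orbit space $M^{f}$ composed with the continuous assignment $\overline x\mapsto$ its orbit. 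Hence $\overline f$ is Anosov. Conversely, assuming $\overline f$ Anosov with its unique continuous splitting $\overline E^{s}\oplus\overline E^{u}$, I would use uniqueness together with $\overline f^{\,n}\circ\gamma=\rho^{n}(\gamma)\circ\overline f^{\,n}$ and the fact that each $\rho^{n}(\gamma)$ is an isometry to compute $\|D\overline f^{\,n}(\gamma\overline x)\,D\gamma(v)\|=\|D\overline f^{\,n}(\overline x)\,v\|$, which forces $D\gamma(\overline E^{s}_{\overline x})=\overline E^{s}_{\gamma\overline x}$. Thus $\overline E^{s}$ is deck-invariant and descends to a continuous $Df$-invariant, uniformly contracted bundle $E^{s}\subset TM$, and the expansion of $\overline E^{u}$ descends to uniform expansion on the quotient $TM/E^{s}$; by Definition \ref{defmp}, $f$ is an Anosov endomorphism.

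The main obstacle, and the conceptual point of the whole statement, is precisely the asymmetry in this last step. The same descent argument \emph{fails} for $\overline E^{u}$: pushing the backward iterates $\overline f^{\,-n}$ past $\gamma$ would require $\rho^{-n}(\gamma)$, which exists as a deck transformation only for $\gamma$ in the finite-index image of $\rho^{n}$, whereas $\rho$ is not surjective when $f$ is noninvertible. Consequently $\overline E^{u}$ need not be deck-invariant and need not descend to a globally defined unstable bundle on $M$. This is exactly why Definition \ref{defmp}, which demands hyperbolicity only through $E^{s}$ and the induced action on $TM/E^{s}$, is the correct invertibility-free reformulation, while Definition \ref{defprz} is forced to retain the orbit dependence of $E^{u}$; verifying this equivalence of the two definitions, rather than the covering-space bookkeeping, is where the care is needed.
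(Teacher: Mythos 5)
The paper does not actually prove this proposition: it is imported verbatim from \cite{MP} (and the surrounding discussion leans on \cite{SA} for the equivalence of Definitions \ref{defprz} and \ref{defmp}), so there is no in-paper argument to compare yours against. On its own terms your outline is the standard covering-space argument and its main steps are sound: the identification of $\overline f$ as an isomorphism between the two universal coverings $\pi$ and $f\circ\pi$ (hence a diffeomorphism), the transfer of the orbitwise splitting upstairs via the local isometry $\pi$, and, for the converse, the deck-equivariance relation $\overline f\circ\gamma=\rho(\gamma)\circ\overline f$ together with the characterization of $\overline E^{s}$ as the vectors with bounded forward orbit to show that only the stable bundle is deck-invariant and descends. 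Your closing remark about why $\overline E^{u}$ does \emph{not} descend is exactly the right conceptual point. (Minor typo: the covering should be $\pi\colon\overline M\to M$, not $\overline M\to\overline M$.)

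The one substantive issue is the definitional asymmetry you yourself flag: your forward implication starts from Definition \ref{defprz} and your backward implication lands in Definition \ref{defmp}, so as written the biconditional for a \emph{fixed} notion of ``Anosov endomorphism'' closes up only modulo Sakai's theorem that the two definitions agree, which you take as an external input rather than prove. That is defensible in the context of this paper, which also cites \cite{SA}, but it is not self-contained, and it is anachronistic as a reconstruction of \cite{MP}. To make the proof self-contained in Definition \ref{defmp} (the definition under which Ma\~n\'e--Pugh state the result), the forward direction should instead lift the globally defined bundle $E^{s}$ to $\overline E^{s}=D\pi^{-1}E^{s}$ --- which is immediate since $E^{s}$ lives on $M$ --- and then manufacture $\overline E^{u}$ on the \emph{invertible} lift by a cone-field or graph-transform argument driven by the uniform expansion on the quotient $T\overline M/\overline E^{s}$, e.g.\ $\overline E^{u}_{\overline x}=\bigcap_{n\ge 0}D\overline f^{\,n}\bigl(\mathcal C^{u}_{\overline f^{\,-n}(\overline x)}\bigr)$. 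Combined with your descent argument this yields the clean equivalence for Definition \ref{defmp}, and then running your Definition-\ref{defprz} direction through the lift is one way to recover Sakai's equivalence rather than to assume it.
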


Sakai, in \cite{SA} proved that, in fact, the definitions $\ref{defprz}$ and $\ref{defmp}$ are equivalent.  An advantage to work with the definition given in \cite{MP} is that in $\overline{M}$ there are invariant foliations $\mathcal{F}^s_{\overline{f}}$ and $\mathcal{F}^u_{\overline{f}}.$  So in the universal cover we can use good properties of these foliations as absolute continuity and quasi-isometry, for instance.

Let $f: M  \rightarrow M$ be a $C^r-$Anosov endomorphism with $r \geq 1,$ it is known that $E^s_f(\tilde{x})$ and $E^u_f(\tilde{x})$ admit uniform size local $C^r-$tangent sub manifolds $W^s_f(\tilde{x})$ and $W^u_f(\tilde{x}).$
\begin{enumerate}
\item $W^s_f(x) = \{y \in M \;|  \displaystyle\lim_{n \rightarrow +\infty} d(f^n(x), f^n(y)) = 0\},$
\item $W^u_f(\tilde{x}) = \{y \in M \;| \exists \tilde{y} \in M^f \; \mbox{such that}\; y_0 = y \; \mbox{and} \;  \displaystyle\lim_{n \rightarrow +\infty} d(x_{-n}, y_{-n}) = 0\}.$
\end{enumerate}

The leaves $W^s_f(\tilde{x})$ and $W^u_f(\tilde{x})$  vary  $C^1-$continuously with $\tilde{x},$ see Theorem 2.5 of \cite{PRZ}.

It is known by \cite{PRZ} and \cite{MP} that structural stability can fail for Anosov endomorphisms. It is because unstable directions depend on the past orbits of a point. For Anosov (non-invertible) endomorphisms, a given point $x$ can have uncountable past orbits, and so it has uncountable unstable manifolds passing by $x,$ see \cite{PRZ}. When an Anosov endomorphism is such that for any point is defined a unique unstable direction, independently of its past orbits, such endomorphism is called special Anosov endomorphism.

Here our focus is Anosov endomorphisms of the torus. Given $f$ and Anosov endomorphism of torus, we consider $A = f_{\ast}: \mathbb{Z}^2 \rightarrow \mathbb{Z}^2$ the action of $f$ on $\pi_1(\mathbb{T}^d) \cong \mathbb{Z}^2. $ The linearization $A$ is given by a matrix of integer entries, moreover, it is known by \cite{AH} that $A$ induces a linear Anosov automorphism on $\mathbb{T}^d.$

From now on, in this work, we are ever considering Anosov endomorphisms $f$ such that its linearization $A$ is irreducible over $\mathbb{Q},$ meaning that, its characteristic polynomial is irreducible over $\mathbb{Q}[x].$ We also require that for $f$ the stable and unstable directions are non-trivial. The term endomorphism is ever used in the non-invertible setting.

\begin{theorem}\cite{MoTa}
An Anosov endomorphism $f:\mathbb{T}^n \rightarrow \mathbb{T}^n$ is conjugated with its linearization if and only if $f$ is special.
\end{theorem}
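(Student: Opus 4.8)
The plan is to produce a semi-conjugacy directly from the cohomological data of the lift, working entirely on the universal cover, and then to let the special hypothesis govern whether that semi-conjugacy is injective; the reverse implication I would instead extract from the way any conjugacy is forced to carry the unstable manifolds of $f$ into the unique unstable leaves of $A$.

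First I would set up the semi-conjugacy (this is what feeds the direction ``special $\Rightarrow$ conjugate''). Let $\bar f:\mathbb{R}^n\to\mathbb{R}^n$ be the lift, which is an Anosov diffeomorphism of $\mathbb{R}^n$ by the Proposition taken from \cite{MP}, and write $\bar f=A+\phi$ with $\phi$ bounded and $\mathbb{Z}^n$-periodic. I look for $\bar h=\mathrm{id}+g$ with $g$ bounded solving $g\circ\bar f-A\,g=-\phi$. Splitting $\mathbb{R}^n=E^s_A\oplus E^u_A$ and summing the two geometric series coming from the contraction of $A^{-1}$ on $E^u_A$ and of $A$ on $E^s_A$ (here one uses that $\bar f$ is invertible on $\mathbb{R}^n$), I obtain an explicit bounded continuous $g$; because $\phi(\bar f^{k}x)$ is invariant under $x\mapsto x+m$ for $m\in\mathbb{Z}^n$ (as $\bar f(x+m)=\bar f(x)+Am$ and $A$ is integral), $g$ is $\mathbb{Z}^n$-periodic, so $\bar h$ descends to a continuous $h:\mathbb{T}^n\to\mathbb{T}^n$, homotopic to the identity, with $h\circ f=A\circ h$. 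This step is routine and works for \emph{every} Anosov endomorphism, so the entire content of the theorem is the injectivity of $h$.

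For the implication ``special $\Rightarrow$ conjugate'', assume $f$ special. Then $E^u_f$ is well defined on $\mathbb{T}^n$, so both $W^s_f$ and $W^u_f$ are genuine continuous foliations of $\mathbb{T}^n$; lifting, the two invariant foliations of $\bar f$ are $\mathbb{Z}^n$-periodic (the stable one is periodic for any endomorphism, the unstable one only under the special hypothesis). I would then invoke quasi-isometry of these periodic foliations in $\mathbb{R}^n$ to produce a global product structure. With that in hand injectivity is standard: if $\bar h(x)=\bar h(y)$ then $\bar h(\bar f^nx)=A^n\bar h(x)=\bar h(\bar f^ny)$ for all $n\in\mathbb{Z}$, so $\|\bar f^nx-\bar f^ny\|\le 2\sup\|g\|$ stays bounded along the whole two-sided orbit, and the product structure forces both the stable and unstable coordinates of $y$ relative to $x$ to vanish, giving $x=y$. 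Hence $\bar h$, and therefore $h$, is a homeomorphism and $f$ is conjugate to $A$. I expect the quasi-isometry of the unstable foliation to be the genuine obstacle here, since it is exactly the property that fails in the non-special examples.

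For the converse, assume $h\circ f=A\circ h$ with $h$ a homeomorphism. Since $A$ is linear and hyperbolic it is automatically special, with unstable leaf through $w$ equal to $w+E^u_A\pmod{\mathbb{Z}^n}$, independent of the backward orbit. Fix $x$ and a past $\tilde x=(x_n)$; for $y\in W^u_f(\tilde x)$ with witnessing backward orbit $\tilde y$, the sequences $(h(x_{-n}))$ and $(h(y_{-n}))$ are backward $A$-orbits of $h(x)$ and $h(y)$ with $d(h(x_{-n}),h(y_{-n}))\to 0$ by uniform continuity of $h$, so $h(y)\in W^u_A(h(x))$. Thus $h\big(W^u_f(\tilde x)\big)\subseteq W^u_A(h(x))$ for every past, i.e. every unstable manifold of $f$ through $x$ lies in the single topological leaf $\mathcal{G}(x):=h^{-1}\big(W^u_A(h(x))\big)$. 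Since $\dim W^u_f=\dim W^u_A$ and each $W^u_f(\tilde x)$ is a $C^1$ submanifold of that dimension inside the topological manifold $\mathcal{G}(x)$, invariance of domain shows each $W^u_f(\tilde x)$ is open in $\mathcal{G}(x)$; two such open sets through $x$ must coincide near $x$, forcing $E^u_f(\tilde x)=E^u_f(\tilde x')$ for all pasts, i.e. $f$ is special. The delicate points I would isolate first are the foliation/product-structure lemma underlying the injectivity (the heart of the matter, and precisely where non-special maps break down) and, in the converse, the verification that the merely continuous $h$ still transports the backward-asymptotics defining $W^u_f$ into $W^u_A$; once the product-structure lemma is in place, both implications are short.
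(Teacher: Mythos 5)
This theorem is imported from \cite{MoTa}; the paper contains no proof of it, so there is nothing internal to compare your argument against and I can only judge it on its own terms. Your converse direction (conjugate $\Rightarrow$ special) is essentially sound: a conjugacy transports the backward-asymptotic sets defining $W^u_f(\tilde x)$ into the past-independent leaves $W^u_A(h(x))$ of the linear map, and your invariance-of-domain step is the right way to upgrade ``all unstable manifolds through $x$ lie in one topological leaf'' to equality of the subspaces $E^u_f(\tilde x)$.

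The forward direction, however, has a genuine gap at exactly the step you declare routine: the descent of $\overline{h}=\mathrm{id}+g$ to the torus. Splitting $g=g^s+g^u$ along $E^s_A\oplus E^u_A$, the two convergent series are $g^u(x)=\sum_{k\ge 0}A^{-(k+1)}\phi^u(\overline{f}^k x)$ and $g^s(x)=-\sum_{k\ge 1}A^{k-1}\phi^s(\overline{f}^{-k}x)$. Your periodicity argument --- $\overline{f}^k(x+m)=\overline{f}^k(x)+A^k m$ with $A^k m\in\mathbb{Z}^n$ --- covers only the forward iterates, hence only $g^u$. For $g^s$ one would need $\overline{f}^{-k}(x+m)-\overline{f}^{-k}(x)\in\mathbb{Z}^n$, which holds only when $m\in A^k\mathbb{Z}^n$; since $\deg f=|\det A|\ge 2$, this is a proper finite-index subgroup, and for general $m$ that difference need not be an integer vector. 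So $g^s$ is not $\mathbb{Z}^n$-periodic and $\overline{h}$ does not descend. Indeed it \emph{cannot} ``work for every Anosov endomorphism'': your injectivity argument uses only global product structure and quasi-isometry of the lifted foliations, properties available for non-special Anosov endomorphisms of $\mathbb{T}^2$ as well (cf.\ \cite{HH} and Section 3 of this paper), so if the descent were automatic you would conclude that every Anosov endomorphism is conjugate to its linearization, contradicting the non-special examples of \cite{PRZ}. The special hypothesis must therefore be spent on the descent itself: one can show that $\Phi_m(x):=\overline{h}(x+m)-\overline{h}(x)-m$ satisfies $\Phi_{Am}\circ\overline{f}=A\,\Phi_m$ and is uniformly bounded, which kills its $E^u_A$-component for free, but proving that the remaining $E^s_A$-valued part vanishes is precisely where the $\mathbb{Z}^n$-periodicity of the lifted unstable foliation (i.e.\ specialness) has to enter. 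That argument --- the heart of the theorem --- is missing from your proposal.
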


More recently, some results were obtained related to the existence of smooth conjugacy between a tori Anosov endomorphism and its linearization $A.$ See, for instance, \cite{micendo,CV,shi} . We highlight the result from \cite{shi}.
 \begin{theorem}\label{shi} Let $f: \mathbb{T}^n \rightarrow \mathbb{T}^n$  be an Anosov endomorphism such that $\dim E^s_f = 1$ and $A: \mathbb{T}^n \rightarrow \mathbb{T}^n$ its linearization.  Then $f$ is special, if and only if, $\lambda^s_f(p) = \lambda^s_A, \forall p \in Per(f).$ In this case the conjugacy between $f$ and $A$ is at least $C^{1 + \alpha}$ restricted to stable leaves.\end{theorem}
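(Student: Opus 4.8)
The plan is to run both implications through the canonical topological conjugacy with the linearization and to measure its regularity along the one-dimensional stable foliation. Since the lift $\overline{f}$ to the universal cover is an Anosov diffeomorphism conjugate to the linear map $\overline{A}$, there is a continuous $h\colon\mathbb{T}^n\to\mathbb{T}^n$, homotopic to the identity, with $h\circ f=A\circ h$; this $h$ is unique, and by the theorem of Moriyasu--Tanaka quoted above $f$ is special precisely when $h$ is a homeomorphism. Because $\dim E^s_f=1$ and $E^s_f$ depends only on the base point, $W^s_f$ is a genuine foliation of $\mathbb{T}^n$ with $C^{1+\alpha}$ leaves, and $h$ carries each stable leaf of $f$ into a stable leaf of $A$. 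I will therefore organize the proof as a cycle of implications whose pivot is the regularity of $h$ along these leaves: (special) $\Rightarrow$ ($h$ is $C^1$ on stable leaves) $\Rightarrow$ (periodic data matches) $\Rightarrow$ ($h$ is $C^{1+\alpha}$ on stable leaves) $\Rightarrow$ (special).

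The two analytic arrows are standard once the geometry is in place. Consider the H\"older cocycle $a(x)=\log\lVert Df(x)|_{E^s_x}\rVert$. On the one hand, if $h$ is $C^1$ along the leaf through a periodic point $p$ of period $k$, then it is a $C^1$ conjugacy between the contraction $f^{\,k}|_{W^s_f(p)}$ and the linear contraction $A^{\,k}|_{W^s_A(h(p))}$, so their multipliers at the fixed point agree and hence $\lambda^s_f(p)=\log|\mu_s|=\lambda^s_A$, where $\mu_s$ is the stable eigenvalue of $A$. On the other hand, the hypothesis $\lambda^s_f(p)=\lambda^s_A$ for all $p\in\Per(f)$ says exactly that the Birkhoff sums of $a$ over every periodic orbit coincide with those of the constant $\log|\mu_s|$; Liv\v{s}ic's theorem then produces a H\"older $u$ with $a=\log|\mu_s|+u\circ f-u$, i.e. the stable derivative cocycle is cohomologically trivial. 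Along each one-dimensional leaf this cohomological equation integrates to a $C^{1+\alpha}$ chart linearizing $f|_{W^s_f}$ to $x\mapsto\mu_s x$, which forces $h$ itself to be $C^{1+\alpha}$ on stable leaves and yields the regularity asserted in the statement.

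It remains to close the cycle with the two geometric arrows relating the stable regularity of $h$ to specialness, and this is where the heart of the argument lies. For (special) $\Rightarrow$ ($h$ is $C^1$ on stable leaves) I would use that, once $h$ is a genuine homeomorphism, the unstable holonomies of $f$ are well defined on $\mathbb{T}^n$; transporting the affine structure of the leaves of $A$ through $h$ along these holonomies endows $W^s_f$ with an $f$-invariant $C^1$ projective (affine) structure on each leaf, so that one expects a $C^1$ conjugacy to the linear model to exist. For ($h$ is $C^{1+\alpha}$ on stable leaves) $\Rightarrow$ (special) I would argue by contraposition: if $h$ collapses a nontrivial fibre, that fibre lies in an unstable leaf along which the non-uniqueness of the past is recorded, and the resulting mismatch between the smooth stable linearization and the topological identification $h$ contradicts the $C^{1+\alpha}$ regularity just obtained.

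The main obstacle is exactly these two geometric arrows: the subtlety is that specialness is an assertion about the past-dependent unstable bundle, whereas the hypothesis only constrains the always well-defined stable data, so one must genuinely transfer information across the splitting. I expect the decisive input to be the irreducibility of $A$ over $\mathbb{Q}$, which prevents the stable direction from lying in a proper rational invariant subspace and thereby ties the arithmetic of $\mu_s$ to the global geometry of the leaves of $A$; without it the cohomological triviality of the stable cocycle would not be expected to detect the collapsing fibres of $h$. Making the implication ($C^{1+\alpha}$ stable conjugacy) $\Rightarrow$ (injectivity of $h$) precise, rather than merely plausible, is the step I would budget the most effort for.
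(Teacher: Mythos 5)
This theorem is not proved in the paper: it is quoted verbatim from \cite{shi}, so there is no internal proof to compare against. Judged on its own terms, your proposal has a genuine gap at its foundation.

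Your entire scheme pivots on a torus-level map $h$ with $h\circ f=A\circ h$ that you take to exist unconditionally, to be unique, and to be a homeomorphism exactly when $f$ is special. That is false. Running the Franks fixed-point argument on the lift, one writes $\overline{h}=\mathrm{id}+u$ and solves $u=A^{-1}\phi+A^{-1}(u\circ\overline{f})$ by splitting along $E^s_A\oplus E^u_A$: the unstable component is a series in $\phi^u\circ\overline{f}^{\,n}$ and is $\mathbb{Z}^n$-periodic, but the stable component involves $\phi^s\circ\overline{f}^{\,-n}$, and $\overline{f}^{\,-n}(x+k)-\overline{f}^{\,-n}(x)=A^{-n}k\notin\mathbb{Z}^n$ in general, so $\overline{h}$ does not descend to $\mathbb{T}^n$. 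The existence of a torus-level conjugacy in the direction $h\circ f=A\circ h$ is \emph{equivalent} to specialness --- that is precisely the Moosavi--Tajbakhsh theorem quoted in the paper (note the attribution; there is no ``Moriyasu--Tanaka'' theorem here). Consequently your chain (periodic stable data) $\Rightarrow$ ($h$ is $C^{1+\alpha}$ on stable leaves) $\Rightarrow$ (special) is circular: the object whose leafwise regularity you propose to upgrade exists only once the conclusion is already known. The real content of this direction is a descent statement: working with $\overline{h}$ on $\mathbb{R}^n$ (or on the inverse limit), one must show that the vanishing of the periodic obstruction for the stable derivative cocycle forces $\overline{h}$, equivalently the unstable bundle, to be independent of the choice of past orbit. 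That step is nowhere addressed.

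The other geometric arrow is also not believable as sketched. For Anosov \emph{diffeomorphisms} of $\mathbb{T}^2$, a topological conjugacy to the linearization is typically not $C^1$ along stable leaves and the periodic stable multipliers are not rigid; your proposed argument (transporting the affine structure of the leaves of $A$ through $h$ along unstable holonomies) makes no use of non-invertibility and would ``prove'' that false statement as well. The mechanism in \cite{shi} is essentially non-invertible: a point has $d^n$ preimages under $f^n$, the union of all preimages is dense, $h$ must match them with the $A^{-n}$-preimages whose mutual displacements are arithmetically constrained by $A$ (this is where irreducibility enters), and comparing contraction along the stable leaves through these preimage clouds pins $\lambda^s_f(p)$ to $\lambda^s_A$. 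Your two analytic steps --- reading off the multiplier from a $C^1$ leaf conjugacy at a periodic point, and Livšic plus the affine structure on one-dimensional stable leaves to obtain $C^{1+\alpha}$ regularity once a leaf conjugacy exists --- are correct but routine; the two implications you defer are the theorem.
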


Relying on the results of \cite{shi},  we found a dichotomy involving the unstable Lyapunov exponent of a special Anosov endomorphism of the torus induced by the conjugacy with the linearization. Suppose that  $f: \mathbb{T}^2 \rightarrow  \mathbb{T}^2$ is an Anosov endomorphism and its linearization $A.$ Consider that $f$ and $A$ are conjugated by $h,$ such that $h \circ f = A\circ h.$ From now on we denote by $\widetilde{m} = h^{-1}_{\ast}(m),$ where $m$ is the usual volume form of $\mathbb{T}^2.$ Of course $\widetilde{m}$ is ergodic for $f$ and since $h$ is a homeomorphism $supp(\widetilde{m}) =  \mathbb{T}^2.$ We denote by $\lambda^u_f(\widetilde{m})$ the $\widetilde{m}-$typical unstable Lyapunov exponent of $f.$ We can prove the following.

\begin{maintheorem} \label{teo1} Let   $f: \mathbb{T}^2 \rightarrow  \mathbb{T}^2 $ be a smooth Anosov endomorphism with linearization $A,$ such that $f$ and $A$ are conjugated by $h,$ for which $h \circ f = A \circ h.$ Consider $ Z = \{x \in \mathbb{T}^2| \; \lambda^u_f(x) = \lambda^u_f(\widetilde{m})  \},$ then
either $Z$ meets every unstable leaf in a Lebesgue null set of the leaf, or $Z = \mathbb{T}^2$ and in this last case $f$ and $A$ are smoothly conjugated.
\end{maintheorem}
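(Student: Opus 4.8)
The plan is to compare the conditional measures of $\widetilde m$ along the unstable foliation $W^u_f$ (which exists because $f$, being conjugate to $A$, is special by \cite{MoTa}) with the Lebesgue measure induced on the leaves, and to read the dichotomy off from whether these two families are mutually singular or equivalent. Since $\widetilde m$ is ergodic, Birkhoff's theorem gives $\lambda^u_f(x)=\lambda^u_f(\widetilde m)$ for $\widetilde m$-a.e.\ $x$, so $\widetilde m(Z)=1$; the whole point is to convert this full-measure statement into one about \emph{Lebesgue} measure on \emph{every} leaf. The first structural input is that $\widetilde m=h^{-1}_\ast m$ is a measure of maximal entropy of $f$: the topological conjugacy $h$ carries $(f,\widetilde m)$ to $(A,m)$, entropy is a conjugacy invariant, and Lebesgue $m$ realizes the maximal entropy of the linear model, so $h_{\widetilde m}(f)=h_{m}(A)=\log\mu=\lambda^u_A$, where $\mu$ is the expanding eigenvalue of $A$. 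Ruelle's inequality applied to $(f,\widetilde m)$ then yields $\lambda^u_f(\widetilde m)\ge h_{\widetilde m}(f)=\lambda^u_A$, and the two alternatives of the theorem will correspond exactly to $\lambda^u_f(\widetilde m)>\lambda^u_A$ and $\lambda^u_f(\widetilde m)=\lambda^u_A$.

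Assume first $\lambda^u_f(\widetilde m)>\lambda^u_A$. Fix any unstable leaf $L$ and identify $h|_L$, in arc-length coordinates, with the monotone distribution function of the conditional measure $\widetilde m^u_L$, using that $m$ disintegrates into Lebesgue along the linear unstable lines of $A$. The functional equation $\phi_{f(L)}\circ f=\mu\,\phi_L$, the coordinate form of $h\circ f=A\circ h$ on leaves, permits a magnification argument: for $x\in L$ and small $r$ choose $n=n(x,r)$ so that $f^n$ carries $B_r(x)\subset L$ to a leaf-interval of unit size. Bounded distortion of $f$ along $W^u_f$ (valid since $f$ is $C^{1+\alpha}$) gives $r\asymp e^{-n\lambda^u_f(x)}$, while the functional equation gives $\widetilde m^u_L(B_r(x))=\mu^{-n}\,\widetilde m^u_{f^n(L)}(f^nB_r(x))\asymp e^{-n\lambda^u_A}$, the last factor being uniformly controlled by compactness. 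Hence the upper density of $\widetilde m^u_L$ with respect to $\mathrm{Leb}_L$ at $x$ behaves like $e^{\,n(\lambda^u_f(x)-\lambda^u_A)}$. For every $x\in Z\cap L$ one has $\lambda^u_f(x)=\lambda^u_f(\widetilde m)>\lambda^u_A$, so this upper density is $+\infty$; by the Lebesgue--Besicovitch differentiation theorem the set where the upper derivative of a Radon measure is infinite is Lebesgue-null. Therefore $\mathrm{Leb}_L(Z\cap L)=0$ for \emph{every} leaf $L$, which is the first alternative. I expect the uniformity of the distortion and of the unit-scale mass across all leaves, so that the conclusion holds for every leaf rather than almost every leaf, to be the main technical point here.

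Assume now $\lambda^u_f(\widetilde m)=\lambda^u_A$, i.e.\ equality in Ruelle's inequality. By the Ledrappier--Pesin characterization (in its endomorphism version) this forces $\widetilde m$ to be an SRB measure, so $\widetilde m^u_L\ll\mathrm{Leb}_L$ with a positive density $\rho$. Now $\widetilde m$ is simultaneously a measure of maximal entropy, whose unstable conditionals are Margulis measures scaling by the exact topological rate $\mu=e^{\lambda^u_A}$ under the leaf dynamics, and an SRB measure, whose Lebesgue-based conditionals scale by the geometric Jacobian $\|Df|_{E^u_f}\|$. Equating the two scalings through $\rho$ produces the cohomological equation $\log\rho\circ f-\log\rho=\log\mu-\log\|Df|_{E^u_f}\|$, exhibiting $\log\|Df|_{E^u_f}\|$ as cohomologous to the constant $\lambda^u_A$; summing around any periodic orbit (the coboundary telescopes, once $\rho$ is upgraded to a continuous density by a Livšic-type argument) gives $\lambda^u_f(p)=\lambda^u_A$ for every $p\in\Per(f)$.

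At this stage the unstable periodic data of $f$ coincide with those of $A$, and since $f$ is special, Theorem \ref{shi} supplies the matching of the stable periodic data, $\lambda^s_f(p)=\lambda^s_A$ for all $p\in\Per(f)$, together with $C^{1+\alpha}$ regularity of $h$ along stable leaves. Matching of the full periodic Lyapunov data on the surface then upgrades $h$ to a $C^\infty$ conjugacy by the two-dimensional rigidity theorem, in its version for special Anosov endomorphisms. Finally, once $h$ is a $C^1$ diffeomorphism, the relation $Df^n(x)=Dh^{-1}(A^nh(x))\,DA^n\,Dh(x)$ shows $\tfrac1n\log\|Df^n|_{E^u_f(x)}\|\to\log\mu=\lambda^u_A$ for \emph{every} $x$, because the $Dh^{\pm1}$ factors are uniformly bounded; hence $\lambda^u_f(x)=\lambda^u_A=\lambda^u_f(\widetilde m)$ for all $x$, that is $Z=\mathbb{T}^2$, the second alternative. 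The delicate steps I anticipate are the bounded-distortion density estimate of the strict case and the passage from the SRB $=$ MME identity to a \emph{continuous} solution of the cohomological equation enabling the Livšic conclusion; pinning down the correct smooth-rigidity statement in the non-invertible setting is the remaining point to secure.
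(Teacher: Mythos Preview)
Your proposal is correct and shares the same architecture as the paper's proof: both reduce the dichotomy to the alternative $\lambda^u_f(\widetilde m)>\lambda^u_A$ versus $\lambda^u_f(\widetilde m)=\lambda^u_A$ via Ruelle's inequality, and in the equality case both pass through the SRB property of $\widetilde m$, regularity of $h$ along unstable leaves, matching of periodic data (with Theorem~\ref{shi} handling the stable side), and finally the smooth-rigidity theorem of \cite{micendo}.

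The genuine difference lies in how you treat the strict case. The paper lifts to $\mathbb{R}^2$, establishes quasi-isometry of $\mathcal F^u_{\overline f}$, and derives a contradiction by estimating $\|\overline f^n x-\overline f^n y\|$ for a long leaf segment $[x,y]_u$ in two ways: an upper bound of order $e^{n\lambda^u_A}$ coming from comparison with $A^n$ at large scale, and a lower bound of order $e^{n\lambda^u_f(\widetilde m)}$ coming from integrating $\|Df^n\|$ over $[x,y]_u\cap Z_n$. Your argument instead stays on the torus and is small-scale: you observe that the leafwise conditional $\widetilde m^u_L$ is literally the $h$-pullback of arc length on the linear line $h(L)$, use the exact scaling $\widetilde m^u_L(B_r(x))=\mu^{-n}\,\widetilde m^u_{f^nL}(f^nB_r(x))$ together with bounded distortion to show that the upper Lebesgue derivative of $\widetilde m^u_L$ is $+\infty$ at every $x\in Z\cap L$, and conclude $\mathrm{Leb}_L(Z\cap L)=0$ by the differentiation theorem. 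This bypasses the quasi-isometry lemmas entirely; the only uniform input you need is a two-sided bound on $|h(I)|$ for unit leaf arcs $I$, which follows from bi-H\"older regularity of the conjugacy. Conversely, the paper's route makes the geometric mechanism (growth of leaf length versus linear growth) very explicit and reuses the same covering-space estimates later in the $\mathbb T^3$ theorem.

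In the equality case your cohomological rephrasing (equating the Margulis scaling $\mu$ with the SRB Jacobian $J^uf$ through the density $\rho$) and the paper's ODE $x'=\Delta^u_f/\Delta^u_A$ are two ways of saying the same thing: both amount to $(h|_L)'=\rho^u_f$, whence $h$ is $C^\infty$ along unstable leaves once one knows $\rho^u_f$ is smooth. The only point to be careful about in your version is that for a non-invertible $f$ the SRB density is a priori defined on the inverse limit; the paper handles this by working with increasing partitions subordinate to $W^u$ and the explicit formula~\eqref{conditionalU}, which in the special case descends to the torus. Once that is secured, your Liv\v{s}ic conclusion and the paper's direct leafwise regularity are equivalent.
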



\begin{maintheorem}\label{teo2} Let $f: \mathbb{T}^2 \rightarrow \mathbb{T}^2 $ be a $C^{\infty}$ an Anosov endomorphism. Suppose that for any periodic points $p,q$ hold $\lambda^s_f(p) = \lambda^s_f(q)$ and $\lambda^u_f(p) = \lambda^u_f(q),$ then $f$ and its linearization $A$ are smoothly conjugated.
\end{maintheorem}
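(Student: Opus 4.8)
The plan is to deduce the statement by showing that the (by hypothesis constant) periodic data of $f$ must coincide with that of the linearization, i.e. that the common stable value $c_s:=\lambda^s_f(p)$ equals $\lambda^s_A$ and the common unstable value $c_u:=\lambda^u_f(p)$ equals $\lambda^u_A=\log|\mu_u|$, and then to feed this into Theorem \ref{shi} and Theorem \ref{teo1}. The first thing I would record is that, since $\dim E^s_f=\dim E^u_f=1$ and the periodic exponents are constant, Liv\v{s}ic's theorem applied to the hyperbolic homeomorphism $\tilde f$ on the inverse limit $M^f$ (with $M=\mathbb{T}^2$) shows that the H\"older cocycles $\log\|Df|_{E^s}\|$ and $\log\|Df|_{E^u}\|$ are cohomologous to the constants $c_s$ and $c_u$ via \emph{continuous} transfer functions. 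Consequently $\int\lambda^s\,d\mu=c_s$ and $\int\lambda^u\,d\mu=c_u$ for \emph{every} $f$-invariant measure $\mu$, and moreover $\tfrac1n\log\|Df^n|_{E^u}\|\to c_u$ at every point.

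For the unstable exponent I would argue through entropy. As $f$ is homotopic to $A$ it admits a semiconjugacy onto $A$ whose fibers carry zero topological entropy, so by Bowen's inequality $h_{top}(f)=h_{top}(A)=\lambda^u_A$. The SRB measure $\mu^+$ of the $C^\infty$ Anosov endomorphism satisfies Pesin's formula $h_{\mu^+}(f)=\int\lambda^u\,d\mu^+=c_u$, whence $c_u=h_{\mu^+}(f)\le h_{top}(f)=\lambda^u_A$; conversely Ruelle's inequality $h_\mu(f)\le\int\lambda^u\,d\mu=c_u$ together with the variational principle gives $\lambda^u_A=h_{top}(f)\le c_u$. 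Therefore $c_u=\lambda^u_A$.

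The harder half is the stable exponent, which I would obtain from the identity $c_u+c_s=\log|\det A|$; combined with $c_u=\lambda^u_A$ and $\lambda^u_A+\lambda^s_A=\log|\det A|$ this forces $c_s=\lambda^s_A$. Since $\log|\det Df|=\lambda^u+\lambda^s$ is cohomologous to the constant $c_u+c_s$, it is enough to squeeze $\int\log|\det Df|\,d\mu$ between $\log|\det A|$ from both sides. The upper estimate comes from the degree formula $\int_{\mathbb{T}^2}|\det Df^n|\,dm=|\det A|^n$ and Jensen's inequality: passing to the limit via the physical measure gives $c_u+c_s\le\log|\det A|$. The lower estimate is the main obstacle of the proof. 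Here I would apply the arithmetic--geometric mean inequality to the $|\det A|^n$ preimages $y$ of a point $x$ under $f^n$, using that the transfer-operator sums $\sum_{f^ny=x}|\det Df^n(y)|^{-1}$ remain sub-exponential (indeed bounded away from $0$ and $\infty$, by the spectral theory of the SRB transfer operator); this yields $\frac{1}{|\det A|^n}\sum_{f^ny=x}\frac1n\log|\det Df^n(y)|\ge\log|\det A|-o(1)$, and since the normalized counting measures on preimages are asymptotically $f$-invariant, the left-hand side converges to $\int\log|\det Df|\,d\mu=c_u+c_s$. This delivers $c_u+c_s\ge\log|\det A|$, and hence the desired equality.

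With $\lambda^s_f(p)=c_s=\lambda^s_A$ at every periodic point, Theorem \ref{shi} shows that $f$ is special, so by \cite{MoTa} the semiconjugacy $h$ satisfying $h\circ f=A\circ h$ is a homeomorphism and $\widetilde{m}=h^{-1}_{\ast}(m)$ is well defined. Specialness makes $E^u_f$ a genuine continuous subbundle on $\mathbb{T}^2$, and continuity of the Liv\v{s}ic transfer function then gives $\lambda^u_f(x)=c_u=\lambda^u_f(\widetilde{m})$ for \emph{every} $x\in\mathbb{T}^2$, so that $Z=\mathbb{T}^2$. The dichotomy of Theorem \ref{teo1} therefore excludes the first alternative and forces $f$ and $A$ to be smoothly conjugate, which completes the proof.
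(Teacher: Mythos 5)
Your overall architecture is sound and ends exactly where the paper does (specialness via Theorem \ref{shi}, Liv\v{s}ic to get $Z=\mathbb{T}^2$, then the dichotomy of Theorem \ref{teo1}), but the route you take to get there is genuinely different and much longer than the paper's. The paper's proof is three lines: it cites Theorem 5.1 of \cite{shi}, which already asserts that \emph{constant} stable periodic data (not a priori equal to $\lambda^s_A$) forces $f$ to be special, and then applies Liv\v{s}ic and Theorem \ref{teo1}. You instead work only from the version of Theorem \ref{shi} quoted in the statement (which requires $\lambda^s_f(p)=\lambda^s_A$), and therefore reprove the periodic-data rigidity from scratch: $c_u=\lambda^u_A$ by squeezing between Ruelle's inequality, the variational principle, Pesin's formula for the SRB measure, and $h_{top}(f)=h_{top}(A)=\lambda^u_A$; then $c_s=\lambda^s_A$ via $c_u+c_s=\log|\det A|$ and the degree identity $\int|\det Df^n|\,dm=|\det A|^n$. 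This buys a self-contained argument at the cost of redoing work that the cited reference already contains; the entropy step and the Jensen upper bound $c_u+c_s\le\log|\det A|$ are correct as written.

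The one step I would not accept as justified is the lower bound $c_u+c_s\ge\log|\det A|$. You assert that $\mathcal{L}^n1(x)=\sum_{f^ny=x}|\det Df^n(y)|^{-1}$ is ``bounded away from $0$ and $\infty$ by the spectral theory of the SRB transfer operator.'' That is false in general: $\mathcal{L}^n1$ is the density of $f^n_{*}m$ with respect to $m$, and $f^n_{*}m$ converges to the forward SRB measure, which is typically singular; uniform two-sided bounds on $\mathcal{L}^n1$ would force absolute continuity of the SRB measure, which is essentially the conclusion of the theorem and hence circular. The step is nevertheless salvageable without any spectral input: since $\log\|Df\vert_{E^s}\|$ and $\log\|Df\vert_{E^u}\|$ are cohomologous to the constants $c_s,c_u$ via \emph{bounded} transfer functions and the angle between the bundles is bounded below, $\frac1n\log|\det Df^n(y)|\to c_u+c_s$ \emph{uniformly} in $y$; hence $\mathcal{L}^n1(x)=e^{n(\log|\det A|-(c_u+c_s))+o(n)}$ uniformly in $x$, and the identity $\int\mathcal{L}^n1\,dm=1$ (so that $\min_x\mathcal{L}^n1(x)\le1\le\max_x\mathcal{L}^n1(x)$) forces $c_u+c_s=\log|\det A|$ directly, making both the AM--GM step and the Jensen step unnecessary. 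With that repair your proof is complete.
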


An immediate consequence of the previous Theorem is.

\begin{cor}\label{cor} Let $f: \mathbb{T}^2 \rightarrow \mathbb{T}^2 $ be a $C^{\infty}$ an Anosov endomorphism. Suppose that for any $x \in \mathbb{T}^2$ are defined the Lyapunov exponents $\lambda^s_f(x)$ and $\lambda^u_f(x),$ then $f$ and its linearization $A$ are smoothly conjugated.
\end{cor}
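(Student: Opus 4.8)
The plan is to deduce the corollary directly from Theorem~\ref{teo2}: I will show that the everywhere-existence of the Lyapunov exponents forces all periodic points to share a common stable exponent and a common unstable exponent, which is exactly the hypothesis of Theorem~\ref{teo2}, and that theorem then supplies the smooth conjugacy. The first step is to encode both exponents as Birkhoff averages of genuinely \emph{continuous} observables on $\mathbb{T}^2$. Since $E^s_f$ is determined by the forward orbit alone (it does not depend on the past), the map $\phi^s(x)=\log\|Df(x)|_{E^s_x}\|$ is well defined and continuous, and $\lambda^s_f(x)=\lim_{n\to\infty}\frac1n\sum_{k=0}^{n-1}\phi^s(f^k(x))$ whenever the limit exists. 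Likewise the linear map $\overline{Df}(x)$ induced by $Df(x)$ on the one–dimensional quotient $T_x\mathbb{T}^2/E^s_x$ is forward-determined, so $\phi^u(x)=\log\|\overline{Df}(x)\|$ is continuous and its Birkhoff averages compute $\lambda^u_f(x)$. Thus the hypothesis of the corollary says precisely that, for $g=\phi^s$ and for $g=\phi^u$, the averages $\frac1n S_n g(x)$ converge at every $x\in\mathbb{T}^2$, where $S_n g(x)=\sum_{k=0}^{n-1}g(f^k(x))$.

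The core step is the following elementary dichotomy for a continuous observable $g$ of a transitive Anosov endomorphism: if $\int g\,d\mu_1\neq\int g\,d\mu_2$ for two $f$-invariant measures $\mu_1,\mu_2$, then there is a point at which the Birkhoff average of $g$ fails to converge. I would prove the contrapositive by a concatenation-and-shadowing construction. Taking $\mu_1=\mu_p$ and $\mu_2=\mu_q$ to be the uniform measures on two periodic orbits with $\int g\,d\mu_p\neq\int g\,d\mu_q$, I form a pseudo-orbit that follows the orbit of $p$ for a block of length $N_1$, then the orbit of $q$ for a much longer block $N_2\gg N_1$, then $p$ again for $N_3\gg N_2$, and so on, inserting transition segments of bounded length between consecutive blocks (available by transitivity). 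Shadowing this pseudo-orbit by a genuine orbit of a point $x$, and letting the ratios $N_{k}/(N_1+\dots+N_{k-1})\to\infty$, the partial averages $\frac1n S_n g(x)$ are dominated by the last completed block; hence along suitable subsequences of $n$ they approach $\int g\,d\mu_p$ and $\int g\,d\mu_q$ alternately, so they cannot converge — contradiction. Consequently, everywhere-convergence of the Birkhoff averages of $g$ forces $\int g\,d\mu$ to be the same for all invariant $\mu$; applied to periodic orbit measures, this yields that all periodic-orbit averages of $g$ coincide.

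Applying this to $g=\phi^s$ and then to $g=\phi^u$ gives $\lambda^s_f(p)=\lambda^s_f(q)$ and $\lambda^u_f(p)=\lambda^u_f(q)$ for every pair of periodic points $p,q$, which is the hypothesis of Theorem~\ref{teo2}. That theorem then produces a smooth conjugacy between $f$ and its linearization $A$, completing the proof.

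The main obstacle is carrying out the shadowing construction in the present \emph{non-invertible} setting, where the unstable direction is not globally defined from the past. I would circumvent this by performing the whole construction in the inverse limit $(M^f,\tilde f)$: this is a compact, transitive, topologically Anosov system (expansive with the shadowing property), the observables lift to continuous functions $\phi^s\circ p$ and $\phi^u\circ p$ via the projection $p:M^f\to\mathbb{T}^2$, and Birkhoff averages along $\tilde f$-orbits project onto Birkhoff averages along the corresponding forward $f$-orbits. Thus a point $\tilde x\in M^f$ with divergent average projects to a point $x_0=p(\tilde x)\in\mathbb{T}^2$ with divergent average for $\phi^s$ (respectively $\phi^u$), which is the contradiction needed. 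The remaining routine points — transitivity and shadowing for Anosov endomorphisms of the torus with irreducible linearization, and the negligibility of the bounded transition segments — are standard and would be invoked rather than reproved.
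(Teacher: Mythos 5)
Your proposal is correct and follows essentially the same route as the paper: reduce to Theorem~\ref{teo2} by showing that everywhere-existence of the exponents forces all periodic points to carry the same stable and the same unstable exponent. The paper obtains that step by invoking the specification property and citing \cite{micendo} for the identities, whereas you supply a self-contained proof of it via the concatenation-and-shadowing construction in the inverse limit (together with the nice observation that the unstable exponent is the Birkhoff average of the continuous quotient cocycle on $T\mathbb{T}^2/E^s$, which sidesteps the past-dependence of $E^u$); this is the standard mechanism underlying the cited result, so the two arguments coincide in substance.
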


\begin{maintheorem}\label{teoplus} Let $f: \mathbb{T}^2 \rightarrow \mathbb{T}^2 $ be a $C^{\infty}$ special Anosov endomorphism, with degree $d \geq 2$ and $A$ its linearization. The following are equivalent.
\begin{enumerate}
\item $f$ preserves a measure $\mu$ absolutely continuous with respect to $m.$
\item $f$ is smoothly conjugated with its linearization $A.$
\item $f$ preserves a measure $\mu$ absolutely continuous with respect to $m,$ with $C^1$ density.
\item For any point $p $ such that $f^n(p) = p,$ for some integer $n \geq 1,$ holds $Jf^n(p) = d^n.$
\item There exists $c>0$ such that $Jf^n(p) = c^n,$ for any $p$ such that $f^n(p) = p,$ for some $n \geq 1$ is an integer number.
\end{enumerate}

\end{maintheorem}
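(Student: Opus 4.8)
The plan is to establish the five equivalences by closing a cycle through condition (2), disposing of the cheap implications first and isolating the two genuinely analytic steps $(5)\Rightarrow(2)$ and $(1)\Rightarrow(2)$. The cheap implications are as follows. For $(2)\Rightarrow(3)$: if $h$ is a $C^\infty$ diffeomorphism with $h\circ f=A\circ h$, then since the degree equals $|\det A|=d$ the Haar measure $m$ is $A$-invariant, so $\widetilde{m}=h^{-1}_\ast m$ is $f$-invariant with $C^\infty$ (hence $C^1$) density $|\det Dh^{-1}|$; and $(3)\Rightarrow(1)$ is trivial. For $(2)\Rightarrow(4)$: if $f^n(p)=p$ then $h(p)$ is $A$-periodic and $Df^n(p)=Dh(p)^{-1}A^nDh(p)$, whence $Jf^n(p)=|\det A^n|=d^n$; and $(4)\Rightarrow(5)$ holds with $c=d$.

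For $(5)\Rightarrow(2)$ the plan combines Theorem \ref{shi}, a Livšic-type argument, and the dichotomy of Theorem \ref{teo1}. Since $f$ is special, Theorem \ref{shi} gives $\lambda^s_f(p)=\lambda^s_A$ for every periodic $p$; as $E^s_f$ is a well-defined H\"older bundle over $\T^2$, the H\"older function $\log J^sf$ has all its periodic Birkhoff averages equal to the constant $\lambda^s_A$, so a Livšic theorem for the transitive endomorphism $f$ (lifting to the natural extension $\tilde f$, where the standard statement applies, then descending) produces a continuous $v$ with $\log J^sf=\lambda^s_A+v\circ f-v$. Hypothesis (5) says the smooth function $\log|\det Df|$ has all periodic averages equal to $\log c$; subtracting the stable part, $\log J^uf$ has all periodic averages equal to $\log c-\lambda^s_A$ (here I use that $f$ is special, so $E^u_f$ and hence $\log J^uf$ are genuine H\"older functions on $\T^2$), and Livšic again yields a continuous coboundary $\log J^uf=(\log c-\lambda^s_A)+u\circ f-u$. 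Boundedness of $u$ forces $\frac1n\sum_{i<n}\log J^uf(f^ix)\to\log c-\lambda^s_A$ for \emph{every} $x$, so $\lambda^u_f(\cdot)$ is constant on all of $\T^2$; in particular $Z=\T^2$, and Theorem \ref{teo1} then excludes the null-set alternative and delivers the smooth conjugacy (2). Note that $c=d$ is not needed here, but follows a posteriori from $(2)\Rightarrow(4)$.

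For $(1)\Rightarrow(2)$ the starting point is ergodic-theoretic. Let $\mu\ll m$ be invariant with density $\rho>0$ and $\log\rho\in L^1(\mu)$; being absolutely continuous and invariant, $\mu$ is the unique ergodic SRB measure, so Pesin's formula gives $h_\mu(f)=\lambda^u_f(\mu)$. Writing the fibre-conditional weight of $\mu$ as $p(x)=\rho(x)/(|\det Df(x)|\,\rho(fx))$ and using the transfer identity $\rho(y)=\sum_{f(x)=y}\rho(x)/|\det Df(x)|$, the terms $\log\rho\circ f-\log\rho$ telescope under invariance and yield the folding-entropy identity $F_\mu(f)=\int\log|\det Df|\,d\mu=\lambda^u_f(\mu)+\lambda^s_f(\mu)$. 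Combining $\lambda^s_f(\mu)=\lambda^s_A$ (Theorem \ref{shi}), the relation $\lambda^u_A+\lambda^s_A=\log|\det A|=\log d$, and the universal bound $F_\mu(f)\le\log d$, one obtains $\lambda^u_f(\mu)\le\lambda^u_A$.

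The heart of the matter — and the step I expect to be the main obstacle — is to upgrade this to the equality $\lambda^u_f(\mu)=\lambda^u_A$, i.e. to show the folding entropy of the absolutely continuous measure is maximal (equivalently, that its fibre-conditionals are uniform) and, more strongly, that $\log J^uf$ is cohomologous to a constant. This is precisely where the two-dimensional structure is indispensable: for a purely expanding map the analogous implication is false, since there an absolutely continuous invariant measure always exists without any rigidity. The plan is to exploit that $\mu$ is absolutely continuous in the transverse (stable) direction, not merely SRB along unstables: using the $C^{1+\alpha}$ regularity of the stable foliation and its holonomies furnished by Theorem \ref{shi}, together with the invariance of $\rho$, one controls the unstable conditional densities and shows the defining cocycle is cohomologous to a constant, forcing $\lambda^u_f(\cdot)\equiv\lambda^u_A=\lambda^u_f(\widetilde{m})$ and hence $Z=\T^2$. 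Theorem \ref{teo1} then gives (2). Finally $(2)\Rightarrow(3)\Rightarrow(1)$ and $(2)\Rightarrow(4)\Rightarrow(5)\Rightarrow(2)$ close the cycle, so all five conditions are equivalent.
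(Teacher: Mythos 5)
Your cycle structure and the cheap implications are fine, and your route for $(5)$ is a legitimate variant of the paper's: where you run Liv\v{s}ic on $\log J^u f$ (after using Theorem \ref{shi} to pin the stable periodic data) to make $\lambda^u_f$ constant and then invoke Theorem \ref{teo1} (equivalently Theorem \ref{teo2}), the paper instead proves $(5)\Rightarrow(4)$ directly by writing the degree as $d=\int_{\mathbb{T}^2}Jf\,dw$ for the Liv\v{s}ic-adjusted volume $dw=e^{-\phi}dm$, which forces $c=d$; both are sound, and the paper's version has the small advantage of not needing to decompose $\log|\det Df|$ along the invariant splitting.

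The genuine gap is in $(1)\Rightarrow(2)$, exactly at the point you flag as ``the heart of the matter.'' Your folding-entropy computation only yields $\lambda^u_f(\mu)\le\lambda^u_A$, and the proposed upgrade to equality via ``$C^{1+\alpha}$ stable holonomies controlling the unstable conditional densities'' is not an argument --- it is a restatement of what needs to be proved, and as you yourself observe the analogous statement is false for expanding maps, so some input specific to the invertible stable direction is indispensable. The paper supplies precisely this input through the \emph{inverse} SRB theory (Theorem \ref{teonegativo}): since $\mu\ll m$ and $\mathcal{F}^s_f$ is absolutely continuous, $\mu$ has absolutely continuous conditionals on local stable manifolds, hence satisfies the inverse Pesin formula $h_\mu(f)=\log d-\lambda^s_f(\mu)=\log d-\lambda^s_A$; combined with the forward Pesin formula $h_\mu(f)=\lambda^u_f(\mu)$ this gives $\lambda^u_f(\mu)=\lambda^u_A$ in one line. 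Equivalently, it is the equality $F_\mu(f)=\log d$ (maximal folding entropy for the measure with absolutely continuous stable conditionals) that you are missing, not merely the bound $F_\mu(f)\le\log d$. A second, related omission: Theorem \ref{teo1} is stated for the set $Z=\{x:\lambda^u_f(x)=\lambda^u_f(\widetilde{m})\}$ defined through $\widetilde{m}=h^{-1}_{\ast}m$, so before invoking it you must relate your $\mu$ to $\widetilde{m}$; the paper does this by noting that $h_{\widetilde{m}}(f)=h_m(A)=\log d-\lambda^s_A$ as well, so $\widetilde{m}$ also satisfies the inverse Pesin formula and $\mu=\widetilde{m}$ by uniqueness of the inverse SRB measure. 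Without these two steps your $(1)\Rightarrow(2)$ does not close.
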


Our next Theorem concerns on absolute continuity of foliations. More grossly a foliation $\mathcal{F}$ is absolutely continuous if one can decompose the Lebesgue measure on the absolute continuous measure on each leaf of the foliation and integrate it on each component as in Fubini's Theorem. For more about absolute continuity, we refer \cite{PVW}.
\begin{definition} Let $M$ be a compact and Riemannian manifold $M$ with a volume form $m.$ A foliation $\mathcal{F}$ of $M$ is absolutely continuous if for every $x \in M$ there is a local open neighborhod $U \subset M,$ with $x \in U$ satisfying: given a measurable set $Z \subset U, $ $m(Z)=0$ if and only if in $U$ there is a full set $F \subset U$ such that for evey $p \in F$ the component of leaf $\mathcal{F}(p) \cap U $ containing $p$ meets $Z$ on a zero Lebesgue measure set of the leaf.
\end{definition}

\begin{maintheorem}\label{teo3} Let $A:\mathbb{T}^3 \rightarrow \mathbb{T}^3 $ be a linear Anosov endomorphism which is irreducible over $\mathbb{Q}$ and it has three Lyapunov exponents $\lambda^s_A < 0  < \lambda^{wu}_A < \lambda^{su}_A.$ Then there is a $C^1$ neighborhood $\mathcal{U}$ of $A,$ such that for any $f \in \mathcal{U}$ is defined an expanding quasi-isometric foliation $\mathcal{F}^{wu}_f.$ Moreover, for a given $f \in \mathcal{U}$ being a $C^r, r\geq 2,$  special and $m-$preserving Anosov endomorphism holds $\mathcal{F}^{wu}_f$ is absolutely continuous, if and only if, $\lambda^{wu}_f(p) = \lambda^{wu}_A,$ for any $p \in Per(f).$
\end{maintheorem}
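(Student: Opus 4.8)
The plan is to treat the two assertions separately: first the persistence and geometry of $\mathcal{F}^{wu}_f$, and then the measure-theoretic equivalence, which I reduce to a single cohomological statement about the weak unstable Jacobian. For the existence of $\mathcal{F}^{wu}_f$, note that the linear model $A$ carries a dominated splitting $E^s_A\oplus E^{wu}_A\oplus E^{su}_A$ with two spectral gaps coming from $\lambda^s_A<0<\lambda^{wu}_A<\lambda^{su}_A$. Lifting to the universal cover, where $\overline{f}$ is an Anosov diffeomorphism, the gap between $\lambda^{wu}_A$ and $\lambda^{su}_A$ is a $C^1$-open condition, so for $f$ in a small $C^1$ neighborhood $\mathcal{U}$ of $A$ an invariant cone field argument yields a dominated splitting $E^{wu}_f\oplus E^{su}_f$ of the unstable bundle. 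Inside each two-dimensional unstable leaf the restriction $f|_{W^u_f}$ is expanding with $E^{su}_f$ fast and $E^{wu}_f$ slow; the slow direction of such a map is uniquely integrable, and integrating it leaf by leaf produces the expanding one-dimensional foliation $\mathcal{F}^{wu}_f$. Quasi-isometry in the cover follows by comparison with the affine foliation tangent to $E^{wu}_A$: uniform domination forces the lifted weak leaves to stay within bounded distance of those lines, so the intrinsic and extrinsic metrics are comparable, exactly as in the Brin--Burago--Ivanov / Hammerlindl scheme for perturbations of linear partially hyperbolic maps on $\mathbb{T}^3$. I do not expect this part to present serious difficulties.

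For the equivalence, set $\varphi(x)=\log\jac\big(Df|_{E^{wu}_f}(x)\big)$, a H\"older cocycle since $f$ is $C^r$ with $r\ge 2$ and $E^{wu}_f$ is H\"older. The condition $\lambda^{wu}_f(p)=\lambda^{wu}_A$ for all $p\in Per(f)$ says precisely that $\sum_{j=0}^{n-1}\varphi(f^j(p))=n\,\lambda^{wu}_A$ whenever $f^n(p)=p$. By the Livsic theorem for the transitive system $(f,\mathbb{T}^3)$, the vanishing of this periodic obstruction is equivalent to the existence of a H\"older $u$ with
\[
\varphi=\lambda^{wu}_A+u\circ f-u .
\]
Thus the whole statement reduces to the claim that $\mathcal{F}^{wu}_f$ is absolutely continuous \emph{if and only if} $\varphi$ is cohomologous to the constant $\lambda^{wu}_A$.

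To prove this equivalence I would analyse the disintegration of $m$ along $\mathcal{F}^{wu}_f$. Because $f$ is special, the theorem of \cite{MoTa} gives a conjugacy $h\circ f=A\circ h$ carrying $\mathcal{F}^{wu}_f$ to the linear $\mathcal{F}^{wu}_A$; together with quasi-isometry this identifies the exponential volume growth rate of the weak leaves of $f$ with $\lambda^{wu}_A$. Iterating the inverse branches of $f$ along a weak leaf contracts it at rate $\lambda^{wu}_f$, and the conditional densities of $m$ are governed by the infinite products $\prod_{n\ge1}\exp\big(\varphi(f^{-n}x_0)-\varphi(f^{-n}y)\big)$; when $\varphi=\lambda^{wu}_A+u\circ f-u$ these telescope to the bounded continuous density $\exp(u(x_0)-u(y))$, so the disintegration is equivalent to Lebesgue on leaves and $\mathcal{F}^{wu}_f$ is absolutely continuous. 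Conversely, if the disintegration is absolutely continuous then, by the Ledrappier--Young theory, the partial entropy of $m$ along $\mathcal{F}^{wu}_f$ equals $\lambda^{wu}_f(m)$, while the leaf volume growth bound gives $\lambda^{wu}_f(m)\le\lambda^{wu}_A$; absolute continuity moreover produces a continuous invariant leaf density, i.e. a bounded solution of the coboundary equation, forcing $\varphi$ to be cohomologous to the constant $\lambda^{wu}_f(m)$, and comparison of averages identifies this constant with $\lambda^{wu}_A$. Feeding the resulting cohomology back through Livsic yields $\lambda^{wu}_f(p)=\lambda^{wu}_A$ for every $p\in Per(f)$.

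The main obstacle is this last equivalence between absolute continuity and cohomological triviality of $\varphi$, and in particular excluding the intermediate ``Fubini nightmare'': a priori the disintegration along a one-dimensional weak foliation could be \emph{atomic}, as in the Ruelle--Wilkinson phenomenon, rather than either Lebesgue or genuinely singular nonatomic. The delicate points are therefore to establish the clean dichotomy (absolutely continuous versus atomic) and to upgrade the $m$-almost-everywhere identity $\lambda^{wu}_f=\lambda^{wu}_A$ to the statement for \emph{all} of $Per(f)$. I would handle the upgrade by showing that absolute continuity yields a genuinely continuous, not merely $L^1$, coboundary $u$, since only a continuous $u$ controls the Birkhoff sums over every periodic orbit and thereby transfers the typical exponent to the whole of $Per(f)$.
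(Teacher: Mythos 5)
Your outline correctly isolates the two halves of the statement and the main danger (the atomic/Fubini alternative), and your reduction to the cohomological equation for $\varphi=\log\jac\big(Df|_{E^{wu}_f}\big)$ is essentially the same reduction the paper makes. But both implications have a gap at exactly the point you yourself flag as ``the main obstacle,'' and neither is closed by what you wrote. In the direction ``periodic data $\Rightarrow$ absolute continuity,'' the convergence of the telescoped products to $\exp(u(x_0)-u(y))$ only produces a \emph{candidate} family of leafwise densities; it does not show that the disintegration of $m$ along $\mathcal{F}^{wu}_f$ coincides with that family rather than being atomic --- this is precisely the Ruelle--Wilkinson scenario you mention, and the product formula \eqref{conditionalU} is only known to describe conditionals along the \emph{full} unstable foliation, not along an intermediate one. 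The paper closes this differently: from $\lambda^{wu}_f(p)=\lambda^{wu}_A$ on $Per(f)$ it first upgrades the conjugacy $h$ to a $C^1$ map along $wu$-leaves (the conformal-metric argument of Theorem B of \cite{micendo}), verifies $h_\ast m=m$ by a uniqueness-of-SRB argument, and then \emph{transports} the absolute continuity of the affine foliation $\mathcal{F}^{wu}_A$ back through $h$ (Lemma \ref{lema5}). You never establish $h_\ast m=m$, which is indispensable for any transfer of measure-theoretic information through $h$.

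In the converse direction you obtain only the inequality $\lambda^{wu}_f(m)\le\lambda^{wu}_A$ from quasi-isometry and leaf growth; ``comparison of averages'' does not upgrade this to equality, and without equality the coboundary you extract carries the wrong constant. The paper closes this gap by running the same quasi-isometry estimate on the strong unstable foliation to get $\lambda^{su}_f(m)\le\lambda^{su}_A$ as well, and then invoking the Pesin formula for the $m$-preserving map, $\lambda^{wu}_f(m)+\lambda^{su}_f(m)=h_m(f)=\log k-\lambda^s_A=\lambda^{wu}_A+\lambda^{su}_A$, which forces both inequalities to be equalities (Lemma \ref{lema3}); it then identifies the partial entropy $h_m(f,\mathcal{F}^{wu}_f)$ with $h_m(A,\mathcal{F}^{wu}_A)$ via $h$ and solves the ODE for $h$ along $wu$-leaves to reach \emph{every} periodic point (Lemma \ref{lema4}). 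A smaller but genuine point: unique integrability of the slow direction inside unstable leaves is not automatic for a dominated splitting, and is exactly where Brin's quasi-isometry criterion for dynamical coherence --- and the hypothesis that $f$ is special, so that unstable objects descend from the inverse limit to $\mathbb{T}^3$ --- enters (Lemma \ref{lema1}); your one-line assertion skips this.
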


\begin{cor} Let $A:\mathbb{T}^3 \rightarrow \mathbb{T}^3 $ be a linear Anosov endomorphism which is irreducible over $\mathbb{Q}$ and it has three Lyapunov exponents $\lambda^s_A < 0  < \lambda^{wu}_A < \lambda^{su}_A.$ Then there is a $C^1$ neighborhood $\mathcal{U}$ of $A$ such that for a given $f \in \mathcal{U}$ being a $C^r, r\geq 2,$ special and $m-$preserving Anosov endomorphism the following are equivalent.
\begin{enumerate}
\item $\mathcal{F}^{wu}_f$ is absolutely continuous.
\item $\lambda^{wu}_f(p) = \lambda^{wu}_A,$ for any $p \in Per(f).$
\item $\lambda^{su}_f(p) = \lambda^{su}_A,$ for any $p \in Per(f).$
\item $\lambda^{\sigma}_f(p) = \lambda^{\sigma}_A, \sigma \in \{s,wu,su\},$ for any $p \in Per(f).$
\end{enumerate}
\end{cor}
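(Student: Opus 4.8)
The statement has two essentially independent parts, and I would establish them in turn.

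\emph{Existence and quasi-isometry of $\mathcal{F}^{wu}_f$.} The map $A$ is partially hyperbolic: the three simple exponents $\lambda^s_A<0<\lambda^{wu}_A<\lambda^{su}_A$ split $\mathbb{R}^3$ into the one-dimensional eigenlines $E^s_A\oplus E^{wu}_A\oplus E^{su}_A$, and $E^{wu}_A$ integrates to the linear foliation by lines. Working in the universal cover $\mathbb{R}^3$, where the lift $\overline{f}$ is an Anosov diffeomorphism, robustness of dominated splittings gives, for every $f\in\mathcal{U}$, a continuous splitting $E^s_f\oplus E^{wu}_f\oplus E^{su}_f$ with the same dimensions and with $E^{su}_f$ strictly dominating the center--stable bundle $E^{cs}_f=E^s_f\oplus E^{wu}_f$. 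Since $E^{su}_f$ is uniformly expanding and dominates, $E^{cs}_f$ is normally hyperbolic, so by the Hirsch--Pugh--Shub theory it integrates to an $\overline{f}$-invariant foliation $\mathcal{F}^{cs}_f$; intersecting $\mathcal{F}^{cs}_f$ with the full unstable foliation $\mathcal{F}^{u}_f$ produces the one-dimensional foliation $\mathcal{F}^{wu}_f$ tangent to $E^{wu}_f$. For quasi-isometry I would use a Brin-type argument: each lifted leaf is a $C^1$ curve whose tangent stays in a thin cone about the fixed line $E^{wu}_A$, and a foliation of $\mathbb{R}^3$ that is $C^0$-close to the quasi-isometric linear foliation $\mathcal{F}^{wu}_A$ is itself quasi-isometric.

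\emph{Reduction and the easy direction.} When $f$ is special the bundles $E^{wu}_f,E^{su}_f$ are independent of the past orbit and descend to $\mathbb{T}^3$, so $\mathcal{F}^{wu}_f$ is a genuine foliation of $\mathbb{T}^3$; moreover, by \cite{MoTa}, $f$ is conjugate to $A$ by a homeomorphism $h$ with $h\circ f=A\circ h$, and $h$ carries $\mathcal{F}^{wu}_f$ onto the smooth linear foliation $\mathcal{F}^{wu}_A$. Put $\phi=\log\|Df|_{E^{wu}_f}\|$, a H\"older function whose Birkhoff sum over a periodic orbit of period $n$ equals $n\,\lambda^{wu}_f(p)$. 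If $\lambda^{wu}_f(p)=\lambda^{wu}_A$ for every $p\in Per(f)$, then all periodic sums of $\phi$ agree with those of the constant $\lambda^{wu}_A$; by the Liv\v{s}ic theorem (applied in the inverse limit $M^f$, where $\tilde{f}$ is a homeomorphism and the exponent is a genuine cocycle) one obtains $\phi=\lambda^{wu}_A+u\circ f-u$ for a H\"older $u$. After rescaling the metric on $E^{wu}_f$ by $e^{u}$, $f$ expands $\mathcal{F}^{wu}_f$ at the constant rate $\lambda^{wu}_A$, exactly as $A$ does on $\mathcal{F}^{wu}_A$; the leafwise conjugacy $h$ then conjugates two one-dimensional expanding dynamics with matching derivative data, hence is uniformly $C^{1+\alpha}$ along leaves, and a Journ\'e-type regularity lemma upgrades this to absolute continuity of the $\mathcal{F}^{wu}_f$-holonomies, i.e. $\mathcal{F}^{wu}_f$ is absolutely continuous.

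\emph{The converse direction.} Suppose $\mathcal{F}^{wu}_f$ is absolutely continuous. I would disintegrate the invariant volume $m$ along its leaves; invariance of $m$ under the volume-preserving $f$ turns the leafwise densities into a transfer identity whose transverse factor is, through $h$, the constant linear Jacobian determined by $\lambda^s_A$ and $\lambda^{su}_A$. By Ledrappier's invariance principle the center exponent $\phi$ is then cohomologous to a constant, and comparison of the two measures through $h$ (both $m$ and its image being fully supported and invariant) identifies that constant with $\lambda^{wu}_A$. Evaluating the cohomological identity along a periodic orbit, where the coboundary telescopes, yields $\lambda^{wu}_f(p)=\lambda^{wu}_A$.

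\emph{Main obstacle.} The delicate step is the passage from periodic data to genuine absolute continuity: Liv\v{s}ic supplies only leafwise (cohomological) information, while absolute continuity is a transverse property. Converting the uniform $C^{1+\alpha}$ leaf conjugacy into absolutely continuous holonomy maps of $\mathcal{F}^{wu}_f$ is where the argument is least routine; here I would lean on the quasi-isometry established in the first part together with a Journ\'e-type holonomy estimate, controlling the $\mathcal{F}^{wu}_f$-holonomy Jacobian through the synchronized expansion rate rather than by any direct computation.
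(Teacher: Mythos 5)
There is a genuine gap: your proposal only addresses the equivalence $(1)\Leftrightarrow(2)$ and never touches items $(3)$ and $(4)$. The statement is a four-way equivalence, and the link between the $wu$-exponent and the $su$-exponent at periodic points does not come for free. The paper obtains it from two inputs you do not use: first, Theorem \ref{shi} (An--Gan--Gu--Shi), which for a special Anosov endomorphism with one-dimensional stable bundle forces $\lambda^s_f(p)=\lambda^s_A$ for every $p\in Per(f)$; second, Lemma \ref{lema2}, by which $m$-preservation forces $Jf^n(p)=d^n$ at every periodic point, i.e.\ $\lambda^s_f(p)+\lambda^{wu}_f(p)+\lambda^{su}_f(p)=\log d=\lambda^s_A+\lambda^{wu}_A+\lambda^{su}_A$. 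Subtracting the rigid stable exponent gives $\lambda^{wu}_f(p)+\lambda^{su}_f(p)=\lambda^{wu}_A+\lambda^{su}_A$ for all periodic $p$, whence $(2)\Leftrightarrow(3)$, and combining with the stable rigidity gives $(4)$. Without some such argument your proof establishes a strictly weaker statement; note also that the simplicity of the spectrum ($\lambda^{wu}_A\neq\lambda^{su}_A$) alone would not suffice, since a priori a single periodic point could have $\lambda^{wu}_f(p)=\lambda^{wu}_A$ but $\lambda^{su}_f(p)\neq\lambda^{su}_A$.

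Within $(1)\Leftrightarrow(2)$, the two places you flag as delicate are exactly where the paper does real work, and your sketches do not close them. For $(2)\Rightarrow(1)$ the paper does not use a Journ\'e-type holonomy estimate (Journ\'e's lemma concerns joint regularity along transverse foliations, not absolute continuity of a single foliation); instead it first shows $h_{\ast}(m)=m$ via entropy and uniqueness of the SRB measure, and then transfers null sets through $h$ using its leafwise $C^1$ regularity and the (trivial) absolute continuity of the linear foliation $\mathcal{F}^{wu}_A$. Leafwise $C^{1+\alpha}$ regularity of $h$ alone says nothing transverse, so your argument is missing the step that controls how $h$ acts on ambient Lebesgue measure. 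For $(1)\Rightarrow(2)$ the paper's route is the quasi-isometric growth comparison in the universal cover (as in Theorem \ref{teo1}) giving $\lambda^{wu}_f(m)\leq\lambda^{wu}_A$ and $\lambda^{su}_f(m)\leq\lambda^{su}_A$, combined with Pesin's formula and volume preservation to force equalities, and then the SRB theory of the $wu$-foliation (conditional densities and the O.D.E.\ for $h$ along $wu$-leaves) to pass from the $m$-typical exponent to the periodic data. Your appeal to Ledrappier's invariance principle is not the right tool here and, as stated, does not yield the cohomological identity you claim.
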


\section{Preliminaries on SRB measures for endomorphisms} The reader more acquainted with SRB theory can jump straight to the proofs of the theorems.

 At this moment we need to work with the concept of SRB measures for endomorphisms. In fact, SRB measures play an important role in the ergodic theory of differentiable dynamical systems. For $C^{1+\alpha}-$systems these measures can be characterized as ones that realize the Pesin Formula or equivalently the measures for which the conditional measures are absolutely continuous w.r.t. Lebesgue restricted to local stable/unstable manifolds. We go to focus our attention on the endomorphism case. Before proceeding with the proof let us give important and useful definitions and results concerning SRB measures for endomorphisms.

First, let us recall an important result.

\begin{theorem}[\cite{QXZ}] Let $(M,d)$ be a compact metric space and $f: M \rightarrow M$ a continuous map. If $\mu$ is an $f-$invariant Borelian probability measure, the exist a unique $\tilde{f}-$invariant Borelian probability measure $\tilde{\mu}$ on $M^f,$ such that $\mu(B) = \hat{\mu}(p^{-1}(B)).$
\end{theorem}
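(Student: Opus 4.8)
The plan is to realize $\tilde{\mu}$ as the unique Borel probability measure on the inverse limit $M^f$ whose coordinate projections all coincide with $\mu$, and to build it directly via the Riesz representation theorem rather than through a Carath\'{e}odory extension. Throughout I identify $M^f$ with the inverse limit $\varprojlim(M \xleftarrow{f} M \xleftarrow{f} \cdots)$ by reading off the backward coordinates, and for $k \geq 0$ I write $p_k : M^f \to M$, $p_k(\tilde{x}) = x_{-k}$, for the projection onto the $(-k)$-th factor, so that $p = p_0$. Two identities drive everything: $p_{k-1} = f \circ p_k$, coming from the orbit relation $x_{-k+1} = f(x_{-k})$, and $p_k \circ \tilde{f} = p_{k-1}$ for $k \geq 1$ together with $p_0 \circ \tilde{f} = f \circ p_0$, coming from the shift.

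First I would define a positive normalized linear functional on the algebra of cylinder functions. Call $g \in C(M^f)$ a cylinder function if $g = \phi \circ p_k$ for some $\phi \in C(M)$ and some $k$; these form a subalgebra $\mathcal{A}_0 \subset C(M^f)$ containing the constants, since a product $(\phi \circ p_k)(\psi \circ p_l)$ with $k \leq l$ equals $((\phi \circ f^{\,l-k})\,\psi) \circ p_l$, again a cylinder function. As the coordinate functions separate points of $M^f$, Stone--Weierstrass gives that $\mathcal{A}_0$ is uniformly dense in $C(M^f)$. On $\mathcal{A}_0$ I set $L(\phi \circ p_k) = \int_M \phi \, d\mu$. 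The crucial point, and the only place the hypothesis is used, is that this is \emph{well defined}: one cylinder function admits representations at every level $l \geq k$, namely $\phi \circ p_k = (\phi \circ f^{\,l-k}) \circ p_l$, and $f$-invariance forces $\int \phi \circ f^{\,l-k}\, d\mu = \int \phi \, d\mu$, so the assigned value is independent of the representation. Here I also record that $\mu$ is carried by $\Lambda = \bigcap_{n \geq 0} f^n(M) = p_k(M^f)$, because invariance gives $\mu(f(M)) = 1$ and hence $\mu(\Lambda) = 1$; this removes the ambiguity created by projections failing to be onto. The functional $L$ is linear with $L(1) = 1$ and $|L(g)| \leq \|g\|_\infty$, so it extends continuously to $C(M^f)$, and the extension is positive since $L(1) = \|L\|$. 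By Riesz representation there is a unique Borel probability $\tilde{\mu}$ on $M^f$ with $\int g\, d\tilde{\mu} = L(g)$.

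Next I would verify the two defining properties. Applying the formula to $g = \phi \circ p_0$ yields $\int \phi \circ p\, d\tilde{\mu} = \int \phi\, d\mu$ for all $\phi \in C(M)$, that is $p_*\tilde{\mu} = \mu$, which is the stated relation $\mu(B) = \tilde{\mu}(p^{-1}(B))$ once one passes from continuous functions to Borel sets by regularity. For $\tilde{f}$-invariance I compute on cylinder functions: for $k \geq 1$ one has $(\phi \circ p_k) \circ \tilde{f} = \phi \circ p_{k-1}$, so $\int (\phi \circ p_k) \circ \tilde{f}\, d\tilde{\mu} = \int \phi\, d\mu = \int \phi \circ p_k\, d\tilde{\mu}$, while for $k = 0$ the relation $p_0 \circ \tilde{f} = f \circ p_0$ together with invariance of $\mu$ gives the same conclusion; by density of $\mathcal{A}_0$ this extends to all of $C(M^f)$, so $\tilde{f}_*\tilde{\mu} = \tilde{\mu}$.

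Finally, uniqueness. Suppose $\tilde{\nu}$ is any $\tilde{f}$-invariant Borel probability with $p_*\tilde{\nu} = \mu$. The identity $p_k \circ \tilde{f} = p_{k-1}$ combined with $\tilde{f}$-invariance propagates the projection condition to every level: $(p_k)_*\tilde{\nu} = (p_{k-1})_*\tilde{\nu} = \cdots = (p_0)_*\tilde{\nu} = \mu$. Hence $\int \phi \circ p_k\, d\tilde{\nu} = \int \phi\, d\mu = L(\phi \circ p_k)$ for every cylinder function, and by density $\int g\, d\tilde{\nu} = L(g)$ for all $g \in C(M^f)$, forcing $\tilde{\nu} = \tilde{\mu}$. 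The one point demanding care, and the natural candidate for the main obstacle, is the well-definedness and positivity of $L$ on $\mathcal{A}_0$, since this is precisely where $f$-invariance and the concentration of $\mu$ on $\Lambda$ enter; the remaining steps are formal consequences of Stone--Weierstrass, Riesz, and the two projection identities.
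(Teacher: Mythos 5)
Your proposal is correct, but there is nothing in the paper to compare it against: the statement is quoted from \cite{QXZ} without proof (and with typos --- the measure written $\hat{\mu}$ is the $\tilde{\mu}$ of the statement). The standard proof in the cited reference is the primal version of yours: one defines $\tilde{\mu}$ on the semialgebra of cylinder sets $\{\tilde{x}\colon x_{-k}\in B\}$ by the value $\mu(B)$, checks consistency of this assignment using $f$-invariance of $\mu$, and extends by a Kolmogorov--Carath\'{e}odory argument, with uniqueness via a monotone class theorem. You dualize: the functional $L(\phi\circ p_k)=\int_M\phi\,d\mu$ on the cylinder algebra, then Stone--Weierstrass and Riesz. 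Both routes hinge on the identical consistency computation, and in both the genuine crux is exactly the point you isolate: since $f$ need not be surjective, $p_l(M^f)=\Lambda=\bigcap_{n\geq 0}f^n(M)$ rather than $M$, so from $\phi\circ p_k=\psi\circ p_l$ ($k\leq l$) one only gets $\phi\circ f^{l-k}=\psi$ on $\Lambda$, and both well-definedness and the norm bound $|L(\phi\circ p_k)|\leq\|\phi\circ p_k\|_\infty=\sup_\Lambda|\phi|$ require $\mu(\Lambda)=1$, which you correctly extract from invariance via $\mu(f^n(M))=\mu(f^{-n}(f^n(M)))=\mu(M)=1$ and the decreasing intersection. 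Two details worth one line each in a polished write-up: the identity $p_l(M^f)=\Lambda$ needs the standard nonempty-inverse-limit-of-compacta argument to manufacture a full backward orbit through every point of $\Lambda$; and well-definedness should be phrased for two \emph{arbitrary} representations $\phi\circ p_k=\psi\circ p_l$ (reducing them to a common level as above), not only for the canonical promotions $(\phi\circ f^{l-k})\circ p_l$ --- your remark about non-surjective projections shows you saw this. What your functional-analytic route buys is that Borel regularity of $\tilde{\mu}$ and the uniqueness clause fall out of Riesz representation and density at no extra cost, whereas the extension route must argue uniqueness separately; what it costs is the slightly more delicate bookkeeping on $\Lambda$, which you handled.
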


\begin{definition}A measurable partition $\eta$ of $M^f$ is said to be subordinate to
$W^u-$manifolds of a system $(f, \mu)$ if for $\hat{\mu}$-a.e. $\tilde{x} \in M^f,$ the atom $\eta(\tilde{x}),$ containing $\tilde{x},$ has the following properties:
\begin{enumerate}
\item  $p| \eta(\tilde{x}) \rightarrow p(\eta(\tilde{x}))$ is bijective;
\item  There exists a $k(\tilde{x})-$dimensional $C^1-$embedded submanifold $W(\tilde{x})$ of $M$ such that $W(\tilde{x}) \subset W^u(\tilde{x}),$
$$p(\eta(\tilde{x})) \subset W(\tilde{x})$$
and $p(\eta(\tilde{x}))$ contains an open neighborhood of $x_0 $ in $W(\tilde{x}).$ This neighborhood
being taken in the topology of $W(\tilde{x})$ as a submanifold of $M.$
\end{enumerate}
\end{definition}

We observe that by Proposition 3.2 of \cite{QZ}, every there exist   such partition for Anosov endomorphism and they are can be taken increasing, that means $\eta$ refines $\tilde{f}(\eta) .$ Particularly $ p(\eta(\tilde{f}(\tilde{x}))) \subset p(\tilde{f}(\eta(\tilde{x}))) .$

\begin{definition} Let $f: M \rightarrow M$ be a $C^2-$endomorphism preserving an invariant Borelian probability $\nu.$
We say that $\nu$ has SRB property if for every measurable partition $\eta $ of $M^f$ subordinate
to $W^u-$manifolds of $f$  with respect to $\nu$, we have $p(\hat{\nu}_{\eta{(\tilde{x})}})  \ll m^u_{p(\eta(\tilde{x}))},$  for $\hat{\nu}-$a.e. $\tilde{x}$, where
$\{\hat{\nu}_{\eta{(\tilde{x})}} \}_{\tilde{x} \in M^f}$
is a canonical system of conditional measures of $\hat{\nu}$ associated with $\eta,$
and $m^u_{p(\eta(\tilde{x}))} $ is the Lebesgue measure on $W(\tilde{x})$ induced by its inherited Riemannian metric as a submanifold of $M.$
\end{definition}

 In the case of above definition, if we denote by $\rho^u_f$ the densities of conditional measures $\hat{\nu}_{\eta({\tilde{x}})},$ the following relation holds
 \begin{equation} \label{conditionalU}
 \rho^u_f(\tilde{y}) =
\frac{\Delta^u_f(\tilde{x}, \tilde{y} )}{L(\tilde{x})},
 \end{equation}
for each $\tilde{y} \in \eta({\tilde{x}}),$ where
$$ \Delta^u_f(\tilde{x},\tilde{y}) = \displaystyle\prod_{k=1}^{\infty} \frac{J^uf(x_{-k})}{J^uf(y_{-k})}, \tilde{x} = (x_k)_{k \in \mathbb{Z}}, \tilde{y} = (y_k)_{k \in \mathbb{Z}}  $$
and
$$L(\tilde{x}) = \int_{\eta(\tilde{x})} \Delta^u_f(\tilde{x}, \tilde{y}) d \hat{m}^u_{\eta({\tilde{x}})}(\tilde{y}).$$

The measure $\hat{m}^u_{\eta({\tilde{x}})}$ is such that $p(\hat{m}^u_{\eta({\tilde{x}})})(B) = m^u_{p(\eta({\tilde{x}}))}(B).$  Therefore $$p(\hat{\nu}_{\eta({\tilde{x}})}) \ll m^u_{p(\eta({\tilde{x}}))},  $$ and
$$\rho^u_f(y) =
\frac{\Delta^u_f(\tilde{x}, \tilde{y})}{L(\tilde{x})}, y \in p(\eta({\tilde{x}})).$$

\begin{theorem}{\cite{PDL1}}\label{pesin1} Let $f : M \rightarrow M$ be a $C^2$ endomorphism and $\mu$ an
$f-$invariant Borel probability measure on $M.$ If $\mu \ll m,$ then there holds
Pesin's formula
\begin{equation}
h_{\mu}(f) =  \displaystyle\int_M \displaystyle\sum \lambda^i(x)^{+}m_i(x) d\mu.
\end{equation}
\end{theorem}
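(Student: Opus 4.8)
The plan is to prove the two inequalities of the entropy formula separately and to match them. First I would lift the system to the inverse limit: by the theorem of \cite{QXZ} quoted above there is a unique $\tilde f$-invariant probability $\hat\mu$ on $M^f$ projecting to $\mu$, and since the natural extension preserves entropy, $h_\mu(f)=h_{\hat\mu}(\tilde f)$. The \emph{upper} bound $h_{\hat\mu}(\tilde f)\le \int_M \sum_i \lambda^i(x)^+ m_i(x)\,d\mu$ is Ruelle's inequality, which holds for any $C^1$ system and uses only that $\tilde f$ cannot expand unstable volume faster than $\exp(\sum_i\lambda_i^+)$; I would simply quote it. All the real work is then in the reverse inequality, and this is exactly where the hypothesis $\mu\ll m$ enters.

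For the lower bound I would fix an increasing measurable partition $\eta$ of $M^f$ subordinate to the $W^u$-manifolds, which exists by Proposition 3.2 of \cite{QZ} (so that $\eta$ refines $\tilde f\eta$ and $\bigvee_{n\ge 0}\tilde f^n\eta$ generates). Since the shift $\tilde f$ is invertible on $M^f$, Rokhlin's formula for an increasing generating partition gives $h_{\hat\mu}(\tilde f)=H_{\hat\mu}(\eta\mid \tilde f\eta)$. The key analytic input is that, because $\mu\ll m$, the system $(f,\mu)$ has the SRB property: the projected conditional measures $p_*\hat\mu_{\eta(\tilde x)}$ are absolutely continuous with respect to the leaf-Lebesgue measure $m^u_{p(\eta(\tilde x))}$, with the explicit densities $\rho^u_f$ recorded in \eqref{conditionalU}. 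Granting this, the conditional entropy $H_{\hat\mu}(\eta\mid\tilde f\eta)$ can be evaluated leafwise: the atom $\eta(\tilde x)$ sits inside the larger unstable plaque $\tilde f\eta(\tilde x)$, and the ratio of the absolutely continuous conditional masses is governed precisely by the unstable Jacobian $J^uf(x_0)=|\det(Df|_{E^u_{\tilde x}})|$. Carrying out this computation yields $H_{\hat\mu}(\eta\mid\tilde f\eta)=\int_{M^f}\log J^uf(x_0)\,d\hat\mu=\int_M \log J^uf\,d\mu$.

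It then remains to identify $\int_M \log J^uf\,d\mu$ with $\int_M\sum_i\lambda^i(x)^+ m_i(x)\,d\mu$. This is the multiplicative ergodic theorem applied to the derivative cocycle over $\tilde f$ on $M^f$: since $\tfrac1n\log|\det(Df^n|_{E^u})|\to \sum_{\lambda_i>0}\lambda_i\, m_i$ along $\hat\mu$-orbits, Birkhoff's theorem gives $\int\log J^uf\,d\hat\mu=\int\sum_i\lambda_i^+ m_i\,d\mu$. Combining the two preceding paragraphs produces $h_\mu(f)\ge \int\sum_i\lambda_i^+ m_i\,d\mu$, which together with Ruelle's inequality forces equality and establishes Pesin's formula.

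The main obstacle is the SRB step, i.e. proving that $\mu\ll m$ forces the unstable conditionals to be absolutely continuous with the densities \eqref{conditionalU}. In the invertible case this is the heart of Pesin theory; here the non-invertibility adds two difficulties that I would have to control. First, the unstable manifolds $W^u(\tilde x)$, and hence $J^uf$ and the densities $\Delta^u_f$, genuinely depend on the whole past $\tilde x\in M^f$, so absolute continuity must be proved upstairs on $M^f$ and only then pushed down by $p$; this is where the increasing property of $\eta$ and the convergence of the infinite product $\Delta^u_f(\tilde x,\tilde y)=\prod_{k\ge1}J^uf(x_{-k})/J^uf(y_{-k})$ (guaranteed by Hölder continuity of $J^uf$ together with backward contraction along $W^u$) are indispensable. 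Second, the Rokhlin computation identifying the conditional entropy with $\int\log J^uf$ requires the leafwise holonomies between the plaques $\eta(\tilde x)\subset\tilde f\eta(\tilde x)$ to be absolutely continuous with the predicted Jacobian; making this rigorous, rather than the formal leafwise bookkeeping sketched above, is the delicate point, and is exactly the content supplied by \cite{PDL1}.
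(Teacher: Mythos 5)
This statement is not proved in the paper at all: it is Liu's theorem, quoted verbatim from \cite{PDL1} as a preliminary, so there is no internal argument to compare yours against. Judged on its own terms, your outline reproduces the standard strategy (pass to the inverse limit $M^f$, Ruelle's inequality for the upper bound, an increasing partition subordinate to $W^u$-manifolds and a conditional-entropy computation for the lower bound), and the reductions you do carry out in full --- $h_\mu(f)=h_{\hat\mu}(\tilde f)$, and $\int\log J^uf\,d\hat\mu=\int\sum_i\lambda_i^+m_i\,d\mu$ via Oseledets plus Birkhoff --- are correct.

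The genuine gap is that the entire content of the theorem is concentrated in the one step you do not prove: that $\mu\ll m$ forces $H_{\hat\mu}(\eta\mid\tilde f\eta)\ge\int\log J^uf\,d\mu$. You route this through the assertion that $\mu\ll m$ implies the SRB property (absolutely continuous unstable conditionals with densities \eqref{conditionalU}), and you yourself flag this as ``exactly the content supplied by \cite{PDL1}'' --- which makes the argument circular as a proof of the cited theorem. There is also a structural objection to that particular route: given the SRB property, the entropy formula is immediate from Theorem \ref{pesin2}, but the direct implication $\mu\ll m\Rightarrow$ SRB is a Ledrappier--Young-type statement that is at least as hard as the formula itself. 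Liu's actual argument (following Ledrappier--Strelcyn) avoids it: one bounds $H_{\hat\mu}(\eta\mid\tilde f\eta)$ from below by comparing the conditional measures of $\hat\mu$ on the plaques $p(\eta(\tilde x))$ with the normalized leaf volumes via Jensen's inequality, using only $\mu\ll m$ and the absolute continuity of the unstable lamination, without ever identifying the conditional densities. Either way, as written your proposal establishes the soft reductions and outsources the analytic heart of the matter, so it does not constitute a proof.
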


\begin{theorem}[\cite{QZ}] \label{pesin2} Let $f$ be a $C^2$ endomorphism on $M$  with an invariant Borel probability
measure $\mu$ such that $\log(|Jf(x)|) \in L^1(M,\mu).$ Then the entropy
formula
\begin{equation}\label{PesinU}
h_{\mu}(f) =  \displaystyle\int_M \displaystyle\sum \lambda^i(x)^{+}m_i(x) d\mu
\end{equation}
holds if and only if $\mu$ has SRB property.
\end{theorem}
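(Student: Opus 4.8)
The plan is to follow the Ledrappier--Young strategy for the characterization of SRB measures, transported to the inverse-limit setting appropriate for endomorphisms as in \cite{QZ}. First I would pass to the natural extension $(M^f,\tilde f,\hat\mu)$, using the standard facts that entropies agree, $h_{\hat\mu}(\tilde f)=h_\mu(f)$, that the Lyapunov exponents $\lambda^i$ and multiplicities $m_i$ are unchanged, and that over $M^f$ the unstable manifolds $W^u(\tilde x)$ are genuine embedded submanifolds. I would then fix an increasing measurable partition $\eta$ subordinate to $W^u$ (so $\tilde f\eta\le\eta$), whose existence is guaranteed as recalled after the definition above (Proposition 3.2 of \cite{QZ}), together with its system of conditional measures $\{\hat\mu_{\eta(\tilde x)}\}$ and the induced leafwise Lebesgue measures $m^u$.

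The statement then reduces to three facts about this partition. First, a Rokhlin-type identity $h_{\hat\mu}(\tilde f)=H_{\hat\mu}(\eta\mid\tilde f\eta)$, obtained by combining $h_{\hat\mu}(\tilde f,\eta)=H_{\hat\mu}(\eta\mid\tilde f\eta)$ for the increasing partition with the principle that a partition subordinate to unstable manifolds is entropy-generating, $h_{\hat\mu}(\tilde f,\eta)=h_{\hat\mu}(\tilde f)$. Second, the geometric inequality
\[
H_{\hat\mu}(\eta\mid\tilde f\eta)\ \le\ \int_{M^f}\log J^u f\,d\hat\mu,
\]
\emph{with equality if and only if} $\hat\mu_{\eta(\tilde x)}\ll m^u$ for $\hat\mu$-a.e. $\tilde x$, that is, precisely the SRB property. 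Third, the Oseledets identity $\int_{M^f}\log J^u f\,d\hat\mu=\int_M\sum_i\lambda^i(x)^+m_i(x)\,d\mu$, valid because $\log J^uf=\log|\det(Df|_{E^u})|$ and $E^u$ is exactly the sum of the positive Oseledets subspaces. Granting these, the equivalence is immediate: the entropy formula \eqref{PesinU} holds exactly when the inequality is an equality, and equality is exactly the SRB property.

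To prove the inequality I would compare, inside a coarse atom $\tilde f\eta(\tilde x)$, the conditional weight $\hat\mu_{\tilde f\eta(\tilde x)}(\eta(\tilde x))$ of a sub-atom with its normalized leafwise Lebesgue measure; since $\tilde f$ expands $W^u$ with local Jacobian $J^uf$, the leafwise Lebesgue fraction occupied by $\eta(\tilde x)$ is controlled by $(J^uf)^{-1}$, so Jensen's inequality applied to the concave logarithm against the probability vector of conditional weights yields the bound. The hypothesis $\log|Jf|\in L^1(M,\mu)$ guarantees that both sides are finite and legitimizes the integration. For the direction SRB $\Rightarrow$ formula, I would instead insert the explicit densities $\rho^u_f=\Delta^u_f/L$ from \eqref{conditionalU} into this computation; their $\tilde f$-equivariance together with the invariance of $\hat\mu$ produces equality, hence $H_{\hat\mu}(\eta\mid\tilde f\eta)=\int\log J^uf\,d\hat\mu$ and, via the three reductions, Pesin's formula.

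The hard part is the equality case of the inequality, i.e. deducing absolute continuity of the conditionals from equality in the entropy bound. Strict concavity of the logarithm forces, $\hat\mu$-a.e., that the conditional weights of the sub-atoms be proportional to their leafwise Lebesgue measures; propagating this through the iterates $\tilde f^{-n}$ and passing to the limit must then reconstruct a conditional measure absolutely continuous with density $\Delta^u_f/L$. The genuine obstacle, special to the endomorphism setting, is controlling the infinite product $\Delta^u_f(\tilde x,\tilde y)=\prod_{k\ge1}J^uf(x_{-k})/J^uf(y_{-k})$ along the backward orbit: establishing its convergence, and the bounded-distortion estimates that upgrade the a.e. proportionality to genuine absolute continuity, relies on Hölder continuity of $\log J^uf$ along unstable leaves, which is precisely where the $C^2$ (or $C^{1+\alpha}$) regularity enters.
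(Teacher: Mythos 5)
This statement is imported verbatim from \cite{QZ}; the paper itself offers no proof of it, so the only meaningful comparison is with the original argument of Qian--Zhu, and your outline reproduces exactly that argument: pass to the natural extension $(M^f,\tilde f,\hat\mu)$, take an increasing partition $\eta$ subordinate to $W^u$, combine the Rokhlin identity $h_{\hat\mu}(\tilde f)=H_{\hat\mu}(\eta\mid\tilde f\eta)$ with the inequality $H_{\hat\mu}(\eta\mid\tilde f\eta)\leq\int\log J^u f\,d\hat\mu$ (equality iff $\hat\mu_{\eta(\tilde x)}\ll m^u$, established \`{a} la Ledrappier via strict concavity and the densities $\Delta^u_f/L$), and the Oseledets identity $\int\log J^u f\,d\hat\mu=\int\sum_i\lambda^i(x)^+m_i(x)\,d\mu$. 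Your proposal is a faithful roadmap of the same approach as the cited source, with the genuinely hard steps (the equality case and the bounded-distortion control of $\Delta^u_f$ along backward orbits) correctly identified as such.
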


For $C^2$ attractors there is a unique measure satisfying the Pesin entropy formula.

\begin{theorem}[Corollary 1.1.2 of \cite{QZ}]
Let $\Lambda$ be an Axiom A attractor of $f \in C^2(O, M)$  and assume
that $T_xf$ is nondegenerate for every $x \in \Lambda.$ Then there exists a unique $f-$invariant
Borel probability measure $\mu$ on $\Lambda$ which is characterized by each of the following
properties: \begin{enumerate}
\item $\mu$ has the SRB property.
\item The system $f : (\Lambda, \mu) \rightarrow (M, \mu)$ satisfies the entropy formula.
\item When $\varepsilon > 0$ is small enough, $\frac{1}{n} \displaystyle\sum_{i=0}^{n-1} \sigma_{f^k(x)} $  converges to $\mu$ as $n  \rightarrow +\infty$ for
Lebesgue almost every $x \in B_{\varepsilon}(\Lambda) = \{ y \in M|\; d(y, \Lambda) < \varepsilon\}.$
\end{enumerate}
\end{theorem}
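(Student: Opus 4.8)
The statement is the endomorphism analogue of the classical Bowen--Ruelle--Sinai theorem, and the plan is to prove it by combining symbolic dynamics on the inverse limit with Ruelle's thermodynamic formalism and the entropy characterization of Theorem \ref{pesin2}. First I would lift the dynamics to the inverse limit $\Lambda^f$, where $\tilde f$ is invertible and the unstable manifolds $W^u(\tilde x)$ together with the unstable Jacobian $J^u f$ are well defined along orbits; nondegeneracy of $T_x f$ guarantees $J^u f>0$, and since $f$ is $C^2$ the geometric potential $\phi^u(\tilde x) = -\log J^u f(x_0)$ is H\"older continuous on $\Lambda^f$. Because $\Lambda$ is an Axiom A attractor, a Markov partition yields a semiconjugacy between $(\tilde f,\Lambda^f)$ and a topologically transitive subshift of finite type, transporting $\phi^u$ to a H\"older potential on the shift, so that the construction reduces to the thermodynamic formalism for H\"older potentials.

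For existence and the SRB property I would take $\mu$ (resp.\ its lift $\hat\mu$) to be the unique equilibrium state of $\phi^u$. For an attractor the topological pressure satisfies $P(\phi^u)=0$, and the variational principle is attained exactly at the measures realizing equality in Ruelle's inequality, i.e.\ at those with $h_\mu(f) = \int \log J^u f\, d\mu$. The Gibbs property of the equilibrium state, after disintegrating $\hat\mu$ along a partition $\eta$ subordinate to $W^u$-manifolds, translates into the explicit conditional densities $\rho^u_f(\tilde y) = \Delta^u_f(\tilde x,\tilde y)/L(\tilde x)$ of \eqref{conditionalU}; this is precisely the assertion $p(\hat\mu_{\eta(\tilde x)}) \ll m^u_{p(\eta(\tilde x))}$, so $\hat\mu$ has the SRB property. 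The equivalence of (1) and (2) is then immediate from Theorem \ref{pesin2}, and uniqueness follows because any measure with the SRB property satisfies the entropy formula, hence is an equilibrium state for $\phi^u$, and the equilibrium state on a transitive subshift of finite type is unique.

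The \emph{physical} characterization (3) is where the real work lies. Let $G$ be the set of $\mu$-generic points, on which $\frac1n \sum_{k=0}^{n-1}\varphi(f^k x) \to \int \varphi\, d\mu$ for every continuous $\varphi$; by Birkhoff $\mu(G)=1$, and by the SRB property $G$ meets $m^u$-almost every point of $m^u$-almost every unstable plaque. To pass from unstable plaques to a full Lebesgue set in $B_\varepsilon(\Lambda)$ I would use absolute continuity of the stable foliation of $f$ on $M$, which is available because stable manifolds are independent of the past and $f$ is $C^{1+\alpha}$: points on a common stable leaf have asymptotic forward orbits and hence identical Birkhoff limits, so the stable saturation of $G$ remains generic; absolute continuity of the stable holonomy then upgrades this saturation to a full Lebesgue set near $\Lambda$, because Lebesgue-almost every $x \in B_\varepsilon(\Lambda)$ is forward asymptotic to $\Lambda$ along a stable leaf that crosses the unstable plaques transversally. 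This gives convergence of the empirical measures to $\mu$ for Lebesgue-almost every $x$, and since a physical measure is determined by its basin, property (3) singles out the same $\mu$.

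The main obstacle is exactly this last step in the non-invertible setting: the unstable conditional measures live on the inverse limit $\Lambda^f$, while Lebesgue measure and the empirical averages live on $M$, so the Fubini/holonomy argument must be run by projecting the subordinate partition through $p$ and controlling the infinite-product distortion $\Delta^u_f(\tilde x,\tilde y) = \prod_{k\ge 1} J^u f(x_{-k})/J^u f(y_{-k})$ over all backward branches. Ensuring that the local product structure of $\hat\mu$ descends correctly under $p$ is where the $C^2$ hypothesis and the bounded-distortion estimates for $\Delta^u_f$ enter decisively.
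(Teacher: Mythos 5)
This theorem is not proved in the paper at all: it is quoted verbatim as background (Corollary 1.1.2 of \cite{QZ}), so there is no internal proof to compare your attempt against. Judged on its own merits, your sketch follows the standard Bowen--Ruelle--Sinai route transplanted to the inverse limit, which is essentially how the result is established in the source the paper cites: code $(\tilde f, \Lambda^f)$ by a subshift of finite type, take the equilibrium state of the geometric potential $\phi^u=-\log J^u f$ at pressure zero, identify the Gibbs conditionals along a subordinate partition with the densities of \eqref{conditionalU}, obtain the equivalence of (1) and (2) from Theorem \ref{pesin2}, and prove the physical property (3) by stable saturation of the generic set plus absolute continuity of the stable lamination. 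You also correctly locate the genuinely non-invertible difficulty: the unstable conditionals live on $\Lambda^f$ while Lebesgue measure and Birkhoff averages live on $M$, so everything must be pushed through $p$ with bounded-distortion control on $\Delta^u_f$; the paper's own definition of subordinate partition (with $p|\eta(\tilde x)$ injective) is exactly the device that makes this descent legitimate.

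For a complete write-up you would still need to discharge several standard debts: (i) $P(\phi^u)=0$ on an attractor requires a volume-growth argument, not just assertion; (ii) uniqueness of the equilibrium state transfers from the shift to $\Lambda^f$ only after checking that the Markov-partition boundary is null for the relevant measures, and it presupposes transitivity of $f|\Lambda$ (or a reduction via spectral decomposition); (iii) absolute continuity of the stable lamination of $B_{\varepsilon}(\Lambda)$ for non-invertible $C^2$ maps must be quoted or proved --- it does hold, precisely because stable manifolds depend only on forward orbits, as you note; (iv) genericity transfers from $\hat\mu$-a.e.\ $\tilde x \in \Lambda^f$ to $\mu$-a.e.\ $x \in \Lambda$ because forward Birkhoff sums are functions of the forward orbit alone, and this should be said explicitly. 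None of these is a gap in the idea; they are the expected technical steps of the classical argument, and your sketch is a faithful outline of the known proof.
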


The above result can be applied to $C^2$ transitive Anosov endomorphisms, particularly it holds for all Anosov endomorphisms of the torus, see \cite{AH}.

There are analogous formulations concerning subordinate partition with respect to stable manifolds, which can be take decreasing, that means $f^{-1} (\eta) \preceq \eta,$ see \cite{PDL2}, Proposition 4.1.1.   In the sense of hyperbolic repellers, including  Anosov endomorphisms, there is an important result concerning inverse SRB measures.

\begin{theorem}\label{teonegativo}[Theorem 3 of \cite{Mh1} and Theorems 2.3 and 2.6 of \cite{PDL2}] Let $\Lambda$ be a connected hyperbolic repellor for a smooth $f: M \rightarrow M .$ Assume that $f$ is $d$ to one, then there is a unique $f-$invariant probability measure $\mu^{-}$ on $\Lambda$ satisfying the inverse Pesin formula

\begin{equation}\label{PesinS}
h_{\mu^{-}}(f) = \log(d) - \displaystyle\int_M \displaystyle\sum \lambda^i(x)^{-}m_i(x) d\mu^{-}.
\end{equation}
In addition, the measure $\mu^{-}$ is characterized by having absolutely continuous conditional measures on local stable
manifolds.

\end{theorem}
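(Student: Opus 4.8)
The plan is to mirror, on stable manifolds and in backward time, the forward SRB construction recorded in \eqref{conditionalU}, and to locate the $\log d$ term as the maximal folding entropy produced by the $d$-to-one branching. Because $\Lambda$ is a hyperbolic repellor, the local stable manifolds $W^s_f(x)$ depend only on the forward orbit of $x$ and form a continuous lamination with $C^{1+\alpha}$ leaves, so no passage to the inverse limit is needed to define them. Fixing a measurable partition $\eta$ of $M^f$ subordinate to stable manifolds that is decreasing, $f^{-1}\eta\preceq\eta$ (available by Proposition 4.1.1 of \cite{PDL2}), I would define candidate conditional densities on the stable plaques by the stable analogue of \eqref{conditionalU},
\[
\rho^s_f(\tilde y)=\frac{\Delta^s_f(\tilde x,\tilde y)}{L^s(\tilde x)},\qquad \Delta^s_f(\tilde x,\tilde y)=\prod_{k=0}^{\infty}\frac{J^sf(x_k)}{J^sf(y_k)},
\]
where $J^sf=|\det(Df|_{E^s})|$ and $L^s(\tilde x)$ is the normalizing integral over the atom. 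The product converges geometrically since $x_k,y_k$ lie on one stable leaf and contract together in forward time. Verifying that this family transforms correctly under $\tilde f$ produces an $f$-invariant Borel probability $\mu^{-}$ whose conditionals on local stable manifolds are, by construction, absolutely continuous with respect to the induced Riemannian Lebesgue measure.

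To obtain the inverse Pesin formula \eqref{PesinS} I would apply Rokhlin's formula, which for the decreasing partition expresses $h_{\mu^{-}}(f)=H_{\hat{\mu}^{-}}(\eta\mid \tilde f^{-1}\eta)$, and split the right-hand side into two contributions. The geometric part is governed by the densities $\rho^s_f$ and integrates the distortion cocycle, yielding $\int_M \log J^sf\,d\mu^{-}=-\int_M\sum\lambda^i(x)^{-}m_i(x)\,d\mu^{-}$. The combinatorial part measures the refinement produced by the $d$-to-one branching recorded in the fibres of $p:M^f\to M$; since each fibre of the point partition carries at most $d$ elements, this folding contributes at most $\log d$, and exactly $\log d$ once $\mu^{-}$ spreads its mass evenly over the $d$ preimage branches, which the $\Delta^s_f$-density construction forces. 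Adding the two contributions gives $h_{\mu^{-}}(f)=\log d-\int_M\sum\lambda^i(x)^{-}m_i(x)\,d\mu^{-}$.

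For the characterization and uniqueness, the converse direction — that any invariant $\mu$ saturating \eqref{PesinS} must have absolutely continuous stable conditionals — is the inverse Ledrappier–Young/Ruelle theory. One first establishes the inverse Ruelle inequality $h_\mu(f)\le \log d-\int_M\sum\lambda^i(x)^{-}m_i(x)\,d\mu$ for every invariant $\mu$ (Theorem 3 of \cite{Mh1}), and then identifies the equality case with absolute continuity of the stable conditionals (Theorems 2.3 and 2.6 of \cite{PDL2}), exactly as Theorem \ref{pesin2} does in forward time. Uniqueness then follows from connectedness: a connected hyperbolic repellor has local product structure and is transitive, so by a Hopf-type absolute-continuity argument an invariant measure with absolutely continuous stable conditionals is ergodic, and any two such measures are the common weak-$\ast$ limit of the backward averaging implicit in $\rho^s_f$, hence coincide.

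The delicate point is the combinatorial half of the entropy computation together with its sharpness: one must show that the maximal folding entropy $\log d$ is actually attained by $\mu^{-}$, and conversely that equality in the inverse Ruelle inequality forces both equidistribution over the $d$ preimages and smoothness along stable leaves. This demands the fine analysis of the conditional measures of $\hat{\mu}^{-}$ on the decreasing stable partitions in the natural extension, which is precisely the technical content of \cite{PDL2} and \cite{Mh1}; by contrast, the geometric distortion estimate and the convergence of $\Delta^s_f$ are routine.
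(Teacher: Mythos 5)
The paper does not prove this statement at all: it is imported verbatim, with attribution, from Theorem 3 of \cite{Mh1} and Theorems 2.3 and 2.6 of \cite{PDL2}, so the only internal material to check your sketch against is the stable-density formula the paper records immediately after the theorem. There your proposal has a concrete error. You take as candidate density
$\Delta^s_f(\tilde x,\tilde y)=\prod_{k=0}^{\infty}J^sf(x_k)/J^sf(y_k)$,
which is the diffeomorphism-style stable density, whereas the correct density for a $d$-to-one endomorphism --- recorded in the paper as \eqref{conditionalS} --- is
\begin{equation*}
\Delta^s_f(x,y)=\prod_{k=0}^{\infty}\frac{Jf(f^k(x))}{Jf(f^k(y))}\cdot\frac{J^sf(f^k(x))}{J^sf(f^k(y))},
\end{equation*}
carrying the ratio of \emph{full} Jacobians as an extra factor. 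That factor is not cosmetic: for non-invertible maps the transfer operator weights mass among the $d$ preimage branches by $1/Jf$, and this is precisely how the folding enters the stable conditionals. With your density alone, the transformation rule of the conditional family under $\tilde f$ does not close up for a $d$-to-one map, and the Rokhlin-type computation would produce $h_{\mu}(f)=-\int\sum\lambda^i(x)^{-}m_i(x)\,d\mu$ with no $\log d$ term --- the formula appropriate to the invertible case, not \eqref{PesinS}.

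Second, the step you flag as delicate is in fact assumed rather than proved: you claim the density construction ``forces'' $\mu^{-}$ to spread its mass evenly over the $d$ preimage branches, so that the combinatorial contribution is exactly $\log d$. The folding entropy $F_\mu(f)=H_\mu(\epsilon\mid f^{-1}\epsilon)$ satisfies $F_\mu(f)\le\log d$ with equality if and only if the fiber conditionals are uniform, and showing that the inverse SRB measure attains this --- and conversely that equality in the inverse Ruelle inequality forces both uniform fiber conditionals and absolutely continuous stable conditionals --- is exactly the content of \cite{Mh1} and \cite{PDL2}; your sketch defers it back to those same references, so what remains is a framework around the cited theorems rather than an argument. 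Relatedly, writing $h_{\mu^{-}}(f)=H_{\hat{\mu}^{-}}(\eta\mid\tilde f^{-1}\eta)$ for a decreasing stable-subordinate $\eta$ and then ``splitting'' into geometric and combinatorial parts is essentially restating Liu's identity $h_\mu(f)=F_\mu(f)-\int\sum\lambda^i(x)^{-}m_i(x)\,d\mu$ in the equality case, i.e.\ it is circular as a proof strategy. The uniqueness paragraph (``Hopf-type argument plus common weak-$\ast$ limits of backward averaging'') is likewise too vague to carry weight, though since the paper itself offers no proof of uniqueness either, that part is at least honestly attributed. Your observation that stable manifolds need no passage to the inverse limit is correct, but it holds for all endomorphisms, not specially for repellers.
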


In the setting of the previous Theorem, if $(f, \mu)$ satisfies the Stable Pesin Formula \ref{PesinS}, then for a given subordinate partition $\eta,$ with respect to stable manifolds, we have $$ \mu_{\eta (x)} \ll m^s_{\eta (x)},$$ for $\mu-$ a.e $x \in M.$ Moreover
\begin{equation}\label{conditionalS}
\rho^s_f(x) = \frac{\Delta^s_f(x,y)}{\int_{\eta(x)} \Delta^s_f(x,y)dm^s_{\eta (x)} }, \; \forall y \in \eta(x).
\end{equation}

Here $\Delta^s_f(x,y) = \prod_{k = 0}^{\infty} \frac{Jf(f^k(x))}{Jf(f^k(y))}\cdot \frac{J^sf(f^k(x))}{J^sf(f^k(y))}.$
See \cite{PDL2} as a reference.

The theorems on Pesin formulas are true in our setting since every tori Anosov endomorphism is transitive, see \cite{AH}.

We finalize the preliminaries section with a lemma whose proof is essentially the same as Corollary 4.4 of \cite{Llave92}, up to minor adjustments using local inverses.

\begin{lemma} For a $C^{k}, k \geq 2,$ Anosov endomorphism, the conditional measures of stable and unstable SRB measures restricted to stable and unstable leaves respectively are $C^{k-1}.$ In particular, if $f$ is smooth, then the conditional measures are smooth.
\end{lemma}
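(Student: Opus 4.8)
The plan is to read off the conditional densities from the explicit product formulas \eqref{conditionalU} and \eqref{conditionalS} and to show that, restricted to a single leaf, each infinite product is a $C^{k-1}$ function. First I would pass to logarithms, turning $\Delta^s_f(x,y)=\prod_{j=0}^{\infty}\frac{Jf(f^jx)}{Jf(f^jy)}\cdot\frac{J^sf(f^jx)}{J^sf(f^jy)}$ into the series $\log\Delta^s_f(x,y)=\sum_{j=0}^{\infty}\bigl[\psi(f^jx)-\psi(f^jy)\bigr]$, where $\psi:=\log Jf+\log J^sf$. The key observation is that $\psi$ is $C^{k-1}$ along a stable leaf: $Jf$ is $C^{k-1}$ because $f$ is $C^k$, and although $E^s$ is only H\"older transversally, along a stable leaf $E^s$ is the tangent bundle of a $C^k$ submanifold, so $J^sf$ is $C^{k-1}$ along the leaf. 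Fixing $x$ and varying $y\in W^s(x)$, each summand $y\mapsto\psi(f^jy)$ is the composition of the $C^{k-1}$ map $\psi$ with the forward iterate $f^j$, whose restriction to $W^s(x)$ is a $C^k$ contraction of some rate $\mu<1$.

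The heart of the argument is the estimate that, for a $C^k$ contraction $g$ of rate $\mu<1$, one has $\|D^ig^n\|\le C_i\,\mu^n$ for every $1\le i\le k$. I would prove this by induction on $i$ using the Fa\`a di Bruno expansion of $D^i(g^n)=D^i(g^{n-1}\circ g)$: writing $M_i(n):=\sup\|D^ig^n\|$, the top term contributes $\mu^iM_i(n-1)$, while the single-block term and all intermediate terms are controlled by $C\mu^{n-1}$ through the inductive hypothesis and the first-order contraction, closing the recursion $M_i(n)\le\mu^iM_i(n-1)+C\mu^{n-1}$. Feeding this back into Fa\`a di Bruno for $D^i(\psi\circ f^j)$ shows that every derivative of order $1\le i\le k-1$ is bounded by $C\mu^j$, since even the worst case (a single block) carries one factor $D^if^j=O(\mu^j)$ against a bounded $D^i\psi$. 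Hence the termwise $i$-th derivatives of the series are summable, while the zeroth order is handled by the difference $\psi(f^jx)-\psi(f^jy)=O(\mu^j)$ via the mean value inequality. Consequently $\log\Delta^s_f(x,\cdot)$, and therefore $\rho^s_f$, which is $\Delta^s_f$ divided by the normalizing leaf-integral, is $C^{k-1}$ along $W^s$.

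For the unstable densities I would run the same scheme on \eqref{conditionalU}, writing $\log\Delta^u_f(\tilde x,\tilde y)=\sum_{j=1}^{\infty}\bigl[\log J^uf(x_{-j})-\log J^uf(y_{-j})\bigr]$. The only genuine adjustment, exactly the ``local inverses'' remark, is that for a non-invertible $f$ there is no global backward map, so I would follow the prescribed past orbit $\tilde x$ and use the local inverse branches $f^{-1}_{\mathrm{loc}}$ along it; on the atom $\eta(\tilde x)$ the map $\tilde y\mapsto y_0$ is a bijection, and $y_{-j}=f^{-j}_{\mathrm{loc}}(y_0)$ depends $C^k$ on $y_0$ along the leaf. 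Along the unstable leaf these branches are $C^k$ contractions of rate $\mu<1$, the inverse of the unstable expansion, and $\log J^uf$ is $C^{k-1}$ along $W^u$ for the same tangency reason as before, so the contraction estimate applies verbatim to $y\mapsto\log J^uf(f^{-j}_{\mathrm{loc}}(y))$.

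I expect the main obstacle to be the uniform bookkeeping in the derivative estimate: verifying that the constants $C_i$ can be chosen uniformly over the leaf and over all base orbits $\tilde x$, and checking that the $C^{k-1}$ regularity of $J^sf$ and $J^uf$ along leaves holds with uniform norms despite the merely H\"older transverse dependence of the invariant bundles. Once the exponential decay $\|D^i(\psi\circ g^j)\|\le C\mu^j$ is secured uniformly, convergence of the series in the $C^{k-1}$ leaf topology, and hence smoothness of the densities when $f$ is $C^\infty$, follows from the Weierstrass $M$-test applied derivative by derivative.
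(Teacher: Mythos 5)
Your proposal is correct and follows essentially the same route as the paper, which proves this lemma simply by citing Corollary 4.4 of de la Llave's 1992 paper ``up to minor adjustments using local inverses'': you have reconstructed exactly that argument, namely the logarithmic telescoping of the product formulas \eqref{conditionalU} and \eqref{conditionalS}, the Fa\`a di Bruno contraction estimate $\|D^i g^n\|\le C_i\mu^n$ for leafwise contractions, and the Weierstrass $M$-test in the $C^{k-1}$ leaf topology, together with the local-inverse-branch adjustment on the unstable side that the paper alludes to. The points you flag as potential obstacles (leafwise $C^{k-1}$ regularity of $J^sf$, $J^{u}f$ despite H\"older transverse dependence, and uniformity of constants) are handled by the uniform $C^k$ bounds of the stable/unstable manifold theorem, as in the cited reference.
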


\section{Proof of Theorem \ref{teo1}}

The proof of Theorem \ref{teo1} deals with techniques from \cite{MT13, MT, micendo}. For completeness, we redo here similar arguments used in past works. We denote by $\overline{f}: \mathbb{R}^2 \rightarrow \mathbb{R}^2 $ the lift of $f$ and for simplicity we denote also by $A: \mathbb{R}^2 \rightarrow \mathbb{R}^2 $ the lift of $A: \mathbb{T}^2 \rightarrow \mathbb{T}^2. $

\begin{definition} \label{quasi isometric}
A foliation $W$ of $\mathbb{R}^n$  is quasi-isometric if there exist positive constants $Q$ and $b$
such that for all $x, y$ in a common leaf of $W$ holds
$$d_W(x, y) \leq Q^{-1} || x - y|| + b.$$
Here $d_W$ denotes the Riemannian metric on $W$ and $\|x-y\|$ is the Euclidean distance.
\end{definition}

\begin{remark}\label{remarkquasi} Observe that if $||x - y||$ is large enough, we can  consider $b = 0, $ in the above definition.
\end{remark}

\begin{lemma} Let $f: \mathbb{T}^2 \rightarrow \mathbb{T}^2$ be an Anosov endomorphism.  Then  the lifted unstable and stable foliations $\mathcal{F}^u_{\overline{f}}$ and $\mathcal{F}^s_{\overline{f}}$ are quasi isometric
\end{lemma}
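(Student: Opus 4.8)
The plan is to prove quasi-isometry of the lifted invariant foliations by comparing them with the linear foliations of the hyperbolic automorphism $A$, exploiting the fact that $f$ is at bounded distance from $A$ in the universal cover. First I would recall the standard fact that the lift $\overline{f}:\mathbb{R}^2\to\mathbb{R}^2$ can be written as $A+\phi$, where $\phi$ is a $\mathbb{Z}^2$-periodic continuous map, hence uniformly bounded: $\sup_{x}\|\overline{f}(x)-Ax\|=:K<\infty$. Since $A$ is a hyperbolic integer matrix acting on $\mathbb{R}^2$, its stable and unstable eigendirections $E^s_A,E^u_A$ are one-dimensional linear subspaces spanning $\mathbb{R}^2$, and the linear foliations $\mathcal{F}^{s}_A,\mathcal{F}^{u}_A$ by cosets of these lines are trivially quasi-isometric (in fact isometric, with $d_W(x,y)=\|x-y\|$ for $x,y$ on a common linear leaf).

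The key step is to show that the nonlinear leaves $\mathcal{F}^{u}_{\overline{f}}$ and $\mathcal{F}^{s}_{\overline{f}}$ stay within a bounded Hausdorff distance of the corresponding linear leaves, so that their intrinsic metric is comparable to the Euclidean one. I would argue as follows for the unstable foliation (the stable case being symmetric, applied to local inverse branches or to $\overline{f}^{-1}$ on the lift, which is an honest Anosov diffeomorphism of $\overline{M}=\mathbb{R}^2$). Take $x,y$ on a common leaf of $\mathcal{F}^{u}_{\overline{f}}$, parametrize the leaf by arc length, and estimate $d_W(x,y)$. By uniform expansion along $E^u_{\overline{f}}$ with rate $\lambda>1$, backward iteration contracts unstable arc length: $d_W(\overline{f}^{-n}x,\overline{f}^{-n}y)\le C\lambda^{-n}d_W(x,y)$ along the appropriate backward branch. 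Meanwhile the cone-field/transversality estimate, together with the bound $\|\overline{f}-A\|\le K$ and hyperbolicity of $A$, forces each leaf of $\mathcal{F}^{u}_{\overline{f}}$ to lie in a bounded tube around a leaf of $\mathcal{F}^{u}_A$; this is the standard shadowing/graph-transform argument showing the invariant manifolds of $\overline{f}$ are uniformly $C^0$-close to the linear ones. Consequently the unit tangent vectors of $\mathcal{F}^{u}_{\overline{f}}$ make a uniformly bounded angle with $E^u_A$, whence $d_W(x,y)\le Q^{-1}\|x-y\|+b$ for constants $Q,b$ depending only on this angle bound and $K$.

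The main obstacle I anticipate is handling the backward dynamics correctly in the endomorphism setting: because $f$ is non-invertible, $\overline{f}^{-1}$ is only well-defined on the lift $\mathbb{R}^2$ (where $\overline{f}$ is a diffeomorphism) and not on the torus, so one must be careful to perform all unstable-manifold estimates upstairs in $\overline{M}$, using that the proposition quoted from \cite{MP} guarantees $\overline{f}$ is a genuine Anosov diffeomorphism with honest foliations $\mathcal{F}^s_{\overline{f}},\mathcal{F}^u_{\overline{f}}$. Once the problem is lifted, the argument reduces to the classical fact that invariant foliations of an Anosov diffeomorphism of $\mathbb{R}^n$ that is a bounded perturbation of a hyperbolic linear map are quasi-isometric; I would invoke the cone condition and the uniform angle bound to close the estimate, and Remark \ref{remarkquasi} lets me absorb the additive constant $b$ for points that are far apart, simplifying the final comparison.
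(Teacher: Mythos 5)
Your overall strategy (compare $\overline{f}$ with $A$ via the bounded difference $\|\overline{f}-A\|_\infty<\infty$, show the lifted leaves stay in bounded tubes around the linear leaves, then convert this into a quasi-isometry estimate) starts out parallel to the paper's proof, and the tube statement itself is fine: the paper gets it from a lift $H$ of the Franks--Manning conjugacy with $\|H^{-1}-\mathrm{id}\|_\infty<R$, which is essentially your shadowing/graph-transform argument. The gap is in the step where you pass from ``each leaf of $\mathcal{F}^u_{\overline{f}}$ lies in a bounded tube around a leaf of $\mathcal{F}^u_A$'' to ``the unit tangent vectors of $\mathcal{F}^u_{\overline{f}}$ make a uniformly bounded angle with $E^u_A$.'' That implication is false: a curve such as $t\mapsto(t,\epsilon\sin(t^2))$ stays in an $\epsilon$-tube around the horizontal axis, yet its tangent directions are not confined to any cone and its arc length between $x$ and $y$ grows superlinearly in $\|x-y\|$, so it is not quasi-isometrically embedded. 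Containment in a tube controls position, not direction, and hence does not by itself prevent the backtracking/oscillation that quasi-isometry must exclude.

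An angle (cone) bound on $E^u_{\overline{f}}$ relative to $E^u_A$ would indeed close the argument (this is Brin's criterion, which the paper does use later, in Lemma \ref{lema1}, for Theorem \ref{teo3}), but it is only available when $f$ is assumed $C^1$-close to $A$; the present lemma is stated for an arbitrary Anosov endomorphism of $\mathbb{T}^2$, where $E^u_{\overline{f}}(x)$ need not lie in any prescribed cone around $E^u_A$. The missing ingredient, and the one the paper actually uses following Hammerlindl \cite{H}, is the \emph{global product structure} of $\mathcal{F}^s_{\overline{f}}$ and $\mathcal{F}^u_{\overline{f}}$ (each stable leaf meets each unstable leaf in exactly one point, from \cite{HH}). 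Combining global product structure with the bounded-tube property is what rules out oscillation: an unstable leaf cannot recross the stable leaves it has already met, and the stable leaves themselves sit in tubes around translates of $E^s_A$, which forces linear progress in the $E^u_A$-direction per unit of arc length. Your proposal never invokes product structure, so as written the estimate $d_W(x,y)\le Q^{-1}\|x-y\|+b$ does not follow. (The backward-contraction estimate $d_W(\overline{f}^{-n}x,\overline{f}^{-n}y)\le C\lambda^{-n}d_W(x,y)$ you mention is correct but is not used anywhere to close the argument.)
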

\begin{proof}
First, we observe that $h$ sends unstable manifolds of $A$ in unstable manifolds of $f.$ Moreover, considering any lift $H$ of $h$ hods that $||H^{-1} - id||_{\infty} < R$ for some positive constant $R.$ So if $L$ is a unstable leaf of $A$ and $W = H^{-1}(L)$ an unstable leaf of $\overline{f},$ then $W \subset B_R(L),$ where $B_R(L) = \{x \in \mathbb{R}^2| \;  d(x, L) < R\}.$ From \cite{HH}, the foliations $\mathcal{F}^u_{\overline{f}}$ and $\mathcal{F}^s_{\overline{f}}$ have a global product structure, meaning that, if $F^s \in \mathcal{F}^s_{\overline{f}} $ and $^u \in \mathcal{F}^u_{\overline{f}},$ then $F^s \cap F^u$ consists of a single point. With these two properties it is possible to prove that  $\mathcal{F}^u_{\overline{f}}$ and $\mathcal{F}^s_{\overline{f}}$ are quasi-isometric foliations, see the detailed argument in \cite{H}. The reader can see nice properties and Proposition 2.16 of \cite{H} to complete the proof. The dictionary from \cite{H} to this proof is the following. Here a unstable and stable leaves $F^u$ and $F^s$  play like $W^c_f$  and $W^{us}_f$ in \cite{H}.
\end{proof}

\begin{cor}\label{nice}
 For any Anosov endomorphism $f: \mathbb{T}^2 \rightarrow \mathbb{T}^2$ with linearization $A,$ the following properties hold in the universal covering:
\begin{enumerate}
\item For each $k \in \mathbb{N}$ and $C > 1$ there is $M$ such that,
$$||x -  y|| > M \Rightarrow \frac{1}{C} \leq \displaystyle\frac{||\overline{f}^kx - \overline{f}^k y|| }{||A^kx - A^ky||} \leq C.$$
\item  $ \displaystyle\lim_{||y - x || \rightarrow +\infty} \frac{y-x}{||y - x ||} = E_A^{\sigma}, \;\;  y \in \mathcal{F}^{\sigma}_{\overline{f}} (x), \sigma  \in \{s,  u\},$
uniformly.
\end{enumerate}
\end{cor}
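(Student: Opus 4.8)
The plan is to deduce both statements from the two facts established in the preceding lemma: that the conjugacy $h$ lifts to $H$ with $\|H^{-1}-\mathrm{id}\|_\infty<R$ for some $R>0$ (hence also $\|H-\mathrm{id}\|_\infty<R'$, since $h$ is a homeomorphism of the compact torus), and that the foliations $\mathcal{F}^u_{\overline f}$ and $\mathcal{F}^s_{\overline f}$ are quasi-isometric. The key observation is that $H$ maps leaves of $\mathcal{F}^\sigma_{\overline f}$ to the corresponding linear foliations $\mathcal{F}^\sigma_A$ of $A$, and conjugates the dynamics, so that $H\circ\overline f^k = A^k\circ H$ for every $k$.

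First I would prove part (1). Fix $k\in\mathbb{N}$ and $C>1$. Given $x,y$ on a common leaf (say unstable), write $\overline f^k x = H^{-1}A^k H x$ and similarly for $y$, so that $\overline f^k x - \overline f^k y = A^k(Hx-Hy) + e$, where the error $e$ collects the two applications of $H^{-1}-\mathrm{id}$ and is bounded in norm by $2R$ uniformly. Likewise $Hx-Hy = (x-y)+(Hx-x)-(Hy-y)$ differs from $x-y$ by at most $2R'$. Since $A^k$ is a fixed invertible linear map, $\|A^k(Hx-Hy)\|$ is comparable to $\|A^k(x-y)\|$ up to an additive bounded error. Thus both $\|\overline f^k x - \overline f^k y\|$ and $\|A^k x - A^k y\|$ agree with $\|A^k(x-y)\|$ up to a uniform additive constant. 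By quasi-isometry, $\|x-y\|$ bounds $d_{\mathcal{F}^u}(x,y)$ from above, and since the leaf is unstable $\|A^k(x-y)\|$ grows like $\lambda^u$ to the $k$ times the leafwise distance; in particular $\|A^kx-A^ky\|\to\infty$ as $\|x-y\|\to\infty$. Hence for $\|x-y\|$ large enough the additive errors are negligible relative to $\|A^kx-A^ky\|$, and the ratio in (1) is pinned inside $[1/C,C]$. Choosing $M$ large enough to absorb the fixed constants $R$, $R'$, $k$, and $C$ gives the claim; the argument for stable leaves is identical.

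For part (2), the idea is that along a leaf $\mathcal{F}^\sigma_{\overline f}(x)$ the direction $\tfrac{y-x}{\|y-x\|}$ must converge to the linear direction $E_A^\sigma$. Write $y-x = (Hy-Hx) - (Hy-y) + (Hx-x)$; the last two terms are bounded by $R'$, so $\tfrac{y-x}{\|y-x\|}$ and $\tfrac{Hy-Hx}{\|Hy-Hx\|}$ differ by $O(1/\|y-x\|)$ once $\|y-x\|\to\infty$ (note $\|Hy-Hx\|\to\infty$ too, since it differs from $\|y-x\|$ by at most $2R'$). But $Hx,Hy$ lie on a single leaf of the linear foliation $\mathcal{F}^\sigma_A$, which is an affine line in direction $E_A^\sigma$, so $\tfrac{Hy-Hx}{\|Hy-Hx\|}=\pm E_A^\sigma$ exactly. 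Uniformity follows because the error bound $2R'$ is independent of the leaf and of the points, and because quasi-isometry guarantees $\|y-x\|\to\infty$ whenever the leafwise distance does, at a rate uniform over all leaves.

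The main obstacle I anticipate is making the convergence in (2) genuinely uniform rather than merely pointwise along each leaf: one must rule out the possibility that, as leaves are varied, the threshold beyond which the direction is $\eps$-close to $E_A^\sigma$ drifts off to infinity. This is where the quasi-isometry constants $Q,b$ from Definition \ref{quasi isometric} do the real work, since they are global and leaf-independent, so the additive error $2R'$ translates into a direction error $O(2R'/\|y-x\|)$ with a constant that does not depend on the leaf. A secondary technical point is the orientation ambiguity in $E_A^\sigma$ (the sign of the limiting unit vector), which is harmless and can be handled by fixing an orientation of the leaf consistently, or by passing to the projective direction.
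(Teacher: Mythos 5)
Your argument is correct, but for item (2) it follows a genuinely different route from the paper. The paper proves (1) by observing that $f_{*}=A$ forces $\|\overline{f}^{k}-A^{k}\|_{0}<\infty$ on all of $\mathbb{R}^{2}$ (periodicity of the difference plus compactness of a fundamental domain), so both quantities differ by a uniform additive constant and one divides by $\|A^{k}x-A^{k}y\|\to\infty$; your version reaches the same additive-error estimate by routing through the lifted conjugacy $H$ with $\|H-\mathrm{id}\|_{\infty}$, $\|H^{-1}-\mathrm{id}\|_{\infty}$ bounded, which is essentially equivalent (and, like the paper's proof, needs no assumption that $x,y$ lie on a common leaf -- your restriction to a leaf is unnecessary, and the aside that $\|A^{k}(x-y)\|$ ``grows like $\lambda^{u}$ to the $k$'' is not needed nor quite justified; invertibility of $A$ alone gives $\|A^{k}x-A^{k}y\|\ge\|A^{-k}\|^{-1}\|x-y\|\to\infty$). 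For (2) the paper argues dynamically: it fixes $k$ large, uses quasi-isometry and the contraction rate along $\mathcal{F}^{s}_{\overline{f}}$ together with item (1) to show that $A^{k}$ contracts $x-y$ at nearly the rate $|\theta^{s}|^{k}$, and then invokes the hyperbolic splitting of $A$ to conclude $\|\pi^{u}_{A}(x-y)\|<\varepsilon\|\pi^{s}_{A}(x-y)\|$. You instead note that $H$ sends each leaf of $\mathcal{F}^{\sigma}_{\overline{f}}$ onto an affine line in direction $E^{\sigma}_{A}$, so $Hy-Hx$ lies exactly in $E^{\sigma}_{A}$ and $y-x$ deviates from it by at most $2R'$, giving $\mathrm{dist}\bigl(\tfrac{y-x}{\|y-x\|},E^{\sigma}_{A}\bigr)=O(1/\|y-x\|)$ uniformly. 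This is shorter, makes the uniformity transparent, and does not even need the quasi-isometry constants; its only cost is that it leans on the (standard, and already invoked in the preceding lemma) facts that the lifted conjugacy exists at bounded distance from the identity and carries stable/unstable sets of $\overline{f}$ to those of $A$, whereas the paper's argument uses only $f_{*}=A$ and the quasi-isometry of the lifted foliations. Your closing remarks on uniformity and on the sign/projective ambiguity of the limit are the right points to flag and are handled adequately.
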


\begin{proof}
The proof is in the lines of \cite{H} and we repeat for completeness. Let $K$ be a fundamental domain of $\mathbb{T}^2$ in $\mathbb{R}^2.$ Restricted to $K $ it is true that

$$||\overline{f}^k -  A^k|| < +\infty, $$
for $\overline{x} \in \mathbb{R}^2,$ there are $x \in K$ and $\overrightarrow{n} \in \mathbb{Z}^2,$ such that $\overline{x}  = x + \overrightarrow{n}, $ since $f_{\ast} = A,$ we obtain:

$$ ||\overline{f}^k(\overline{x}) - A^k(\overline{x})|| = ||\overline{f}^k(x + \overrightarrow{n}) - A^k(x +\overrightarrow{ n})|| = ||\overline{f}^k(x) +A^k \overrightarrow{n} - A^kx - A^k\overrightarrow{n}  || < +\infty. $$

Now, for every $x, y \in \mathbb{R}^2,$

$$||\overline{f}^k x - \overline{f}^ky|| \leq ||A^k x - A^k y|| + 2||\overline{f}^k - A^k||_0$$

$$||A^k x - A^ky|| \leq ||\overline{f}^k x - \overline{f}^k y|| + 2||\overline{f}^k - A^k||_0,$$
where,

 $$||\overline{f}^k - A^k||_0 = \max_{x \in K}\{||\overline{f}^k(x) - A^k(x)||\}.$$

Since $A$ is non-singular,  if $||x -  y|| \rightarrow + \infty,$ then $||A^kx - A^k y|| \rightarrow + \infty.$

So dividing both expressions by $||A^kx - A^k y|| $ and doing $||x -  y|| \rightarrow + \infty$ we obtain the proof of the first item.

For the second item, we just consider the case of stable sub bundles. For unstable sub bundles, we  consider $A^{-1}$ and $(\overline{f})^{-1}$  and same proof holds.

Let $|\theta^s| = \max\{\,|\theta| \;| \; \theta\; \mbox{is eigenvalue of $A$ and}\; 0 < |\theta| < 1 \}.$  Fix a small $\varepsilon > 0$ and consider $\delta > 0,$ such that $0 < (1+ 2\delta)|\theta^s| < 1.$ If $f$ is sufficiently $C^1-$close to $A,$ then $\overline{f}$  is an Anosov diffeomorphism  on $\mathbb{R}^2$ with contracting constant less than $(1+ \delta)|\theta^s|.$

Using the hyperbolic splitting, there is $k_0 \in \mathbb{N},$ such that if $v \in \mathbb{R}^2,$  $k > k_0$ and

$$||A^k v || < (1 + 2\delta)^k |\theta^s|^k ||v||,$$
then

$$||\pi^u_A(v)|| < \varepsilon ||\pi^s_A(v)||.$$

Consider $k > k_0$ and $M$ sufficiently large, satisfying  the first item  with $C = 2$ and in accordance with remark \ref{remarkquasi}.

Take $y \in \mathcal{F}^s_{\overline{f}}(x)$ and $||x - y|| > M.$ Let $d^s$ to denote the Riemannian distance on stable leaves of $\mathcal{F}^s_{\overline{f}},$ using quasi isometry property of the foliation $\mathcal{F}^s_{\overline{f}},$ we get

$$d^s(\overline{f}^k x, \overline{f}^k y) <  ((1 + \delta)|\theta^s|)^k d^s(x,y) \Rightarrow $$
$$ ||\overline{f}^k x - \overline{f}^k y|| <  ((1 + \delta)|\theta^s|)^k (Q^ {-1}|| x - y||) \Rightarrow $$
$$ ||A^k x - A^k y|| < 2 ((1 + \delta)|\theta^s|)^k (Q^ {-1}|| x - y||).$$

Finally,  for large $k$ we obtain:

$$2Q^{-1} ((1 + \delta)|\theta^s|)^k  \leq ((1 + 2\delta) |\theta^s|)^k,$$
So,

$$||\pi^u_A(x - y)|| < \varepsilon ||\pi^s_A(x- y)||.$$

\end{proof}

\begin{lemma} \cite{MT} \label{linalg}
 Let $f : \mathbb{T}^2 \rightarrow \mathbb{T}^2$ be an Anosov endomorphism with linearization  $A: \mathbb{T}^2 \rightarrow \mathbb{T}^2,$ such that $dim E^u_A = 1.$ Then   for all $n \in \mathbb{N}$ and $\varepsilon > 0$ there exists $M$ such that for $x, y$ with $y \in \mathcal{F}^u_{\overline{f}}(x)$ and $||x -  y||> M$ then
 $$
   (1 - \varepsilon)e^{n\lambda^{u}_A } ||y -x|| \leq \|A^n(x) - A^n(y)\| \leq (1 + \varepsilon)e^{n\lambda^{u}_A } ||y -x||
 $$
where $\lambda^{u}$ is the Lyapunov exponent of $A$ corresponding to $E^{u}_A.$
 \end{lemma}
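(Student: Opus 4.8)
The plan is to reduce the estimate to the linear action of $A$ on its eigenlines, using Corollary~\ref{nice}(2) to control the direction of the separation vector $x-y$. Since the lift of $A$ is linear we have $A^n(x)-A^n(y)=A^n(x-y)$, so it suffices to estimate $\|A^n(x-y)\|$ in terms of $\|x-y\|$. Because $\dim E^u_A=1$ and $A$ is irreducible over $\mathbb{Q}$, on $\mathbb{T}^2$ both $E^u_A$ and $E^s_A$ are $A$-invariant lines, with $A|_{E^u_A}$ multiplication by an eigenvalue of modulus $e^{\lambda^u_A}>1$ and $A|_{E^s_A}$ multiplication by an eigenvalue of modulus $e^{\lambda^s_A}<1$. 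Let $\pi^u_A,\pi^s_A$ be the (oblique) projections onto $E^u_A,E^s_A$ associated with this splitting. Since the splitting is $A$-invariant these projections commute with $A$, and for every $v\in\mathbb{R}^2$ one has the exact identities $\|A^n\pi^u_A v\|=e^{n\lambda^u_A}\|\pi^u_A v\|$ and $\|A^n\pi^s_A v\|=e^{n\lambda^s_A}\|\pi^s_A v\|\le\|\pi^s_A v\|$.

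Next I would convert the directional convergence into quantitative control on the two components of $x-y$. Fix $\varepsilon>0$ and set $\delta=\varepsilon/2$. By Corollary~\ref{nice}(2) the unit vector $u=(x-y)/\|x-y\|$ tends to $E^u_A$ uniformly as $\|x-y\|\to+\infty$ along unstable leaves of $\overline f$; since $\pi^u_A,\pi^s_A$ are continuous with $\pi^u_A|_{E^u_A}=\mathrm{id}$ and $\pi^s_A|_{E^u_A}=0$, there exists $M>0$ (depending on $\delta$, hence on $\varepsilon$) such that $y\in\mathcal{F}^u_{\overline f}(x)$ and $\|x-y\|>M$ force
\[
(1-\delta)\|x-y\|\le\|\pi^u_A(x-y)\|\le(1+\delta)\|x-y\|,\qquad \|\pi^s_A(x-y)\|\le\delta\|x-y\|.
\]

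Finally I would combine the two ingredients by the triangle inequality. Writing $a=\pi^u_A(x-y)$ and $b=\pi^s_A(x-y)$, and using $e^{n\lambda^u_A}\ge 1$ and $e^{n\lambda^s_A}\le 1$,
\[
\|A^n(x-y)\|\le\|A^na\|+\|A^nb\|\le e^{n\lambda^u_A}(1+\delta)\|x-y\|+\delta\|x-y\|\le(1+2\delta)e^{n\lambda^u_A}\|x-y\|,
\]
\[
\|A^n(x-y)\|\ge\|A^na\|-\|A^nb\|\ge e^{n\lambda^u_A}(1-\delta)\|x-y\|-\delta\|x-y\|\ge(1-2\delta)e^{n\lambda^u_A}\|x-y\|,
\]
where in both lines I absorbed $\delta\|x-y\|\le\delta\,e^{n\lambda^u_A}\|x-y\|$. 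Since $2\delta=\varepsilon$, this is exactly the claimed two-sided bound. The only genuinely delicate point is the passage in the second paragraph: one must verify that the \emph{uniform} directional convergence of Corollary~\ref{nice}(2) yields a single threshold $M$ controlling both projections simultaneously. Once that is in hand, the remaining estimates are forced by the exact scaling of $A^n$ on the invariant lines; in fact the argument shows $M$ may even be taken independent of $n$, although the statement only requires dependence on $n$ and $\varepsilon$.
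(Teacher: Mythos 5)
Your proof is correct and follows essentially the same route as the paper's: both reduce the estimate to the linear action of $A^n$ on the vector $x-y$ and invoke Corollary~\ref{nice}(2) to control the direction of $x-y$ for $\|x-y\|>M$. The only difference is a minor sharpening: you split $x-y$ along the invariant splitting $E^u_A\oplus E^s_A$ and use the exact eigenvalue scaling on each component, which lets $M$ be chosen independently of $n$, whereas the paper writes $(x-y)/\|x-y\|=v+e_M$ and bounds the error by $\|A\|^n\|e_M\|$, so it must choose $M$ after fixing $n$ (which is all the statement requires).
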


\begin{proof} Denote by $E^{u}_A$ the eigenspace corresponding to $\lambda^{u}_A$ and $|\mu| = e^{\lambda^{u}_A},$ where $\mu$ is the eigenvalue of $A$ in the $E^{u}_A$ direction.

Let  $N \in \mathbb{N}$ and choose $x, y \in \mathcal{F}^{u}_f(x),$ such that $|| x - y || > M.$ By corollary \ref{nice},we obtain

$$ \frac{x - y}{|| x - y||} = v + e_M,$$
where the vector $v = v_{E^{u}_A}$ is a unitary eigenvector of  $A,$ in the  $E^{u}_A$ direction and  $e_M$ is a correction vector that converges to zero uniformly as  $M$ goes to  infinity. It leads us to

$$A^N \left( \frac{x - y}{|| x - y||} \right) = \mu^N v + A^N e_M = \mu^N \left(\frac{x - y}{|| x - y||} \right) -\mu^N e_M  + A^N e_M  $$
it implies that

\begin{align*} || x - y || (|\mu|^N - |\mu|^N ||e_M|| - ||A||^N || e_M||) \leq   || A^N (x - y)|| \\ \leq || x - y || (|\mu|^N + |\mu|^N ||e_M|| + ||A||^N || e_M||).
\end{align*}

Since $N$ is fixed, we can choose  $M > 0,$ such that

$$ |\mu|^N ||e_M|| + ||A||^N || e_M|| \leq \varepsilon |\mu|^N.$$
and the lemma is proved. \end{proof}


\begin{lemma} Let $f $ be a special Anosov endomorphism  and $A$ its linearization such that $A$ is irreducible over $\mathbb{Q}$ with $\dim E^u_A = \dim E^u_f = 1. $ If there is a unstable leaf meeting $Z$ in a positive volume set of the leaf, then $\lambda^u_f(\widetilde{m}) =  \lambda^u_A.$
\end{lemma}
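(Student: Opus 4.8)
The plan is to pin down $\beta:=\lambda^u_f(\widetilde{m})$ by sandwiching the exponential growth rate of unstable arc-length along the privileged leaf. Since $\widetilde{m}$ is ergodic, for $x\in Z$ the one-dimensionality of $E^u_f$ gives $\lambda^u_f(x)=\lim_n\frac{1}{n}\log J^uf^n(x)=\lim_n\frac{1}{n}\sum_{k=0}^{n-1}\log J^uf(f^k(x))=\beta$. Let $L$ be the unstable leaf with $\mathrm{Leb}_L(Z\cap L)>0$; I would fix a compact arc $\gamma\subset L$ with $\mathrm{Leb}(Z\cap\gamma)>0$, lift it to $\overline{\gamma}$ on an unstable leaf of $\overline{f}$, and write $\ell_n$ for the arc-length of $f^n(\gamma)$ (equal to that of $\overline{f}^n(\overline{\gamma})$, the projection being a local isometry). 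The goal is to prove $\liminf_n\frac{1}{n}\log\ell_n\ge\beta$ and $\lim_n\frac{1}{n}\log\ell_n=\lambda^u_A$, forcing $\beta\le\lambda^u_A$, and then to obtain the reverse inequality from entropy.

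For the geometric rate, I would work with a lift $H$ of $h$, so that $H\circ\overline{f}=A\circ H$ and $\|H-\mathrm{id}\|_\infty,\|H^{-1}-\mathrm{id}\|_\infty<R$. Because $h$ maps unstable leaves of $f$ to unstable leaves of $A$, the images $H(\overline{x}_0),H(\overline{x}_1)$ of the endpoints of $\overline{\gamma}$ lie on one unstable line of $A$, so $H(\overline{x}_1)-H(\overline{x}_0)\in E^u_A$ and $A^n$ scales it exactly by $e^{n\lambda^u_A}$. Using $H\circ\overline{f}^n=A^n\circ H$ and the bound on $\|H^{-1}-\mathrm{id}\|$,
\[
\big|\,\|\overline{f}^n(\overline{x}_0)-\overline{f}^n(\overline{x}_1)\|-e^{n\lambda^u_A}\|H(\overline{x}_0)-H(\overline{x}_1)\|\,\big|\le 2R,
\]
while quasi-isometry of $\mathcal{F}^u_{\overline{f}}$ gives $\|\overline{f}^n(\overline{x}_0)-\overline{f}^n(\overline{x}_1)\|\le\ell_n\le Q^{-1}\|\overline{f}^n(\overline{x}_0)-\overline{f}^n(\overline{x}_1)\|+b$. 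As the separation of the endpoints tends to infinity, the additive constants are negligible on the exponential scale and I obtain $\lim_n\frac{1}{n}\log\ell_n=\lambda^u_A$.

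For the lower bound I would use $\ell_n=\int_\gamma J^uf^n\,ds$ (as $\gamma'$ is a unit unstable vector). Applying Egorov's theorem to $\frac{1}{n}\log J^uf^n$ on $Z\cap\gamma$, where it converges to $\beta$, yields a subset $Z'\subset Z\cap\gamma$ of positive arc-length on which the convergence is uniform; hence for each $\varepsilon>0$ and all large $n$ one has $J^uf^n\ge e^{n(\beta-\varepsilon)}$ on $Z'$, so $\ell_n\ge e^{n(\beta-\varepsilon)}\,\mathrm{Leb}(Z')$ and $\liminf_n\frac{1}{n}\log\ell_n\ge\beta$. This gives $\beta\le\lambda^u_A$. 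The reverse inequality needs no leaf hypothesis: since $h$ is a Borel isomorphism with $h_*\widetilde{m}=m$ intertwining $f$ and $A$, measure-theoretic entropy is preserved, $h_{\widetilde{m}}(f)=h_m(A)$, and Theorem \ref{pesin1} applied to the absolutely continuous $m$ gives $h_m(A)=\lambda^u_A$; Ruelle's inequality then yields $\lambda^u_A=h_{\widetilde{m}}(f)\le\lambda^u_f(\widetilde{m})=\beta$. Combining, $\beta=\lambda^u_A$.

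The hard part will be the geometric estimate: the constants in Corollary \ref{nice} and Lemma \ref{linalg} depend on $n$, so I route the argument through the conjugacy lift $H$, which is a uniformly bounded distance from the identity and turns the separation of the endpoints into the exact linear expansion $e^{n\lambda^u_A}$ of $A$ along its unstable lines; the only remaining care is to verify that the quasi-isometry additive constant $b$ does not perturb the exponential rate, which is immediate once the endpoint separation is unbounded.
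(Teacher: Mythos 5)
Your argument is correct, and its skeleton is the same as the paper's: both proofs play the linear growth rate $e^{n\lambda^u_A}$ of the Euclidean separation of the endpoints of an unstable arc against the lower bound $\int_{\gamma}J^uf^n\,ds$ coming from the positive-measure intersection with $Z$, and both get $\lambda^u_f(\widetilde{m})\ge\lambda^u_A$ from $h_{\widetilde{m}}(f)=h_m(A)=\lambda^u_A$ plus Ruelle's inequality. Where you genuinely diverge is in how the rate $e^{n\lambda^u_A}$ for $\|\overline{f}^n(\overline{x}_0)-\overline{f}^n(\overline{x}_1)\|$ is established. The paper runs an induction based on Lemma \ref{linalg} and Corollary \ref{nice}, accumulating multiplicative errors $(1+\varepsilon)^{2n}$ (which requires keeping every iterate of the arc longer than the threshold $M$) and then derives a contradiction against the integral bound over the increasing sets $Z_n$. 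You instead push the endpoints through the lifted conjugacy $H$: since $H\circ\overline{f}^n=A^n\circ H$, $H$ carries the lifted unstable leaf onto an affine line parallel to $E^u_A$, and $\|H^{-1}-\mathrm{id}\|_\infty<R$, the separation equals $e^{n\lambda^u_A}\|H(\overline{x}_0)-H(\overline{x}_1)\|$ up to the additive constant $2R$, which is negligible on the exponential scale. This buys a direct sandwich with no induction and no $\varepsilon$-bookkeeping, and your Egorov step is just a cleaner packaging of the paper's $Z_n\uparrow Z$ argument. The only extra input is that $H$ maps lifted unstable leaves of $\overline{f}$ to unstable lines of $A$ — but the paper already invokes exactly this fact to prove quasi-isometry of $\mathcal{F}^u_{\overline{f}}$, so you are not assuming anything the paper does not. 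One small point worth writing out in a final version: justify that $\|\overline{f}^n(\overline{x}_0)-\overline{f}^n(\overline{x}_1)\|\to\infty$ (uniform expansion along the leaf plus quasi-isometry), since that is what lets you discard both the additive constant $b$ in the quasi-isometry bound and the $2R$ from the conjugacy.
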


\begin{proof}
 Let us to prove first $\lambda^u_f(\widetilde{m}) \leq \lambda^u_A.$ Suppose that  $\lambda^u_f(\widetilde{m}) >  \lambda^u_A ,$ so we can choose $\varepsilon > 0$ such that    $\lambda^u_{f}(\widetilde{m}) > (1 + 5 \varepsilon) \lambda^u_A,$ for a small $\varepsilon > 0.$

\begin{equation}
 m^u_x(\mathcal{F}^u_{\overline{f}}(x) \cap Z) > 0
\label{1}
\end{equation}
where $m^u_x$ is the Lebesgue measure of the leaf $\mathcal{F}^u_{\overline{f}}(x)$.
Consider an  interval $[x,y]_u \subset \mathcal{F}^u_{\overline{f}}(p) $ satisfying
 $m^u_p([x,y]_u \cap Z) > 0$ such that the length of $[x,y]_u$ is bigger than $M$ as required in the lemma \ref{linalg} and corollary \ref{nice}. We can choose $M$ such that

$$||Ax - Ay|| < (1 + \varepsilon)e^{\lambda^u_A } ||y -x|| $$
and

$$\frac{|| \overline{f}(x) - \overline{f}(y)|| }{ ||Ax - Ay||} < 1 + \varepsilon. $$
whenever
 $d^u(x, y) \geq M,$ where $d^u$ denotes the Riemannian distance in unstable leaves. The above equation implies that $$ || \overline{f}(x) -\overline{f}(y)|| < (1+ \varepsilon)^2 e^{\lambda^u_A} || y - x||.$$

Inductively, we assume that for  $n \geq 1,$ then

\begin{equation}
|| \overline{f}^n(x) - \overline{f}^n(y)||< (1+\varepsilon)^{2n} e^{n \lambda^u_A }|| y - x||. \label{induction}
\end{equation}

Since $f$ expands uniformly  on the $u-$direction we obtain $d^u(\overline{f}^n(x), \overline{f}^n(y)) > M,$ consequently

\begin{eqnarray*}
||\overline{ f}(\overline{f}^nx) - \overline{f}(\overline{f}^ny)|| &<& (1+\varepsilon)|| A(\overline{f}^nx) - A(\overline{f}^ny)|| \\  &<& (1 + \varepsilon)^2 e^{\lambda^u_A} || \overline{f}^nx - \overline{f}^n y||\\ &<&
 (1+\varepsilon)^{2(n+1)} e^{(n+1)\lambda^u_A}.
\end{eqnarray*}

For each $n > 0,$ let $Z_n \subset Z$ be the following set

$$Z_n = \{ x \in Z \colon\;\; \|D\overline{f}^k(x)|E^u_{\overline{f}}(x) \| > (1+2\varepsilon)^{2k} e^{k\lambda^u_A} \;\; \mbox{for any} \;\; k \geq n\}. $$
At this point $m(Z) > 0$ and $Z_n := (Z_n \cap Z) \uparrow Z,$ as $(1 + 5 \varepsilon) > (1 + 2\varepsilon)^2,$ for small $\varepsilon > 0.$

Define the number $\alpha_0 > 0$ such that:

$$\displaystyle\frac{m^u_p([x,y]_u \cap Z)}{m^u_p([x,y]_u)} = 2 \alpha_0.$$

Since $Z_n \cap [x,y]_u \uparrow Z \cap [x,y]_u, $ there is $n_0 \in \mathbb{N} ,$ such that $n \geq n_0,$ then

 $$ m^u_p ([x,y]_u \cap Z_n) = \alpha_n \cdot m^u_p([x,y]_u),$$
for $\alpha_n > \alpha_0.$

Thus, for $n \geq n_0:$

\begin{eqnarray}
||\overline{f}^nx - \overline{f}^ny || &>&  Q \displaystyle\int_{[x,y]_u \cap Z_n} ||Df^n(z)|| dm^u_p(z) >  \\ &>&
 Q (1+ 2\varepsilon)^{2n}
e^{n \lambda_A^u } m^u_p ([x,y]_u \cap Z_n) \\ &>& \alpha_0 Q^2 (1 + 2\varepsilon)^{2n} e^{n\lambda^u_A} \|x-y\|. \label{conclusion}
\end{eqnarray}
The inequalities $(\ref{induction})$ and $(\ref{conclusion})$ give a contradiction. We conclude $\lambda^u_f(\widetilde{m}) \leq \lambda^u_A.$

It remains to prove that $ \lambda^u_f(\widetilde{m}) \geq \lambda^u_A. $ In fact
$$\lambda^u_A = h_{m}(A) = h_{\widetilde{m}}(f) \leq \lambda^u_f(\widetilde{m}), $$
the last inequality follows from Ruelle's inequality. Particularly $ h_{\widetilde{m}}(f) = \lambda^u_f(\widetilde{m}),$ so $\widetilde{m}$ satisfies the Pesin formula, that means that it is a SRB measure for $f.$
\end{proof}

\begin{lemma}\label{lemasmooth} Let $f$ and $A$ be as Theorem \ref{teo1} and suppose that $\lambda^u_f(\widetilde{m}) = \lambda^u_A.$ Then $f$ and $A$ are smoothly conjugated.
\end{lemma}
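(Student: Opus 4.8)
The plan is to prove that $h$ is a $C^{\infty}$ diffeomorphism by showing it is uniformly smooth along both the unstable and the stable foliations and then invoking the Journ\'e regularity lemma. Two preliminary observations organize the argument. First, since $f$ and $A$ are conjugate, $f$ is special by the theorem of Mo--Tang \cite{MoTa}; hence the bundles $E^s_f,E^u_f$ and the leaves $\mathcal{F}^s_f,\mathcal{F}^u_f$ are genuinely defined on $\mathbb{T}^2$ (independent of the past orbit), and $h$ carries $\mathcal{F}^\sigma_f$ homeomorphically onto $\mathcal{F}^\sigma_A$ for $\sigma\in\{s,u\}$. Second, $\widetilde{m}$ is the SRB measure of $f$: by the computation closing the previous lemma, $\lambda^u_A=h_m(A)=h_{\widetilde{m}}(f)\le \lambda^u_f(\widetilde{m})$, and the hypothesis $\lambda^u_f(\widetilde{m})=\lambda^u_A$ forces equality, so $\widetilde{m}$ satisfies Pesin's formula $(\ref{PesinU})$ and therefore has the SRB property. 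Consequently, by the final lemma of the preliminaries, the conditional measures of $\widetilde{m}$ along unstable leaves have smooth, strictly positive densities $\rho^u_f$.

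The heart of the matter is smoothness of $h$ along unstable leaves. On the linear side, $m$ is the SRB measure of $A$, and because $A$ is linear its unstable conditionals are uniform, i.e.\ leafwise Lebesgue. Since $h_*\widetilde{m}=m$ and $h$ maps $\mathcal{F}^u_f$ onto $\mathcal{F}^u_A$, the image under $h$ of the disintegration of $\widetilde{m}$ is a disintegration of $m$ along $\mathcal{F}^u_A$; by essential uniqueness of conditional measures it coincides almost everywhere with the uniform one. Thus for $\widetilde{m}$-a.e.\ leaf the restriction $h|_{\mathcal{F}^u_f(x)}$ pushes a measure with smooth positive density onto leafwise Lebesgue, so it is a $C^\infty$ diffeomorphism whose derivative is, up to a normalizing constant, the density $\rho^u_f$. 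I would then upgrade this from a.e.\ leaf to every leaf: because $\rho^u_f$ is continuous everywhere, the unstable foliation is continuous, and $h$ is a homeomorphism, the leafwise formula for the derivative and the resulting uniform $C^\infty$ bounds extend by continuity to all leaves, giving that $h$ is uniformly $C^\infty$ along $\mathcal{F}^u_f$.

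For the stable direction I would use specialness directly. By Theorem \ref{shi}, $\lambda^s_f(p)=\lambda^s_A$ for every $p\in\Per(f)$ and $h$ is already $C^{1+\alpha}$ along stable leaves; since the stable bundle is one dimensional and the periodic data coincide, de la Llave's smoothing theorem \cite{Llave92} upgrades $h$ to a uniformly $C^\infty$ map along $\mathcal{F}^s_f$. Now $h$ is uniformly $C^\infty$ along the two transverse continuous foliations $\mathcal{F}^s_f$ and $\mathcal{F}^u_f$, whose leaves are $C^\infty$, so the Journ\'e regularity lemma yields that $h$ is globally $C^\infty$. Finally $Dh$ maps the hyperbolic splitting of $f$ to that of $A$ with nonvanishing leafwise derivatives, hence is invertible; the same reasoning applies to $h^{-1}$, so $h$ is a diffeomorphism and $f$ and $A$ are smoothly conjugate.

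I expect the main obstacle to be the unstable step. The delicate points are reconciling the SRB conditional measures of $\widetilde{m}$, which are defined through the inverse limit $M^f$, with the genuine unstable disintegration of $m$ under $h$ using the special structure, and then promoting the almost-everywhere leafwise smoothness to smoothness on \emph{every} leaf with uniform $C^\infty$ bounds, which is precisely the hypothesis that Journ\'e's lemma requires. The stable-leaf regularity and the final assembly are comparatively routine, resting on Theorem \ref{shi} and standard one-dimensional smoothing.
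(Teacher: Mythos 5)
Your proposal follows essentially the same route as the paper: identify $\widetilde{m}$ as the SRB measure via equality in Pesin's formula, use $h_*\widetilde{m}=m$ to match the smooth unstable conditional densities and deduce leafwise smoothness of $h$ along $\mathcal{F}^u_f$ (the paper implements the a.e.-to-every-leaf upgrade you flag by iterating an arc along a dense orbit through increasing subordinate partitions and applying Arzel\`a--Ascoli), then invoke Theorem \ref{shi} for the stable direction. The only divergence is cosmetic: you finish with de la Llave's one-dimensional smoothing plus Journ\'e, while the paper concludes by matching periodic data and citing Theorem B of \cite{micendo}.
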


\begin{proof}
 Consider $V$ a small local open neighborhood of $\mathbb{T}^2$ foliated by  $\mathcal{F}^u_f.$  First we note that $\widetilde{m}$ is ergodic and since $h$ is continuous, $\widetilde{m}$ is supported on $\mathbb{T}^2.$ So $\widetilde{m}(U) > 0$ for any non empty open set $U.$ So by ergodicity, $\widetilde{m}$ almost every where $x$ has positive dense orbit.

We can choose an $f-$orbit $\tilde{x} = (x_n)_{n \in \mathbb{Z}}$ is such that $(x_n)_{n \geq 0} $ is dense and  there are atoms $\eta_k(\tilde{f}^k(\tilde{x})), k \geq 0,$ for which are defined the $C^{\infty}-$conditional measure $\rho^u_f,$ for  subordinate partitions w.r.t unstable leaves. In fact, for the partition $\eta_k = \tilde{f}^k(\eta),$ where $\eta$ is any subordinate partition w.r.t unstable leaves, let $\mu$ be the unique measure such that $p_{\ast}\mu = \widetilde{m}.$ Consider the $\mu-$full measure set of points $X_k,$ of points satisfying $(\ref{conditionalU}).$ Now take $X = \bigcap_{k=0}^{+\infty} X_k,$ and finally $\mathcal{T} = \bigcap_{j=0}^{+\infty}\tilde{f}^{-j} X .$ The projection on $\mathbb{T}^2$ of $\mathcal{T}$ has $\widetilde{m}-$full measure, by ergodicity and knowing $supp(\widetilde{m}) = \mathbb{T}^2,$ we can choose the orbit.

Since $h_{\ast}(\widetilde{m}) = m,$ then  $h$ sends conditional measures of $(f, \widetilde{m})$ in conditional measures of $(A, m).$ Since these measures are equivalent to Riemannian measures of unstable leaves, so $h$ sends null sets of $p(\eta(\tilde{x}))$ in null sets of $p(\eta(\tilde{h}(\tilde{x})))$ with respect to  Riemannian measures of unstable leaves, where $\tilde{h}$ is the conjugacy at level of limit inverse space between $\tilde{f}$ and $\tilde{A}.$

Consider $B^u_{x_0} \subset \eta(\tilde{x}) $ a small open unstable arc. Since $h$ is absolutely continuous
$$\int_{B^u_{x_0}} \rho^u_f(y) dy  = \int_{h({B^u_{x_0}})} \rho^u_A(y) dy = \int_{B^u_{x_0}} \rho^u_A(h(y)) h'(y) dy,  $$
therefore solving the O.D.E.
\begin{equation}\label{ODE}
x' = \frac{\rho^u_f(t)}{\rho^u_A(x)}, x(x_0) =  h(x_0) ,
\end{equation}
We conclude that $h$ is $C^{\infty}$ on $B^u_{x_0}.$

Given $z_0 \in V,$ since $\{f^n(x_0), n \geq 0\}$ is dense in $\mathbb{T}^2,$ there is a sequence of iterated of $f^{n_k}(x_0), n_k \geq 0$ such that $f^{n_k}(x_0) \rightarrow z_0.$ Since $x_0$ lies in the interior of $B^u_{x_0},$ by Theorem 1.12 of \cite{PRZ}, up to take a subsequence we can suppose that the sequence connected components of the arcs  $W_n \subset f^n(B^u_{x_0}) \cap V$ containing $f^n(x_0)$ is such that  $W_n \rightarrow_{C^1} W_{z_0},$ where $\mathcal{F}^u_f(z_0)$ is the connected component of $\mathcal{F}^u_f(z_0) \cap V$ containing $z_0.$

Since the subordinate partition can be taken increasing, see Proposition 3.2 of \cite{QZ}, the conjugacy $h$ restricted to $W_n$ satisfies an analogous O.D.E, as in $(\ref{ODE}).$

Normalizing the conditional measures such that $$ \int_{W_n} c_n\cdot \rho^u_f(t) dVol_{W_n} = 1,$$ since $h_{\ast}(\rho^u_f(t) dVol_{W_n}) = \rho^u_A(t) dVol_{h(W_n)},$ then $h$ send normalized conditional measures into normalized conditional measures. To avoid carrying $c_n,$ since the same constant works for $f$ and $A,$ for simplicity we consider $c_n=1.$

For any $y \in W_n,$ take the initial condition $y_0 = f^n(x_0),$ we get $$\rho^u_f(y) = \alpha_n \cdot\Delta^u(y_0, y),$$ for some constant $\alpha_n.$ Since $V$ can be taken with compact closure, the sequence of $|\alpha_n|$ is bounded and far from zero. For $A$ there are corresponding constants $\beta_n$ with same properties, so $\frac{|\alpha_n|}{|\beta_n|}$ is also bounded and far from zero.  For simplicity of writing let us consider constant $\alpha_n = \beta_n.$

In this way, by relation \eqref{conditionalU},  $h$ satisfies the following O.D.E,
$$x' = \frac{\Delta^u_f( y_0 , t)}{\Delta^u_A( h(y_0), x)}, x(y_0) =  h(y_0), \Leftrightarrow x' = \Delta^u_f(t), x(y_0) =  h(y_0). $$
for each pair of connected component $W_n$ and  $h(W_n).$

Denoting by $h_n$ the solution of the above equation, we note that the solution $h_n$ is smooth. The map $h_n$ is the restriction of the conjugacy $h$ on $W_n.$ Analogous to Lemma 4.3 of \cite{Llave92}, for each component $W_n$ we have a collection $\{h_n: W_n \rightarrow h(W_n)\}_{n = 1}^{\infty},$  is uniform bounded as well the collection of their derivatives of order $r = 1,2,\ldots.$   By an Arzela-Ascoli argument type applied to a sequence $h_n$ and the sequence of their derivatives, we conclude that $h$ is $C^{\infty}$ restricted to $\mathcal{F}^u_f(z_0),$ if $x_n \rightarrow z_0.$

We conclude that $h$ is smooth on the leaves of $\mathcal{F}^u_f.$ By Theorem \ref{shi}, since $f $ is special, also $h$ is $C^{1+\alpha}$ on the leaves of $\mathcal{F}^s_f.$ So $f$ and $A$ have same periodic data, from Theorem B of \cite{micendo}, $f$ and $A$ are smoothly conjugated, consequently $Z = \mathbb{T}^2.$

\end{proof}

\section{Proof of Theorem \ref{teo2}}

\begin{proof}
For any periodic points $p$ and $q$ of $f$ holds
$$ \lambda^s_f(p) = \lambda^s_f(q), \lambda^u_f(p) = \lambda^u_f(q),$$

By Theorem 5.1 of \cite{shi}, the map $f$ is a special Anosov endomorphism. Since  $\lambda^u_f(p) = \lambda^u_f(q) = \lambda^u_f$ for any $p,q \in Per(f),$ by Livsic's Theorem  $\lambda^u_f(x) = \lambda^u_f.$ Finally Theorem \ref{teo1} implies that $f$ and $A$ are smoothly conjugated.
\end{proof}

To prove the Corollary \ref{cor} we will use the specification property. Since $f$ has specification property (see \cite{SY}), then for any periodic points $p$ and $q$ of $f$
$$ \lambda^s_f(p) = \lambda^s_f(q), \lambda^u_f(p) = \lambda^u_f(q),$$
for a proof of these identities, we refer to \cite{micendo}.

By Theorem \ref{teo2} the endomorphisms $f$ and $A$ are smoothly conjugated.

\section{Proof of Theorem \ref{teoplus} }

\begin{lemma}\label{lema2} Let $f: M  \rightarrow M$ be a transitive Anosov endomorphism with degree $k \geq 1$ where $M,$ is  a $C^{\infty}$ compact and connect Riemannian manifold. Then $f$ preserves a $C^1$ volume form $m$ on $M$ if and only if $Jf^n(p) = k^n,$ for any $p \in M$ such that $f^n(p) = p,$ with $n \geq 1.$
\end{lemma}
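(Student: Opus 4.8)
The plan is to recast both implications as the single cohomological statement that $\log Jf$ is cohomologous to the constant $\log k$, reading the periodic--data condition $Jf^n(p)=k^n$ as exactly the vanishing of the periodic sums of $\log Jf-\log k$. Throughout I write a candidate $C^1$ volume form as $m=\rho\,\vol$ with $\rho>0$ of class $C^1$, and I record that invariance $f_\ast m=m$ is equivalent, after testing against functions and splitting over the $k$ local inverse branches of $f$, to the transfer identity
\[ \sum_{f(x)=y}\frac{\rho(x)}{Jf(x)}=\rho(y)\qquad\text{for all }y. \]
Along any periodic orbit the density factors telescope, so $\prod_{i=0}^{n-1}\big(\rho(f^{i+1}p)/\rho(f^{i}p)\big)=1$; hence the $m$--Jacobian and the Riemannian Jacobian $Jf^n$ coincide on periodic orbits, which is the bookkeeping that makes the two sides comparable.

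For the implication $(\Leftarrow)$ I argue directly. If $Jf^n(p)=k^n$ for every periodic $p$, then $\log Jf-\log k$ has vanishing sum over every periodic orbit, so the Livsic theorem for transitive Anosov endomorphisms yields a continuous transfer function $u$ with $\log Jf-\log k=u\circ f-u$; as $f$ is $C^\infty$ and the data is smooth, the regularity theory underlying the preceding lemma (de la Llave--Marco--Mori\'yon, as in Corollary 4.4 of \cite{Llave92}) upgrades $u$ to a $C^1$, indeed smooth, function. Setting $\rho=e^{-u}$ one verifies the transfer identity: each summand equals $\rho(x)/Jf(x)=e^{-u(x)}/\big(k\,e^{u(y)-u(x)}\big)=\rho(y)/k$, and summing over the $k$ preimages of $y$ returns $\rho(y)$. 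Thus $m=e^{-u}\,\vol$ is a $C^1$ volume form preserved by $f$.

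For $(\Rightarrow)$ the route is through SRB theory. If $f$ preserves the $C^1$ form $m$ then $m\ll\vol$, so by Theorem \ref{pesin1} Pesin's formula holds for $m$, and by Theorem \ref{pesin2} $m$ has the SRB property, i.e.\ it is the forward SRB measure. Since $m$ is equivalent to Lebesgue, absolute continuity of the stable foliation forces the stable conditionals of $m$ to be absolutely continuous, and the characterization in Theorem \ref{teonegativo} then identifies $m$ with the unique backward SRB measure $\mu^-$. Equating the forward formula $h_m(f)=\int\log J^uf\,dm$ with the inverse formula $h_m(f)=\log k-\int\log J^sf\,dm$ and using $Jf=J^sf\cdot J^uf$ produces the integral identity $\int\log Jf\,dm=\log k$.

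The main obstacle is upgrading this integral identity to the pointwise periodic identity, and this is exactly where a nontrivial stable direction is indispensable: for a purely expanding map the analogous statement is \emph{false}, since every expanding map carries a smooth invariant density while its periodic Jacobians need not equal $k^n$. To close the gap I would exploit that the \emph{same} smooth measure simultaneously has smooth unstable conditionals, governed by the backward cocycle $\Delta^u_f=\prod_{j\ge1}J^uf(x_{-j})/J^uf(y_{-j})$ of (\ref{conditionalU}), and smooth stable conditionals, governed by the forward cocycle $\Delta^s_f=\prod_{j\ge0}\big(Jf(f^jx)J^sf(f^jx)\big)/\big(Jf(f^jy)J^sf(f^jy)\big)$ of (\ref{conditionalS}). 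Reconciling the two absolutely continuous disintegrations of one and the same Lebesgue--equivalent measure across the global product structure should force the full Jacobian cocycle $\log Jf$ to be cohomologous to a constant, necessarily $\log k$ by the integral identity already obtained; a final Livsic evaluation around each periodic orbit then gives $Jf^n(p)=k^n$. I expect the delicate point to be precisely this reconciliation --- converting the over-determination coming from two--sided absolute continuity into a genuine coboundary for $\log Jf-\log k$ --- rather than any of the SRB identifications, which are standard.
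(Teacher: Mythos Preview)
Your $(\Leftarrow)$ is correct and matches the paper: Livsic gives $u\in C^1$ with $\log Jf-\log k=u\circ f-u$, and $m=e^{-u}\vol$ is the invariant form; the paper writes exactly this computation.

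For $(\Rightarrow)$ there is a genuine gap. You reach only the averaged identity $\int\log Jf\,dm=\log k$, and your proposed ``reconciliation of the two disintegrations'' is a hope, not an argument --- you say so yourself. The honest way to finish along your lines is equilibrium-state rigidity rather than a direct comparison of conditionals: the forward SRB is the unique equilibrium state for $-\log J^uf$ (pressure $0$), and the inverse SRB is the unique equilibrium state for $\log J^sf$ (pressure $\log k$); since both coincide with $m$, these H\"older potentials must differ by a coboundary plus the constant $\log k$, so $\log Jf-\log k=\log J^uf+\log J^sf-\log k$ is itself a coboundary, and Livsic evaluated on periodic orbits gives $Jf^n(p)=k^n$. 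None of this appears in your write-up.

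The paper's $(\Rightarrow)$ is completely different and purely local: for a fixed point $p$ (periodic points handled by passing to $f^n$) one takes the $k$ inverse branches $g_i:V\to V_i$ over a small $V\ni p$, argues each has Jacobian $<1/k$ when $Jf(p)>k$, and obtains $m(f^{-1}V)=\sum_i m(V_i)<m(V)$, contradicting invariance; the case $Jf(p)<k$ is symmetric. No SRB theory enters at all. A caveat worth recording: the paper asserts $g_i'(p)=[f'(p)]^{-1}$ for \emph{every} $i$, but this is literally true only for the branch with $g_i(p)=p$; for the remaining $k-1$ branches one has $g_i'(p)=[f'(g_i(p))]^{-1}$ with $g_i(p)\neq p$, and knowing $Jf>k$ near $p$ says nothing about $Jf$ near those other preimages. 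So the printed local argument is complete only when $k=1$; for $k\ge2$ it, too, has a gap, and your SRB route --- once properly closed as above --- is one legitimate way to fill it.
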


\begin{proof}
By simplicity consider the case of $f(p) = p.$ Let $V$ be a small neighborhood of $p$ and $V_1, \ldots, V_k$ mutually disjoint neighborhoods such that $f: V_i \rightarrow V$ is a diffeomorphism with inverse $g_i: V \rightarrow V_i.$ For each $i,$ applying the   chain's rule to $g_i \circ f$ we get $g_i'(p) = [f'(p)]^{-1},$ and thus $Jg(p) = \frac{1}{Jf(p)}.$ Suppose that $Jf(p) > k,$ so up to adjust $V,$ we can suppose that $Jf(x) > k$ for any $x \in V,$ it leads us to $Jg_i(z) <\frac{1}{k}$ for any $z \in V.$
So

$$m(V_i) = m(g_i(V)) = \int_{V} Jg_i(z) dm < \frac{1}{k} m(V), i =1, \ldots,k.$$
and it implies
$$ \sum_{i=1}^km(V_i) < m(V) \Leftrightarrow m(f^{-1} (V)) < m(V),$$
and thus $f$ does not preserve $m.$ Analogously $Jf(p)$ can not be bigger than $k.$ If $f^n(p) = p,$ for some $n > 1,$ then we apply the same idea replacing $f$ by $f^n$ with degree $k^n.$ Now $p$ is a fixed point of $f^n$ and everything as before works in this setting.

Now suppose that $Jf^n(p) = k^n,$ for any $p \in M$ such that $f^n(p) = p.$ By Livsic's Theorem there is a $C^1$ function $\phi:M \rightarrow \mathbb{R}$ such that

\begin{equation}\label{density}
\log(J f(x)) - \log(k) =  \phi( f(x)) - \phi(x).
\end{equation}

It leads us to  $$Jf(x) e^{-\phi(f(x))} = ke^{-\phi(x)}. $$

Define the measure $dm = e^{-\phi(x)}dw,$ where $w$ is a volume form defined on $M.$ Consider $V$ be a small nonempty open ball and $V_1, V_2, \ldots, V_k$ being mutually disjoint pre images, $f(V_i) = V.$

$$m(V) = \nu(f(V_i)) = \int_{f(V_i)} e^{-\phi(y)}dw  =  \int_{V_i} Jf(x) e^{-\phi(f(x))}dw = \int_{V_i} k e^{-\phi(x)}dw = k m(V_i) $$
$$m(V_i) = \frac{1}{k} m(V)$$

$$ m(V)  = \sum_{i = 1}^k m(V_i) =  m(f^{-1}(V) ).$$
We conclude that $f$ preserves the volume form $m.$ Particularly we just proved $(3) \Leftrightarrow (4)$ of Theorem \ref{teoplus}.

\end{proof}

Since the previous Lemma provide us $(3) \Leftrightarrow (4)$, the strategy  finish the proof will be $(1) \Rightarrow (2) \Rightarrow (3) \Rightarrow (1)$ and after $(4) \Leftrightarrow (5).$

Suppose that $f$ preserves $\mu \preceq m.$ Since $\mu \preceq m$ and $\mathcal{F}^s_f$ is an absolutely continuous foliations,
$$h_{\mu}(f) = \log(d) - \lambda^s_f = \log(d) - \lambda^s_A = h_m(A).$$
Note that, since also $h_{\widetilde{m}}(f) = h_m(A) = \log(d) - \lambda^s_f,$ we get $\widetilde{m} = \mu$ by uniqueness of inverse SRB of $f.$

By Pesin's formula too
$$h_{\widetilde{m}}(f) = \int_{\mathbb{T}^2} \lambda^u_f(x)d\widetilde{m}(x) = \lambda^u_f(\widetilde{m}).$$
Since $\widetilde{m}$ is ergodic and an absolutely continuous measure and  $\mathcal{F}_f^u$ is absolutely continuous foliation, then there is exist a leaf $\mathcal{F}^u_f$ such that intersects the set $Z,$ as in Theorem \ref{teo1}, in a positive measure of the leaf. So by Theorem \ref{teo1}, $f$ and $A$ are smoothly conjugated. We proved $(1)\Rightarrow (2).$

Now the sequence $(2) \Rightarrow (3) \Rightarrow (1) $ is straightforward, as well $(4) \Rightarrow (5).$ Since Lemma \ref{lema2} asserts $(3) \Leftrightarrow (4),$ we need just to prove $(5) \Rightarrow (4).$

Let $\phi: \mathbb{T}^2 \rightarrow \mathbb{R}$ be as in Lemma \ref{lema2}. By definition of degree

$$d = \int_{\mathbb{T}^2}f^{\ast} w =  \int_{\mathbb{T}^2} Jf(x) dw(x),$$ where $w$ is positive normalized volume form defined on $\mathbb{T}^2.$ Take $dw(x) = e^{-\phi(x)}dm(x),$ we can choose $\phi$ such that $w$ is normalized. Thus, as in Lemma \ref{lema2}

$$d =  \int_{\mathbb{T}^2}f^{\ast} w = \int_{\mathbb{T}^2} Jf(x) e^{-\phi(f(x))}dm(x) = \int_{\mathbb{T}^2} c e^{-\phi(x)}dm(x) = c , $$
we conclude $d = c$ and the Theorem \ref{teoplus} is proved.

\section{Proof of Theorem \ref{teo3} }

\begin{lemma}\label{lema1} Let $A: \mathbb{T}^3 \rightarrow \mathbb{T}^3$ a linear Anosov endomorphism as in Theorem \ref{teo3}. Then there is a $C^1$ neighborhood $\mathcal{U}$ of $A,$ such that every $f \in \mathcal{U}$ is Anosov with partially hyperbolic decomposition and moreover, if $f$ is special, then it is dynamically coherent with quasi-isometric $wu-$foliation.
\end{lemma}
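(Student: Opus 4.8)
The plan is to treat the two assertions separately. For the first, I would combine the $C^1$-openness of the Anosov condition with the robustness of dominated splittings. Passing to the universal cover, the lift $\overline{A}$ is a linear Anosov diffeomorphism of $\mathbb{R}^3$; since Anosov diffeomorphisms are $C^1$-open and a $C^1$-small perturbation of $f$ lifts to a $C^1$-small perturbation of $\overline{f}$, there is a neighborhood $\mathcal{U}$ of $A$ for which every $\overline{f}$ stays Anosov (equivalently, an invariant cone field persists), so every $f\in\mathcal{U}$ is an Anosov endomorphism. The hypothesis $\lambda^s_A<0<\lambda^{wu}_A<\lambda^{su}_A$ means $A$ carries a two-step dominated splitting $E^s_A\prec E^{wu}_A\prec E^{su}_A$ into three one-dimensional bundles. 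Because dominated splittings are robust and vary $C^0$-continuously with the map, after shrinking $\mathcal{U}$ every $f\in\mathcal{U}$ acquires a splitting $E^s_f\oplus E^{wu}_f\oplus E^{su}_f$, with bundles $C^0$-close to the constant linear ones. This is the claimed partially hyperbolic decomposition.

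For the second assertion I assume $f$ is special, so the intermediate bundles are well defined on $\mathbb{T}^3$, $E^u_f=E^{wu}_f\oplus E^{su}_f$, and the unstable foliation $\mathcal{F}^u_f$ exists and plays the role of the center-unstable foliation. Hence dynamical coherence reduces to the integrability of the center-stable bundle $E^{cs}_f=E^s_f\oplus E^{wu}_f$. I would first record that the dominating bundle $E^{su}_f$ is uniquely integrable to a strong unstable foliation $\mathcal{F}^{su}_f$ inside the leaves of $\mathcal{F}^u_f$, as the strongest bundle of a dominated splitting always integrates. Specialness is what lets me pass freely between $\mathbb{T}^3$, where the bundles live, and $\mathbb{R}^3$, where $\overline{f}$ is a genuine diffeomorphism and the diffeomorphism machinery applies.

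To integrate $E^{cs}_f$ I would lift to $\mathbb{R}^3$ and invoke the Burago--Ivanov construction of a branching foliation tangent to $E^{cs}_{\overline{f}}$, and then upgrade it to a genuine foliation. Here the geometry from the linear model does the work: since $E^{cs}_f$ lies in a thin cone around the constant plane $E^s_A\oplus E^{wu}_A$, every leaf of the lifted branching foliation is a uniform Lipschitz graph over that plane, transverse to the fixed direction $E^{su}_A$; together with the global product structure inherited from $A$, this excludes the merging of distinct leaves, so the branching foliation is a true foliation $\mathcal{F}^{cs}_f$. Dynamical coherence follows, and $\mathcal{F}^{wu}_f:=\mathcal{F}^{cs}_f\cap\mathcal{F}^u_f$ integrates $E^{wu}_f$. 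Quasi-isometry of $\mathcal{F}^{wu}_f$ is then immediate from the same cone estimate: in $\mathbb{R}^3$ each leaf is a $C^1$ curve whose tangent stays in a small cone around the fixed line $\mathbb{R}\cdot E^{wu}_A$, hence a Lipschitz graph over that line, which yields the inequality of Definition \ref{quasi isometric} with a uniform constant $Q$.

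The main obstacle is precisely the middle step, namely promoting the branching center-stable foliation to a genuine one, i.e. ruling out that two center-stable leaves merge. This is where the three distinct exponents (providing the two-step domination and the uniform transversality of $E^{su}_A$ to $E^{cs}_A$) and the irreducibility of $A$ over $\mathbb{Q}$ (preventing rational degeneracies in the product structure, in the spirit of the arguments of \cite{H}) are genuinely used; the persistence statements and the final quasi-isometry estimate are, by contrast, soft.
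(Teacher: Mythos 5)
Your first paragraph (robustness of the Anosov property and of the two-step dominated splitting) matches the paper. But for the heart of the lemma you take a genuinely different route, and the step you yourself identify as the main obstacle is exactly where your argument has a gap. The paper does not go through Burago--Ivanov branching foliations at all: it first shows that $\mathcal{F}^{su}_{\overline f}$ and $\mathcal{F}^{s}_{\overline f}$ are quasi-isometric in $\mathbb{R}^3$ (because $E^{su}_{\overline f}$, resp.\ $E^{s}_{\overline f}$, stays uniformly transverse to the fixed plane $E^s_A\oplus E^{wu}_A$, resp.\ $E^{wu}_A\oplus E^{su}_A$), and then invokes Brin's criterion \cite{Br}, which says that quasi-isometry of the strong foliations in the universal cover \emph{implies} dynamical coherence; the quasi-isometry of $\mathcal{F}^{wu}_{\overline f}$ then comes from the same transversality argument. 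Note the order of logic: quasi-isometry is the input and coherence the output, whereas in your proposal coherence is established first and quasi-isometry is read off afterwards. Your route could in principle be completed (it is essentially the Brin--Burago--Ivanov theorem for $\mathbb{T}^3$), but your justification for promoting the branching center-stable foliation to a genuine one does not work as stated: two distinct leaves that are uniform Lipschitz graphs over the plane $E^s_A\oplus E^{wu}_A$ can perfectly well merge along a half-leaf, and the ``global product structure inherited from $A$'' that you invoke to exclude this is a property of the (not yet constructed) genuine foliation of $f$, not something you can borrow from the linear model --- the argument is circular at that point. Ruling out merging requires a genuine dynamical argument, and the clean way to avoid it altogether is Brin's quasi-isometry criterion, which is what the paper uses.

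Two smaller points. The blanket claim that ``the strongest bundle of a dominated splitting always integrates'' is false in general; what integrates uniquely here is $E^{su}_{\overline f}$ as the strong unstable bundle of the partially hyperbolic diffeomorphism $\overline f$, i.e.\ uniform expansion plus domination is what is used, not domination alone. And the irreducibility of $A$ over $\mathbb{Q}$ plays no role in this lemma --- the paper's proof never uses it, and attributing the exclusion of merging to it points the argument in the wrong direction. The role of specialness is also slightly different from what you describe: the paper uses dynamical coherence of $\overline f$ to get a well-defined $\mathcal{F}^{cs}$ on $\mathbb{T}^3$, and then uses specialness only at the last step to define $\mathcal{F}^{wu}_f(x)=\mathcal{F}^{cs}_f(x)\cap W^u_f(x)$ unambiguously, since otherwise $W^u$ depends on the choice of past orbit.
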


\begin{proof}
It is known that $A$ is a partially hyperbolic endomorphism then its lift to $\mathbb{R}^3$ is a partially hyperbolic diffeomorphism.There is a $C^1$ neighborhood of $A$ such that every $C^1$ map  $f \in \mathcal{U}$ is an Anosov endomorphism with partially hyperbolic structure, meaning that for every $\tilde{x} = (x_n)_{n \in \mathbb{Z}} \in (\mathbb{T}^3)^f,$ holds the decomposition $T_{x_n} \mathbb{T}^3 = E^s_f \oplus E^{wu}_f \oplus E^{su}_f.$ For details of partially hyperbolic endomorphisms we refer \cite{micsa}.

If $f$ is enough $C^1$ close to $A,$ then $E^{s}_A \oplus E^{wu}_A $ is uniformly transversal to $E^{su}_{\overline{f}},$ so by Brin \cite{Br} the foliation $\mathcal{F}^{su}_{\overline{f}}$ is quasi isometric. In the same way $\mathcal{F}^{s}_{\overline{f}}$ is quasi isometric. Using again \cite{Br}, we conclude $\overline{f}$ is dynamicaly coherent and $\mathcal{F}^{wu}_{\overline{f}}$ is quasi isometric, since $E^s_A \oplus E^{su}_A$ is uniformly transversal to $E^{wu}_{\overline{f}}.$

Since $\overline{f}$ is dynamically coherent $\mathcal{F}^{cs}_f$ tangent to $E^{wu}_f \oplus E^{s}_f$ is uniquely defined on each $x \in \mathbb{T}^3.$ Of course, by \cite{micsa} the bundle $E^{wu}_f \oplus E^{s}_f$ is uniquely defined for each point $x \in \mathbb{T}^3.$ If there was a point $p$ admitting two different local tangent leaves $\mathcal{F}_{1,f}^{cs}(p)$ and $\mathcal{F}_{2,f}^{cs}(p).$  So by invariance of $E^{wu}_f \oplus E^{s}_f$ we could lift these local leaves to the same level to local leaves $\mathcal{F}_{1,\overline{f}}^{cs}(q)$ and $\mathcal{F}_{2,{\overline{f}}}^{cs}(q)$ which contradicts the dynamically coherence of $\overline{f}.$

Finally if $f$ is special the $\mathcal{F}^{wu}_f(x) = \mathcal{F}^{cs}_x \cap W^{u}_x$ is uniquely defined for each $x,$ where $W^u(x)$ is the unstable manifold tangent to $E^{wu}_f(x) \oplus E^{su}_f(x).$
\end{proof}

\begin{lemma}  Let $A: \mathbb{T}^3 \rightarrow \mathbb{T}^3$ a linear Anosov endomorphism as in Theorem \ref{teo3}. Given $\mathcal{U} $ as in Lemma \ref{lema1} and $f \in \mathcal{U},$ a special Anosov endomorphism, then the conjugacy $h$ such that $h \circ f = A \circ h$ is such that $h (\mathcal{F}^{wu}_f(x)) = \mathcal{F}^{wu}_A(h(x)).$
\end{lemma}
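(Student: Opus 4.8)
The plan is to lift everything to the universal cover and to exploit the decomposition $\mathcal{F}^{wu}_f(x) = \mathcal{F}^{cs}_f(x) \cap W^u_f(x)$ furnished by Lemma \ref{lema1}. Let $\overline{f}\colon \mathbb{R}^3 \to \mathbb{R}^3$ denote the lift of $f$, let $A$ also denote its linear lift, and fix a lift $H$ of $h$, so that $H \circ \overline{f} = A \circ H$ and $\|H - \mathrm{id}\|_\infty < \infty$ (as in the earlier bound $\|H^{-1}-\mathrm{id}\|_\infty < R$, valid since $h$ is homotopic to the identity). Since $H$ is a homeomorphism of $\mathbb{R}^3$ carrying $\overline{f}$-orbits to $A$-orbits, it suffices to prove the two equalities $H(\mathcal{F}^{cs}_{\overline{f}}(x)) = \mathcal{F}^{cs}_A(H(x))$ and $H(W^u_{\overline{f}}(x)) = W^u_A(H(x))$; intersecting these and projecting back to $\mathbb{T}^3$ then yields $h(\mathcal{F}^{wu}_f(x)) = \mathcal{F}^{wu}_A(h(x))$.

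For the full unstable foliation, recall that $f$ being special makes $W^u_f(x)$ independent of the chosen pre-orbit, so in the inverse-limit picture it is characterized purely by backward contraction, exactly as in items (1)--(2) of the list in Section \ref{section.preliminaries}. Because $H$ intertwines $\overline{f}$ and $A$, and the strong and weak unstable foliations of $\overline{f}$ are quasi-isometric by Lemma \ref{lema1}, the map $H$ preserves this backward-contraction characterization and therefore sends $W^u_{\overline{f}}$-leaves onto $W^u_A$-leaves. This is the standard fact that a conjugacy of Anosov systems respects the unstable foliation, transcribed to the special endomorphism through the lift.

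The core of the argument is the center-stable part, which I would obtain from a growth-rate characterization. Introduce the forward separation exponent
$$\chi^+(x,y) = \limsup_{n \to \infty} \frac{1}{n}\log \|\overline{f}^n(x) - \overline{f}^n(y)\|,$$
and the analogous $\chi^+_A$ for $A$. For the linear map, $z \in \mathcal{F}^{cs}_A(w)$ (tangent to $E^s_A \oplus E^{wu}_A$) precisely when $w - z$ has no $E^{su}_A$-component, which by the spectral decomposition is equivalent to $\chi^+_A(w,z) \le \lambda^{wu}_A$ rather than $\lambda^{su}_A$; here the gap $\lambda^{wu}_A < \lambda^{su}_A$ is what makes the two regimes distinguishable. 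On the $\overline{f}$ side, the quasi-isometry of $\mathcal{F}^{cs}_{\overline{f}}$ together with that of $\mathcal{F}^{su}_{\overline{f}}$ (both from Lemma \ref{lema1}) translate the Riemannian growth along and transverse to the center-stable leaves into Euclidean growth, giving $y \in \mathcal{F}^{cs}_{\overline{f}}(x) \iff \chi^+(x,y) \le \lambda^{wu}_A$. Finally, the $\mathbb{T}^3$ analog of Corollary \ref{nice}(1), proved verbatim by the fundamental-domain comparison $\|\overline{f}^n - A^n\|_0 < \infty$, shows that $\|\overline{f}^n(x) - \overline{f}^n(y)\|$ and $\|A^n(x)-A^n(y)\|$ are comparable for $\|x-y\|$ large, and since $\|H-\mathrm{id}\|_\infty<\infty$ one deduces $\chi^+(x,y) = \chi^+_A(H(x),H(y))$. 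Combining the three equivalences gives $y \in \mathcal{F}^{cs}_{\overline{f}}(x) \iff H(y) \in \mathcal{F}^{cs}_A(H(x))$, as required.

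The main obstacle I anticipate is exactly this center-stable characterization: one must ensure that the dichotomy $\lambda^{wu}_A$ versus $\lambda^{su}_A$ is genuinely reflected in the Euclidean separation $\|\overline{f}^n x - \overline{f}^n y\|$ and is not blurred by the bounded but nonzero defect $H - \mathrm{id}$ nor by the $O(1)$ quasi-isometry constants. This forces the comparison to be established first for points far apart on a leaf and then propagated to nearby points by iterating $\overline{f}$ to push small arcs up to large scale using uniform expansion, which is precisely where the quasi-isometry of both $\mathcal{F}^{cs}_{\overline{f}}$ and $\mathcal{F}^{su}_{\overline{f}}$ is indispensable.
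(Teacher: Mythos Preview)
Your argument is correct and rests on the same pillars as the paper's: quasi-isometry of the relevant foliations on the cover, the bound $\|H-\mathrm{id}\|_\infty<\infty$, and the resulting identification of exponential separation rates for $\overline{f}$ and $A$. The paper, however, does not pass through the decomposition $\mathcal{F}^{wu}_f=\mathcal{F}^{cs}_f\cap W^u_f$; it simply invokes that $\mathcal{F}^{wu}_{\overline{f}}$ itself is quasi-isometric and then quotes Gogolev's Proposition~1. In that direct argument one shows $H(\mathcal{F}^{wu}_{\overline{f}}(x))$ lies within bounded Hausdorff distance of the affine line $H(x)+E^{wu}_A$, and then forward iteration by $A$ kills the $E^{su}_A$-component while backward iteration (available since $\overline{f}$ is a diffeomorphism of $\mathbb{R}^3$) kills the $E^s_A$-component. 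Your two-factor route reproduces exactly these two eliminations: preservation of $W^u$ handles the $E^s_A$ part and the forward growth-rate characterization of $\mathcal{F}^{cs}$ handles the $E^{su}_A$ part. So the organization differs, but the content is the same; the direct Gogolev argument is a bit shorter because it avoids setting up the auxiliary $\mathcal{F}^{cs}$ characterization.

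One small inaccuracy: Lemma~\ref{lema1} states quasi-isometry of $\mathcal{F}^{s}_{\overline{f}}$, $\mathcal{F}^{su}_{\overline{f}}$ and $\mathcal{F}^{wu}_{\overline{f}}$, but not explicitly of $\mathcal{F}^{cs}_{\overline{f}}$. It does follow by the identical Brin argument (since $E^{su}_A$ is uniformly transverse to $E^{cs}_{\overline{f}}$), so you should say so rather than attribute it to the lemma. Also, in the step ``$y\in\mathcal{F}^{cs}_{\overline{f}}(x)\iff \chi^+(x,y)\le\lambda^{wu}_A$'' you are implicitly using that, for $f\in\mathcal{U}$, the top expansion rate along $E^{cs}_f$ is strictly below $e^{\lambda^{su}_A}$ and the minimal rate along $E^{su}_f$ strictly above $e^{\lambda^{wu}_A}$; combined with your observation $\chi^+(x,y)=\chi^+_A(H(x),H(y))\in\{\lambda^s_A,\lambda^{wu}_A,\lambda^{su}_A\}$ this gives the equivalence, but it is worth making this reliance on $C^1$-closeness explicit.
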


\begin{proof} Since $\mathcal{F}^{wu}_f$ is a quasi-isometric foliation, the proof here follows is analogous to the proof of Proposition 1 of \cite{G}.
\end{proof}

\begin{lemma}\label{lema3} Consider $A$ and $f$ as in Theorem $\ref{teo3}.$ Then there is a $C^1$ neighborhood $\mathcal{U}$ of $A$ such that for every $f \in \mathcal{U}$ a $C^r, r\geq 2,$ a special and $m-$preserving Anosov endomorphism holds the implication: if $\mathcal{F}^{wu}_f$ is absolutely continuous then $\lambda^{\sigma}_f(m) = \lambda^{\sigma}_A, \sigma \in \{s, wu, su\}.$
\end{lemma}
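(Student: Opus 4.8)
The plan is to determine the three exponents $\lambda^s_f(m),\lambda^{wu}_f(m),\lambda^{su}_f(m)$ from three independent relations and show each coincides with $\lambda^s_A,\lambda^{wu}_A,\lambda^{su}_A$. First I would fix the stable exponent. Since $f$ is special with $\dim E^s_f=1$, Theorem \ref{shi} yields $\lambda^s_f(p)=\lambda^s_A$ for every $p\in \mathrm{Per}(f)$; because the right-hand side is a constant, Livsic's theorem (used exactly as in the proof of Theorem \ref{teo2}, via \cite{micendo}) upgrades this to $\lambda^s_f(x)=\lambda^s_A$ for $m$-almost every $x$, so $\lambda^s_f(m)=\lambda^s_A$.

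Next I would produce the sum relation. The measure $m$ is ergodic, being the SRB measure of the transitive Anosov endomorphism $f$ (Corollary 1.1.2 of \cite{QZ}). As $m\ll m$, Pesin's formula (Theorem \ref{pesin1}) gives $h_m(f)=\lambda^{wu}_f(m)+\lambda^{su}_f(m)$, while absolute continuity of $\mathcal{F}^s_f$ makes $m$ the inverse SRB measure, so that the inverse Pesin formula (Theorem \ref{teonegativo}) reads $h_m(f)=\log d-\lambda^s_f(m)$, precisely as in the $(1)\Rightarrow(2)$ step of Theorem \ref{teoplus}. Eliminating $h_m(f)$ and inserting $\lambda^s_f(m)=\lambda^s_A$ gives $\lambda^{wu}_f(m)+\lambda^{su}_f(m)=\log d-\lambda^s_A=\lambda^{wu}_A+\lambda^{su}_A$.

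The heart of the argument is then a pair of upper bounds obtained by the quasi-isometry method. By ergodicity the set $Z^{wu}=\{x:\lambda^{wu}_f(x)=\lambda^{wu}_f(m)\}$ has full $m$-measure, and absolute continuity of $\mathcal{F}^{wu}_f$ transfers this to full arc-length measure on a typical weak unstable leaf, so some leaf meets $Z^{wu}$ in positive measure. Assuming $\lambda^{wu}_f(m)>\lambda^{wu}_A$ then leads to a contradiction in the universal cover: the quasi-isometry of $\mathcal{F}^{wu}_{\overline f}$ (Lemma \ref{lema1}) together with the linear comparison for the one-dimensional bundle $E^{wu}_A$ bounds the growth of $\|\overline f^{\,n}x-\overline f^{\,n}y\|$ along such a leaf from above by $(1+\varepsilon)^{2n}e^{n\lambda^{wu}_A}\|x-y\|$, while integrating the faster-than-linear expansion over the positive-measure piece of $Z^{wu}$ forces growth at least $\alpha_0 Q^2(1+2\varepsilon)^{2n}e^{n\lambda^{wu}_A}\|x-y\|$, reproducing verbatim the contradiction of the lemma preceding Lemma \ref{lemasmooth}. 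Hence $\lambda^{wu}_f(m)\le\lambda^{wu}_A$. The identical argument applies in the strong unstable direction: $\mathcal{F}^{su}_f$ is the extremal (fastest expanding) sub-foliation of the unstable bundle, hence automatically absolutely continuous by the standard $C^2$ theory—only the center foliation $\mathcal{F}^{wu}_f$ may fail this—and its leaves are quasi-isometric (Lemma \ref{lema1}), so the same comparison yields $\lambda^{su}_f(m)\le\lambda^{su}_A$.

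Finally I would combine: adding the two upper bounds and comparing with the sum relation of the second step forces both inequalities to be equalities, $\lambda^{wu}_f(m)=\lambda^{wu}_A$ and $\lambda^{su}_f(m)=\lambda^{su}_A$, and together with the first step all three exponents match. The main obstacle is the third step, namely extending Corollary \ref{nice} and Lemma \ref{linalg} from the full stable/unstable foliations to the one-dimensional intermediate foliations $\mathcal{F}^{wu}_{\overline f}$ and $\mathcal{F}^{su}_{\overline f}$: one must check that their leaves asymptotically align with $E^{wu}_A$ and $E^{su}_A$ and expand by the corresponding linear rate in one step, which I expect to follow from their quasi-isometry and the spectral gaps $\lambda^s_A<\lambda^{wu}_A<\lambda^{su}_A$ via the $C^1$-closeness of $\overline f$ to $A$, exactly in the spirit of the proof of Corollary \ref{nice}.
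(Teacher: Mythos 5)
Your proposal follows essentially the same route as the paper's proof: the sum relation $\lambda^{wu}_f(m)+\lambda^{su}_f(m)=\log d-\lambda^s_A=\lambda^{wu}_A+\lambda^{su}_A$ from the Pesin and inverse Pesin formulas, the two upper bounds $\lambda^{wu}_f(m)\le\lambda^{wu}_A$ and $\lambda^{su}_f(m)\le\lambda^{su}_A$ via the quasi-isometry/growth-comparison argument of Theorem~\ref{teo1} (using absolute continuity of $\mathcal{F}^{wu}_f$ and the automatic absolute continuity of $\mathcal{F}^{su}_f$ for $C^2$ maps), and the stable exponent from Theorem~\ref{shi}. The technical point you flag at the end---adapting Corollary~\ref{nice} and Lemma~\ref{linalg} to the one-dimensional intermediate bundles---is exactly what the paper glosses over with ``the same geometric strategy,'' so your writeup is, if anything, more explicit about where the work lies.
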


\begin{proof} Evidently we take $\mathcal{U} $ as in Lemma \ref{lema1}. Consider $\overline{f}: \mathbb{R}^3  \rightarrow  \mathbb{R}^3 $ the lift of $f.$ Assume that $\mathcal{F}^{wu}_f$ is absolutely continuous. Since $\mathcal{F}^{wu}_f$ is also quasi isometric, then using the same geometric strategy to prove Theorem \ref{teo1}  we can show $\lambda^{wu}_f(m) \leq \lambda^{wu}_A.$ Since $f$ is at least $C^2,$ the foliation $\mathcal{F}^{su}_{\overline{f}}$ is also absolutely continuous and quasi isometric, and thus  $\lambda^{su}_f(m) \leq \lambda^{su}_A,$ following the techniques in the proof Theorem \ref{teo1}.

Since $f$ is $m-$presering, by the Pesin formula, Theorem \ref{shi} and Lemma \ref{lema2}

$$h_m(f) = \lambda^{wu}_f(m) + \lambda^{su}_f(m) = \log(k) - \lambda^s_f(m) = \log(k) - \lambda^s_A = \lambda^{wu}_A + \lambda^{su}_A ,$$ where $k > 1$ is the degree of $f$ and $A.$

Since $\lambda^{wu}_f(m) + \lambda^{su}_f(m) =  \lambda^{wu}_A + \lambda^{su}_A,$ by the above inequalities we conclude $\lambda^{\sigma}_f(m) = \lambda^{\sigma}_A, \sigma \in \{wu, su\}.$ The identity of stable Lyapunov exponents between $f$ and $A$ comes from Theorem \ref{shi}.
\end{proof}

\begin{lemma}\label{lema4} Let $\mathcal{U} $ be as above and $f \in \mathcal{U}$ a $C^r, r \geq 2, $  special Anosov endomorphism such that $\mathcal{F}^{wu}_f$ is absolutely continuous. Then the conjugacy $h$ is at least $C^1$ on $wu-$manifolds, particularly $\lambda^{wu}_f(p) = \lambda^{wu}_A, p \in Per(f).$
\end{lemma}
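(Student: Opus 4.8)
The plan is to transfer the argument of Lemma \ref{lemasmooth} from the full unstable foliation to the one-dimensional foliation $\mathcal{F}^{wu}_f$. First I would fix the measure-theoretic setup. Since $f$ preserves $m$ and $\mathcal{F}^s_f$ is absolutely continuous, $m$ realizes the inverse Pesin formula \eqref{PesinS}; hence, by the uniqueness statement in Theorem \ref{teonegativo}, $m$ is the inverse SRB measure of $f$ and therefore coincides with $\widetilde{m}$. Consequently $h_{\ast}m=m$, so $h$ intertwines $(f,m)$ with $(A,m)$. Recall from the lemma preceding Lemma \ref{lema3} that $h$ carries each weak unstable leaf $\mathcal{F}^{wu}_f(x)$ onto $\mathcal{F}^{wu}_A(h(x))$, and from Lemma \ref{lema3} that the $m$-typical weak unstable exponents already agree. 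What remains is to promote this to leafwise $C^1$ regularity of $h$.

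Then I would introduce the conditional densities along $wu$-leaves. Absolute continuity of $\mathcal{F}^{wu}_f$ lets me disintegrate $m$ into conditional measures equivalent to Riemannian length on the leaves; as in \eqref{conditionalU} and in the weak unstable version of the preliminary regularity lemma (Corollary 4.4 of \cite{Llave92}), their density is given by the backward weak unstable Jacobian cocycle
\[
\rho^{wu}_f(\tilde{y})=\frac{\Delta^{wu}_f(\tilde{x},\tilde{y})}{L^{wu}(\tilde{x})},\qquad \Delta^{wu}_f(\tilde{x},\tilde{y})=\prod_{k=1}^{\infty}\frac{J^{wu}f(x_{-k})}{J^{wu}f(y_{-k})},
\]
with $J^{wu}f=\|Df|_{E^{wu}_f}\|$. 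Because the backward dynamics contracts $E^{wu}_f$ and $f$ is $C^{r}$, this product converges in the $C^{r-1}$ topology along each leaf, so $\rho^{wu}_f$ is a strictly positive $C^1$ function; for the linear map $A$ the weak unstable leaves are parallel and $\rho^{wu}_A$ is constant. Since $h$ intertwines $(f,m)$ and $(A,m)$ and sends $wu$-leaves to $wu$-leaves, its restriction to a leaf is absolutely continuous and pushes the normalized conditional of $f$ onto that of $A$; as in \eqref{ODE} it therefore solves
\[
x'=\frac{\rho^{wu}_f(t)}{\rho^{wu}_A(x)},\qquad x(x_0)=h(x_0),
\]
whose right-hand side is $C^{r-1}$ and strictly positive, so $h$ is $C^1$ (indeed $C^{r}$) with positive derivative on that leaf. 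Finally I would globalize exactly as in Lemma \ref{lemasmooth}: choose a forward-dense $m$-generic orbit, invoke the $C^1$-convergence of weak unstable arcs (Theorem 1.12 of \cite{PRZ}), and run an Arzel\`a--Ascoli argument to conclude that $h$ is $C^1$ on every leaf of $\mathcal{F}^{wu}_f$.

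To read off the periodic data, I would fix $p$ with $f^{n}(p)=p$, note that $h(p)$ is $A$-periodic of the same period, and differentiate $h\circ f^{n}=A^{n}\circ h$ along the one-dimensional leaf through $p$. Since the leafwise derivative of $h$ is nonzero (positive, by strict positivity of both densities), the weak unstable multiplier of $f^{n}$ at $p$ must equal that of $A^{n}$, which gives $\lambda^{wu}_f(p)=\lambda^{wu}_A$ for every $p\in Per(f)$.

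The hard part will be the regularity of $\rho^{wu}_f$: the SRB theory of the preliminaries yields $C^{r-1}$ conditional densities for the \emph{full} unstable foliation, and the real work is to obtain the same regularity along the one-dimensional weak unstable subfoliation. This is precisely where absolute continuity of $\mathcal{F}^{wu}_f$ is used, together with the domination of $E^{wu}_f$ by $E^{su}_f$ and the $C^{r}$ smoothness of the backward dynamics, so that the cocycle $\Delta^{wu}_f$ converges in $C^{r-1}$ along leaves; the strict positivity of the limiting density is what secures the nonvanishing leafwise derivative needed in the final differentiation step.
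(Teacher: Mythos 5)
Your proposal is correct and ends in the same place as the paper --- the leafwise O.D.E.\ of Lemma \ref{lemasmooth} applied along $wu$-leaves, an Arzel\`a--Ascoli globalization, and then differentiation of $h\circ f^n=A^n\circ h$ at periodic points --- but it reaches the key intermediate fact by a different route. The paper does not simply assert that absolute continuity of $\mathcal{F}^{wu}_f$ yields conditional densities of the form $\Delta^{wu}_f/L^{wu}$; instead it builds an increasing partition subordinate to the $wu$-subfoliation by intersecting an unstable subordinate partition with $p^{-1}(\mathcal{F}^{wu}_f)$, invokes the SRB machinery of \cite{QZ} and the conditional-entropy lemma of \cite{LQ} to define leafwise entropies $h_m(f,\mathcal{F}^{wu}_f)$, and re-derives $\lambda^{wu}_f(m)=\lambda^{wu}_A$ from a Pesin-type formula for the subfoliation before writing down the O.D.E. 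You instead take the exponent identity from Lemma \ref{lema3} and derive the density formula directly from the absolute-continuity hypothesis plus $f$-invariance of $m$. That is a legitimate and arguably more economical path, but it is also where you are quickest: equivalence of the conditionals to leaf Lebesgue does not by itself identify the density as the normalized cocycle $\Delta^{wu}_f$; you need the standard argument combining invariance of the disintegration under $f$, uniqueness of Rokhlin disintegrations, and backward contraction along $wu$-leaves (with the inverse-limit bookkeeping, since $\Delta^{wu}_f(\tilde x,\tilde y)$ depends on the chosen pasts). That is precisely the content the paper's subordinate-partition construction supplies, so you should either run that argument explicitly or cite the subfoliation SRB framework as the paper does. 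Your closing observation --- that the real work is the $C^{r-1}$ convergence of the cocycle along the one-dimensional subfoliation, using domination and smoothness of leaves --- is accurate and matches the role of the de la Llave regularity lemma in the preliminaries.
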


\begin{proof} In this case it know that $h(\mathcal{F}^{wu}_f(x)) = \mathcal{F}^{wu}_A(h(x)).$ Since $m$ is the uniquely SRB for $A$ and $f$ respectively, then $h_{\ast}(m) = h^{-1}_{\ast}(m) = m.$

Let $\eta $ an increasing subordinate to unstable manifold $W^u_f.$ We can consider $\eta^{wu}_f$ such that $\eta^{wu}_f(\tilde{x}) = \eta(\tilde{x})\cap p^{-1}(\mathcal{F}^{wu}_f(p(\tilde{x}))).$ Since the $wu-$leaves are expanding for $f$ and submanifolds of $W^u,$ then the partition $\eta^{wu}_f$ is an increasing subordinate to $\mathcal{F}^{wu}_f$ partition, the scenery is done. Now the theory of SRB of \cite{QZ} can be applied to $\mathcal{F}^{wu}_f.$ Denote by $\eta^{wu}_A = \tilde{h}(\eta^{wu}_f),$ where $\tilde{h}$ is the induced by $h$ in the level of limit inverse spaces. By the increasing property of subordinate partitions, the values of conditional entropies
$H_{\hat{m}}(\eta^{wu}_A | \tilde{A}(\eta^{wu}_A) ), H_{\hat{m}}( \eta^{wu}_f |\tilde{f}(\eta^{wu}_f)) $ are independent of the chosen increasing subordinated to $wu-$foliation partition, see for instance Lemma 5.3, Chapter VI of \cite{LQ}. These numbers we call respectively by  $ h_{m}(A,\mathcal{ F}^{wu}_A)$ and $h_{m}(f,\mathcal{F}^{wu}_f).$

Again, since $h_{\ast}(m) = m , h(\mathcal{F}^{wu}_f(x)) = \mathcal{F}^{wu}_A(h(x)),$ by absolute continuity and the Pesin's entropy formula applied to $wu-$foliation, we obtain
$$ \lambda^{wu}_A = h_{m}(A,\mathcal{ F}^{wu}_A) = h_{m}(f,\mathcal{ F}^{wu}_f) = \lambda^{wu}_f(m). $$

As in Lemma \ref{lemasmooth} and using absolute continuity of $\mathcal{F}^{wu}_f,$ we can find $h$ on $wu-$leaves by solving analogous ordinary differential equations

$$x' = \frac{\Delta^{wu}_f( y_0 , t)}{\Delta^{wu}_A( h(y_0), x)}, x(y_0) =  h(y_0), $$
where $\Delta_f^{wu}$ has an  analogous formulation to $\Delta_f^{u},$ considering the jacobian $Jf^{wu}.$

We conclude that $h$ is $C^{1+\alpha},$ for some $\alpha > 0$ restricted to $wu-$leaves, particularly $\lambda^{wu}_f(p) = \lambda^{wu}_A, p \in Per(f).$

\end{proof}

\begin{lemma}\label{lema5} Let $\mathcal{U} $ be as above and $f \in \mathcal{U}$ a $C^r, r \geq 2, $ a $m-$preserving special Anosov endomorphism such that $\lambda^{wu}_f(p) = \lambda^{wu}_A, p \in Per(f),$ then $\mathcal{F}^{wu}_f$ is absolutely continuous.
\end{lemma}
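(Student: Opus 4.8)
The plan is to reverse the chain of equalities established in Lemma \ref{lema4}: there absolute continuity produced the matching of the weak--unstable Lyapunov exponents, while here the matching of exponents at periodic points will be promoted to absolute continuity through the Pesin entropy formula for the $wu$--sub--foliation. As in Lemma \ref{lema3} we take $\mathcal{U}$ as in Lemma \ref{lema1}, so that $\mathcal{F}^{wu}_f$ is a well defined, expanding, quasi--isometric foliation whose leaves are $C^1$ submanifolds of the two dimensional unstable leaves, and we let $\tilde{h}$ denote the conjugacy induced by $h$ on the inverse limit, which carries $\mathcal{F}^{wu}_f$ onto $\mathcal{F}^{wu}_A$.

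First I would upgrade the periodic hypothesis to a statement about the invariant volume. Since $f$ is special, $E^{wu}_f$ is a genuine bundle over $\mathbb{T}^3$ and $\log J^{wu}f$ is a well defined H\"older function; as $f$ is a transitive Anosov endomorphism and, by hypothesis, $\log J^{wu}f-\lambda^{wu}_A$ has vanishing sum over every periodic orbit, Livsic's Theorem exhibits it as a continuous coboundary. Integrating against the invariant volume then gives $\lambda^{wu}_f(m)=\int\log J^{wu}f\,dm=\lambda^{wu}_A$ (and, with the identity of Lemma \ref{lema3}, also $\lambda^{su}_f(m)=\lambda^{su}_A$).

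The key step is to evaluate the partial entropy along $\mathcal{F}^{wu}_f$ \emph{without} assuming absolute continuity, and for this I first claim $h_\ast m = m$. Using the inverse Pesin formula, $h_m(f)=\log k-\lambda^s_f(m)$, and since $f$ is special with $\dim E^s_f=1$, Theorem \ref{shi} yields $\lambda^s_f(m)=\lambda^s_A$; as $\log k=\lambda^s_A+\lambda^{wu}_A+\lambda^{su}_A$ we obtain $h_m(f)=\lambda^{wu}_A+\lambda^{su}_A=h_{top}(A)$, so $m$ realizes the topological entropy of $f$. Hence $h_\ast m$ is a measure of maximal entropy for $A$, and by uniqueness of the maximal measure of the irreducible linear endomorphism $A$ it equals the Haar measure, so $h_\ast m=m$. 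Because $\tilde{h}$ maps an increasing partition $\eta^{wu}_f$ subordinate to $\mathcal{F}^{wu}_f$ onto the increasing subordinate partition $\eta^{wu}_A=\tilde{h}(\eta^{wu}_f)$ and pushes $\hat m$ to $\widehat{h_\ast m}=\hat m$, the conditional entropies defining the partial entropies are preserved by this measurable conjugacy, whence
$$h_m(f,\mathcal{F}^{wu}_f)=h_{m}(A,\mathcal{F}^{wu}_A)=\lambda^{wu}_A=\lambda^{wu}_f(m),$$
the middle equality being the Pesin formula along the linear, hence absolutely continuous, foliation $\mathcal{F}^{wu}_A$ for the volume.

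Finally, having verified that the Pesin entropy formula holds along $\mathcal{F}^{wu}_f$, I would invoke the SRB characterization for expanding sub--foliations (the $wu$--analogue of Theorem \ref{pesin2}, developed in \cite{QZ} and Chapter VI of \cite{LQ}, which was already set up for $\mathcal{F}^{wu}_f$ in Lemma \ref{lema4}): the equality $h_m(f,\mathcal{F}^{wu}_f)=\int\log J^{wu}f\,dm$ forces the conditional measures of $m$ along the $wu$--leaves to be absolutely continuous with respect to leafwise Lebesgue measure, with densities governed by the $\Delta^{wu}_f$ products. Since $m$ is the volume, this disintegration is exactly the Fubini decomposition required by the definition, so $\mathcal{F}^{wu}_f$ is absolutely continuous. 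The main obstacle is the middle paragraph: justifying that the partial entropy is a measurable conjugacy invariant (so that the merely topological $h$ transports it) together with $h_\ast m=m$, and securing the genuine ``if and only if'' in the sub--foliation Pesin formula, so that equality of partial entropy returns absolute continuity of the conditionals and not merely their full dimension.
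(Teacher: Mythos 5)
Your route diverges from the paper's at the decisive step, and that step is where the gap lies. The preparatory material is fine and partly overlaps with the paper: the Livsic upgrade of the periodic data to $\lambda^{wu}_f(m)=\lambda^{wu}_A$, and the identity $h_\ast m=m$ (the paper gets it by noting $h_{\nu}(A)=h_m(f)=\lambda^{wu}_A+\lambda^{su}_A$ forces $\nu=h_\ast m$ to be the SRB measure of $A$; your variant via uniqueness of the measure of maximal entropy is equally valid). The problem is your final step: you want to conclude absolute continuity of the conditionals of $m$ along $\mathcal{F}^{wu}_f$ from the equality $h_m(f,\mathcal{F}^{wu}_f)=\int\log J^{wu}f\,dm$. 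That ``equality in the partial entropy formula implies absolutely continuous leafwise conditionals'' is precisely the hard converse direction, and it is not what \cite{QZ} or Chapter VI of \cite{LQ} provide: Theorem \ref{pesin2} is an iff only for the \emph{full} unstable foliation. For the intermediate foliation the situation is worse than an unreferenced citation, because $\mathcal{F}^{wu}_f$ (tangent to the \emph{weakest} expanding direction) is not a member of the Ledrappier--Young flag $W^1\subset W^2\subset\cdots$, which is ordered from strongest to weakest; so even the a priori inequality $h_m(f,\mathcal{F}^{wu}_f)\le\lambda^{wu}_f(m)$, let alone the rigidity statement at equality, would have to be established for endomorphisms. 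You flag this yourself as the main obstacle, and it is: as written the argument invokes a theorem that is not in the cited literature. (The remaining passage from ``a.c.\ conditionals with positive $\Delta^{wu}$-densities'' to the Fubini-type definition of absolute continuity is fine.)

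The paper avoids this machinery entirely. From $h(\mathcal{F}^{wu}_f(x))=\mathcal{F}^{wu}_A(h(x))$ together with the periodic data $\lambda^{wu}_f(p)=\lambda^{wu}_A$ it first deduces, by the one-dimensional conformal-metric argument of Theorem B of \cite{micendo}, that $h$ is $C^1$ \emph{restricted to each $wu$-leaf}. Combined with $h_\ast m=m$, absolute continuity of $\mathcal{F}^{wu}_f$ is then transported directly from the linear (hence absolutely continuous) foliation $\mathcal{F}^{wu}_A$: if a null set $Z$ met the $wu_f$-plaques through a positive-measure set of points in positive leaf measure, then $h(Z)$ would be a null set meeting the corresponding $wu_A$-plaques in positive leaf measure (leafwise $C^1$ maps preserve leafwise null sets), contradicting absolute continuity of $\mathcal{F}^{wu}_A$; the reverse implication in the Fubini definition is handled the same way. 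If you want to salvage your entropy approach you would need to prove the sub-foliation Ledrappier--Young characterization for endomorphisms; otherwise the leafwise-regularity route is the one that closes the argument with the tools already on the table.
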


\begin{proof}

Since $h(\mathcal{\mathcal{F}}^{wu}_f(x)) = \mathcal{F}^{wu}_A(h(x))$ and $\lambda^{wu}_f(p) = \lambda^{wu}_A, p \in Per(f),$ we conclude that $h$ is $C^1$ restricted to $wu-$leaves, see the proof of Theorem B of \cite{micendo}, by using conformal metrics. Particularly, by Theorem \ref{shi} and Lemma \ref{lema2}, $\lambda^{su}_f(p) = \lambda^{su}_A, p \in Per(f),$ particularly by using Livsic's Theorem $\lambda^{\ast}_f(x) = \lambda^{\ast}_A, \ast \in \{wu, su\}$ for any $x \in \mathbb{T}^3.$ We can conclude that $h_m(f) = \lambda^{wu}_A + \lambda^{su}_A.$ We note that if $\nu = h_{\ast}(m),$ then $h_{\nu}(A) = h_m(f) = \lambda^{wu}_A + \lambda^{su}_A,$ and then $\nu$ is a SRB measure of $A.$ By uniqueness of SRB, $\nu = m,$ and $m = h_{\ast}(m) = h^{-1}_{\ast}(m).$

Now suppose by contradiction that locally there is a set $Z,$ such that $m(Z) = 0,$ meeting the local component $\mathcal{F}^{wu}_f(x)$ of $x$ in a set of positive Lebesgue measure of this component, for every $x \in P,$ for a Borel set $P$ such that  $m(P) > 0.$ Now taking $Z' = h(Z), P' = h(P),$ since $h_{\ast}(m) = m,$ it leads us to $m(Z') = 0, m(P') > 0,$ moreover given $y \in P', y = h(x), x \in P.$ So each local component $\mathcal{F}^{wu}_A(y)$ containing $y \in P'$ meets $Z'$ in a positive Lebesgue measure set, because $h $ is $C^1$ on $wu-$leaves. But it contradicts the absolute continuity of the foliation $\mathcal{F}^{wu}_A.$ Analogously, if we suppose inside an open set $U,$ the existence of a set $P \subset U$ such that $m(U\setminus P) = 0,$ and for any $p \in P$ the component $\mathcal{F}^{wu}_f(p)$  meets $Z$ in a zero Lebesgue measure of the leaf, then we can conclude $m(Z) = 0.$

The proof of Theorem \ref{teo3} is now completed.

\end{proof}

Now considering Lemma \ref{lema2}, Theorem \ref{teo3} and taking account Theorem \ref{shi}, we get straightforwardly.

\begin{cor}\label{PD} Let $A:\mathbb{T}^3 \rightarrow \mathbb{T}^3 $ be a linear Anosov endomorphism which is irreducible over $\mathbb{Q}$ and it has three Lyapunov exponents $\lambda^s_A < 0  < \lambda^{wu}_A < \lambda^{su}_A.$ Then there is a $C^1$ neighborhood $\mathcal{U}$ of $A$ such that for a given $f \in \mathcal{U}$ being a $C^r, r\geq 2,$  special and $m-$preserving Anosov endomorphism the following are equivalent.
\begin{enumerate}
\item $\mathcal{F}^{wu}_f$ is absolutely continuous.
\item $\lambda^{wu}_f(p) = \lambda^{wu}_A,$ for any $p \in Per(f).$
\item $\lambda^{su}_f(p) = \lambda^{su}_A,$ for any $p \in Per(f).$
\item $\lambda^{\sigma}_f(p) = \lambda^{\sigma}_A, \sigma \in \{s,wu,su\},$ for any $p \in Per(f).$
\end{enumerate}

\end{cor}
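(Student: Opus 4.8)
The plan is to bootstrap the equivalence already established in Theorem \ref{teo3} by exploiting the single arithmetic constraint that volume preservation imposes on periodic data. First I would record that the equivalence $(1) \Leftrightarrow (2)$ is nothing but the content of Theorem \ref{teo3}, furnished by Lemmas \ref{lema4} and \ref{lema5}; no new argument is needed there. Consequently the whole task reduces to relating $\lambda^{wu}$, $\lambda^{su}$ and the stable exponent to one another, and this is where the hypotheses that $f$ is special and $m$-preserving enter.

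The central step is a periodic-point identity. Since $f$ is $m$-preserving and has degree $k$, Lemma \ref{lema2} gives $Jf^n(p) = k^n$ for every $p$ with $f^n(p) = p$. Because the three invariant sub-bundles $E^s_f, E^{wu}_f, E^{su}_f$ are one-dimensional and $Df^n(p)$ is diagonal with respect to the induced splitting at the periodic point $p$, the full Jacobian factors as the product of the three expansion factors; taking logarithms and dividing by the period yields
$$
\lambda^s_f(p) + \lambda^{wu}_f(p) + \lambda^{su}_f(p) = \log k, \qquad p \in Per(f).
$$
The analogous identity $\lambda^s_A + \lambda^{wu}_A + \lambda^{su}_A = \log k$ holds for the linearization, which preserves $m$ with constant Jacobian $k$.

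Next I would invoke specialness. As $\dim E^s_f = 1$ and $f$ is special, Theorem \ref{shi} gives $\lambda^s_f(p) = \lambda^s_A$ for every $p \in Per(f)$. Subtracting the two displayed identities and cancelling the common stable term gives
$$
\lambda^{wu}_f(p) + \lambda^{su}_f(p) = \lambda^{wu}_A + \lambda^{su}_A, \qquad p \in Per(f).
$$
Thus the sum of the two positive exponents is pinned to its linear value along every periodic orbit, so matching one of them forces matching the other; this is exactly $(2) \Leftrightarrow (3)$. Finally, $(4)$ is the conjunction of $(2)$, $(3)$ and the stable matching $\lambda^s_f(p) = \lambda^s_A$, the last being automatic from specialness; hence $(4)$ is equivalent to $(2)$ and $(3)$ holding together, while $(4)$ trivially implies each of them. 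Chaining $(1) \Leftrightarrow (2) \Leftrightarrow (3)$ with $(2)\wedge(3) \Leftrightarrow (4)$ closes the equivalence.

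I expect the only delicate point to be the factorization of $Jf^n(p)$ into the three exponential factors. This rests on the one-dimensionality of each sub-bundle, which makes $Df^n(p)$ diagonalizable by the invariant splitting so that angle distortions contribute nothing to the determinant; the remaining steps are pure bookkeeping with the two linear relations among the exponents together with Theorems \ref{teo3} and \ref{shi}.
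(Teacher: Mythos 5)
Your argument is correct and follows exactly the route the paper intends: the paper derives the corollary "straightforwardly" from Lemma \ref{lema2}, Theorem \ref{teo3} and Theorem \ref{shi}, and your write-up simply makes explicit the bookkeeping — the periodic Jacobian identity $\lambda^s_f(p)+\lambda^{wu}_f(p)+\lambda^{su}_f(p)=\log k$ from volume preservation, the stable rigidity $\lambda^s_f(p)=\lambda^s_A$ from specialness, and the resulting equivalence of (2) and (3). The cancellation of angle terms in $Jf^n(p)$ at a periodic point, which you flag as the delicate step, is indeed fine since source and image splittings coincide there.
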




\end{document}